\newtheorem{theorem}{Theorem}[section]
\newtheorem{coro}[theorem]{Corollary}
\newtheorem{defi}[theorem]{Definition}
\newtheorem{lemma}[theorem]{Lemma}
\newtheorem{prop}[theorem]{Proposition}
\newtheorem{rem}[theorem]{Remark}
\def\slfrac#1#2{\hbox{\kern.1em %
 \raise.5ex\hbox{\the\scriptfont0 #1}\kern-.11em %
 /\kern-.15em\lower.25ex\hbox{\the\scriptfont0 #2}}}
\newcommand{\eqn}[1]{(\ref{#1})}
\newcommand{\eeq}{\end{equation}}
\newcommand{\beql}[1]{\begin{equation}\label{#1}}
\newcommand{\ggG}{{\gamma}}
\newcommand{\pt}{\partial}
\newcommand{\ep}{\epsilon}
\newcommand{\CC}{{\mathbb C}}
\newcommand{\RR}{{\mathbb R}}
\newcommand{\ZZ}{{\mathbb Z}}
\newcommand{\DLp}{{D_L^{+}}}
\newcommand{\DLm} {{D_L^{-}}}
\newcommand{\sA}{{\mathcal A}}
\newcommand{\sB}{{\mathcal B}}
\newcommand{\sD}{{\mathcal D}}
\newcommand{\sE}{{\mathcal E}}
\newcommand{\sH}{{\mathcal H}}
\newcommand{\sK}{{\mathcal K}}
\newcommand{\sP}{{\mathcal P}}
\newcommand{\sS}{{\mathcal S}}
\newcommand{\hD}{{\rm D}}
\newcommand{\hI}{{\rm I}}
\newcommand{\hJ}{{\rm J}}
\newcommand{\hR}{{\rm R}}
\newcommand{\hS}{{\rm S}}
\newcommand{\hT}{{\rm T}}
\newcommand{\hZ}{{\rm Z}}
\newcommand{\rC}{{\rm C}}
\newcommand{\sgn} {{\rm sgn}}
\newcommand{\Bx}{{\Box}}
\newcommand{\Bxc}{{\Box^{\circ}}}
\title{The Lerch Zeta Function  IV.  Hecke Operators}
\subjclass[2000]{Primary: 11M35}
\keywords{functional equation, Hurwitz zeta function, Lerch zeta function}
\author{Jeffrey C. Lagarias}
\thanks{The research of the first author was supported by NSF grants
DMS-0801029, DMS-1101373 and DMS-1401224,  that of the second author by NSF grant DMS-1101368  
and Simons Foundation grant \#355798.}
\address{Department of Mathematics, University of Michigan,
Ann Arbor, MI 48109-1043,USA}
\email{lagarias@umich.edu}
\author{Wen-Ching Winnie Li}
\address{Department of Mathematics, Pennsylvania State University,
University Park, PA 16802-8401, USA}
\email{ wli@math.psu.edu}
\date{August 11, 2016}
\begin{document}
\begin{abstract} 
This paper studies  algebraic and analytic structures
associated with the Lerch zeta function.
It defines  a family of 
two-variable Hecke operators $\{ \hT_m: \, m \ge 1\}$ given by 
$\hT_m(f)(a, c) = \frac{1}{m} \sum_{k=0}^{m-1} f(\frac{a+k}{m}, mc)$
acting on certain spaces of real-analytic functions, including Lerch zeta
functions for various parameter values. 
The actions of various related operators on these function spaces are determined.
It is shown that, for each $s \in \CC$, there is 
a two-dimensional vector space spanned
by linear combinations of  Lerch zeta functions characterized as 
a maximal space of simultaneous eigenfunctions
for this family of Hecke operators.  This is an
analogue of  a result of Milnor for
the Hurwitz zeta function. We also relate these functions to a linear partial
differential operator in the $(a, c)$-variables having the Lerch zeta function 
as an eigenfunction.
\end{abstract}

\maketitle
\tableofcontents

\noindent

\setlength{\baselineskip}{1.2\baselineskip}
%
%
%
\section{Introduction}
\setcounter{equation}{0}

The {\em Lerch zeta function}
is  defined by the series
\beql{101}
\zeta (s,a,c) = \sum_{n=0}^\infty \frac{e^{2\pi ina}}{ (n+c)^{s}} 
\eeq
which absolutely converges for complex variables $\Re(s)> 1$, $\Re(c)>0$ and $\Im(a) \ge 0$.
In this paper we will restrict attention to $(a, c)$ being real variables. In
that case, excluding integer values of $a$ and $c$,  it conditionally converges
to an analytic function for $\Re(s)>0$ and analytically continues in the $s$-variable
to the entire plane. When either $a$ or $c$ is an integer (or both) it analytically
continues in $s$ to a meromorphic function whose singularity set 
is  at most a single simple pole, located
at $s=1$, { see \cite[Theorem 2.2]{LL1}.}

Two key  properties of the Lerch zeta function $\zeta(s, a ,c)$ for real $(a, c)$,
restricting to $0< a, c <1$, are the following.
\begin{enumerate}
\item[(1)]
It is an eigenfunction of a linear partial differential operator 
\begin{eqnarray}\label{DL}
D_L := \frac{1}{2 \pi i} \frac{\partial}{\partial a} \frac{\partial}{\partial c} + c \frac{\partial}{\partial c}.
\end{eqnarray}
It satisfies 
{
\begin{equation}\label{Eigenvalue}
(D_L \zeta)(s, a, c) = -s \zeta(s, a,c).
\end{equation}
}
This property was noted in Parts II and III (\cite{LL2}, \cite{LL3}).
When restricting to real variables, we regard $\frac{\partial}{\partial a}$ and $\frac{\partial}{\partial c}$ as
real differential operators. (They were treated as complex differential operators in \cite{LL2} and \cite{LL3}.)
\item[(2)]
{ In particular, it} satisfies two four-term  functional equations encoding a discrete symmetry under
$(s, a, c)$ to $(1-s, 1-c, a)$. 
These
functional equations
were  noted by
Weil \cite{Wei76}
under the restriction $0<a <1$ and $0<c<1$, and were studied in Part I (\cite{LL1}).
{ To state the functional equations, let} 
$$L^{\pm}(s, a, c) :=\zeta(s, a, c) \pm e^{-2 \pi i a} \zeta(s, 1-a, 1-c)$$
 and
define the completed functions
$$
\hat{L}^{\pm} (s, a , c) :=\pi^{- \frac{s+ \ep}{2} } \Gamma( \frac{s+\ep}{2} ) L^{\pm}(s, a, c),
$$
where $\ep := \frac{1}{2}(1 - (\pm1))$ takes values $0$ or $1$. Then the functional equations are
\begin{equation}\label{fun1}
\hat{L}^{+}(s, a, c) = e^{-2 \pi i a c} \hat{L}^{+}(1-s, 1-c , a)
\end{equation}
and
\begin{equation}\label{fun2}
\hat{L}^{-}(s, a,c) = i\,e^{-2 \pi i ac} \hat{L}^{-}(1-s, 1-c , a).
\end{equation}
\end{enumerate}

In this paper we will  study a  family $\{\hT_m: m \ge 1\}$ of  ``Hecke operators'' 
which also preserve the Lerch zeta function, in the sense that it 
is a simultaneous eigenfunction of these operators.
The {\em (two-variable)  Hecke operators} $\hT_m$ are formally defined by
\beql{102}
\hT_m (f)(a,c) := \frac{1}{m} \sum_{k=0}^{m-1} f \left( \frac{a+k}{m}, mc \right).
\eeq
The operators $\hT_m$ stretch coordinates in the $c$-direction, while each
individual term on the right side of \eqn{102}
contracts coordinates in the $a$-direction, and they (formally) commute.
Two-variable 
operators of  this  form   were  originally
  introduced  in 1989 by Zhi Wei Sun \cite{Su89} in a completely different context,
that of covering systems of arithmetic progressions of integers.
In 2000 S. Porubsky \cite[Sect. 4]{Po00} noted that the Lerch zeta function
is an eigenfunction of these operators on a suitable domain.
For  $\Re(s) >1$ the function
 $\zeta(s, a, c)$ is well defined and real-analytic in 
the variables $(a, c)$ in  the domain 
$$\sD_{++} := \{ (a, c) \in \RR_{>0} \times \RR_{>0}\},$$
which is the first quadrant in the $(a, c)$-plane. 
 One can now check that   for $\Re(s) >1$
  the Lerch zeta function
$\zeta(s, a,c)$ has  on this domain the simultaneous  eigenfunction property  
\beql{103aa}
\hT_m (\zeta)(s, a, c) = m^{-s} \zeta(s, a, c)
\eeq
holding  for all $m \ge 1$.

The question this paper considers is that of obtaining an  
extension of the operators  above {inside suitable function spaces, in which 
the Lerch zeta function will be a simultaneous eigenfunction for all $s \in \CC$.}
 We will show there is such
an extension, and study the action of all these operators on
the ensuing function spaces.  We then obtain a characterization of 
simultaneous eigenfunction solutions of these operators 
in these function spaces,  in the spirit of Milnor \cite{Mi83}.


\subsection{{Hecke operators and function spaces}}\label{sec11}

The definition of the  two-variable Hecke operators incorporates  a dilation in one variable
that seems to require it to be defined on an unbounded domain.  As introduced it  makes sense
most naturally on a domain like $[0,1] \times \RR$. 
Obstacles to including the Lerch zeta function in such a space for all ranges of $s$  include:
\begin{enumerate}
\item[(1)]
analytic continuation will be  required in the $s$-variable to cover all $s \in \CC$; 
\item[(2)] the functional
equations do not  leave the domain $\sD_{++}$ invariant; 
\item[(3)] the analytically continued
Lerch zeta function has  discontinuities at  parameter values  $(a, c)$  where 
$a$ is an integer or $c$ is a nonpositive integer. 
\end{enumerate}
The function spaces we consider 
 must therefore incorporate some discontinuous functions.

Our construction  builds on the real-variable results obtained in Part I \cite{LL1}, which   
gave an extension of the Lerch zeta function to real variables $(a, c)$ in $\RR \times \RR$.
The first step was to establish properties for 
 values of $(a,c)$
in the closed unit square 
$$
\Box:= \{ (a, c) : 0 \le a \le 1,\, 0 \leq c \leq 1\},
$$
 and then followed by  
introducing  a real-analytic   extension $\zeta_{\ast}(s, a,c )$ of
$\zeta(s, a, c)$ to $(a, c) \in \RR \times \RR$ 
for $\Re(s) > 1$, called the {\em extended Lerch zeta function}, defined by 
\beql{101c}
\zeta_{\ast}(s, a, c) = \sum_{n+c > 0} e^{2\pi in a} |n+c|^{-s}
\eeq
for $\Re(s) >1$. 
(The resulting function is real-analytic in the variables $a$
and $c$ separately, away from integer values of $a$ and $c$.)
The extended Lerch zeta function specializes
\footnote{There is no need to take the absolute value in this formula, but it becomes
useful in further generalizations.}
 for $0 < c<1$ to 
$$
\zeta_{\RR}(s, a ,c) := \sum_{n=0}^\infty \frac{e^{2\pi ina}}{ |n+c|^{s}}.
$$

\noindent This  extended function analytically continues in the $s$-variable
to the $s$-plane, with a suitable functional equation.  We note three features
of this extension:
\begin{enumerate}
\item
The real-analytic extension obeys {\em twisted-periodicity conditions} in
the $a$ and $c$ variables: 
\begin{eqnarray*}
\zeta_{\ast}(s, a+1, c) & = & \quad\quad\,\,\, \zeta_{\ast}(s, a, c),\\
\zeta_{\ast}(s, a, c+1)& = & e^{-2\pi ia}\zeta_{\ast}(s, a, c),
\end{eqnarray*}
\item
The extended  function satisfies two symmetrized four-term functional equations.
This fact for $0<a<1, 0 < c<1$ was 
 originally noted by  A. Weil \cite[pp. 54--58]{Wei76}. These
functional equations generalize that of the Riemann zeta
function, and  are given in Section \ref{sec51}  below.
\item
The function $\zeta_{\ast}(s, a,c)$ 
has discontinuities in the $a$ and $c$ variables at integer
values of  $a, c$, for various ranges of $s$. These discontinuities
encode information about the nature of the singularities of these functions
in the $(a, c)$-variables at integer values. 
\end{enumerate} 
The extended Lerch function $\zeta_{\ast}(s, a, c)$ will be our extension of
the Lerch function to $\RR \times \RR$; it is well-defined off of grid of horizontal
and vertical lines.

The function spaces on $\RR \times \RR$ that we consider will 
be suitable classes of 
(almost everywhere defined) functions $F(a, c)$ that satisfy the twisted-periodicity conditions above:
\begin{eqnarray*}
F(a+1,\,\, c) &= &\quad\quad \,\,\, F(a, c),\\
F(a, \,\,c+1) & =& e^{-2 \pi i a} F(a, c).
\end{eqnarray*}
The values of such functions are completely determined
by their values in the unit square $\Bx$ in the $(a, c)$-variables. 
A key feature  is that the two-variable Hecke operators preserve the twisted-periodicity  property.
{ In  consequence it suffices to study these functions inside the unit square,
since  twisted-periodicity  then  uniquely extends the function to $\RR \times \RR$, 
providing a means of defining   the 
two-variable Hecke operator action on $\RR \times \RR$ using only 
 function values  defined inside the unit square.}

Another important ingredient of our extension
is an operator encoding the  functional equations. We can reframe 
the functional equation  in terms
of a  new operator acting on functions on $\RR \times \RR$, the $\hR$-operator
defined by 
\begin{eqnarray}\label{R}
\hR (F)(a, c) := e^{-2 \pi i a c} F(1-c, a).
\end{eqnarray}
This operator { has order $4$ and it} takes the space of continuous functions defined on
the unit square $\Bx$ into itself, and 
extends to define an isometry of the function space $L^p(\Bx, da \, dc)$
for $1 \le p \le \infty$.
Furthermore when viewed as acting on  functions with domain $\RR \times \RR$,
it preserves the subspace of twisted-periodic functions.

Our results incorporating the functional equation will be invariant with respect to the action of the 
$\hR$-operator acting on suitable function spaces. 
The action of this operator leads to four families of two-variable Hecke operators,
that  take the following form.
These families are 
$$
\hT_m,~~\hS_m  :=  \hR \hT_m \hR^{-1} ~,~{\hT}_m^{\vee}  :=  \hR^2 \hT_m \hR^{-2},~~ 
\mbox{and}~~{\hS}_m^{\vee}  :=  \hR^3 \hT_m \hR^{-3},
$$
explicitly given as
\begin{eqnarray*}
{\hT}_m f(a, c) & =& \frac{1}{m} \sum_{k=0}^{m-1}f( \frac{a+k}{m}, mc), \\
{\hS}_m f (a,c) & = & \frac{1}{m} \sum_{k=0}^{m-1} 
e^{2 \pi i ka} f \left( ma, \frac{c+k}{m} \right) , \\
{\hT}_m^{\vee} f (a,c) & = &
\frac{1}{m} \sum_{k=0}^{m-1} 
e^{2\pi i (\frac{(1-m)a+k}{m})}f \left( \frac{a+k}{m} , 1 + m(c - 1) \right) , \\
{\hS}_m^{\vee} f (a,c) & = & \frac{1}{m} \sum_{k=0}^{m-1} 
e^{2 \pi i (m-(k+1)) a}f \left( 1 + m(a - 1), \frac{c+m-(k+1)}{m}  \right) .
\end{eqnarray*}

Each of these families of operators leaves invariant one side of the unit square in $(a, c)$-coordinates,
and between them they account for all four sides of the unit square. 
The members of each  family  commute with other members
of the same family. A more  surprising result is  that, in the function spaces we consider below, 
 members of different families of these operators also commute.

To obtain results that apply to the four families {and} for  all complex $s$, we introduce
 function spaces of piecewise continuous functions, 
 allowing discontinuities. The restriction to piecewise continuous
 functions, without completing the function space, is made because  the singularities of the Lerch zeta function at
the boundary of the unit square become large as $\Re(s) \to -\infty$. 
For values of $s$ inside the critical strip  $0 < \Re(s)<1$ we are able to work 
inside various Banach  spaces.
Of particular  interest is the Banach  space $L^1( \Box, da\, dc)$,
which is relevant since the functions $L^{\pm}(s, a, c)$ belong to this space inside
the critical strip (\cite[Theorem 2.4]{LL1}).
More generally one may consider the  spaces $L^p(\Box, da \, dc)$ for $1 \le p \le 2$.
Note that for  twisted-periodic functions $F(a, c)$
the  $L^p$-norm of  $F(a,c)$ is invariant
under measurement in any unit square $x_0 \le a \le x_0+1, y_0 \le c \le y_0 +1$,
allowing real values $x_0, y_0$.

The Hilbert space $\sH= L^2(\Bx, da \, dc)$ is also of great interest; however the Lerch
zeta function $\zeta(s, a, c)$ for $(a, c) \in \Bx$ does not belong to  this space for any $s \in \CC$; for $\Re(s) >1$
this follows from the single term $c^{-s}$ not being in $\sH$; while for other $s$ it is more
subtle, see the proof of  \cite[Theorem 2.4]{LL1}. The operators $\hR$ and the four families of Hecke operators
 induce well-defined actions
of bounded operators on this space and together generate an interesting
noncommutative  algebra of operators that
is a $\star$-algebra in $\sB(\sH)$ which seems worthy of further study.

To obtain  function spaces that allow an  action of differential operators, we  restrict to suitable smaller
spaces of piecewise smooth
functions that allow discontinuities located at a fixed lattice of axis- parallel vertical lines
and horizontal lines, with relevant coordinate vector (horizontal and/or vertical) contained  in a lattice $\frac{1}{d}\ZZ$ for a fixed
$d$ depending on the function. We  can start this construction with such functions
defined on the unit square with specified 
discontinuities produced by twisted-periodicity having relative coordinate vectors in the same lattice $\frac{1}{d} \ZZ$.


\subsection{{Motivation and results}}\label{sec12}

One motivation for this study concerns finding an operator extension  of the Riemann zeta function. 
We may  formally define a {\em zeta operator} $\hZ$ acting on the domain $\sD_{++}$ as 
 $$
 \hZ : = \sum_{m=1}^{\infty}\, {\hT}_m.
 $$
 Now one can check that for 
  a fixed complex value $s$ with $\Re(s) >1$  this operator  has the Lerch zeta function $\zeta(s, a, c)$ as an eigenfunction and 
  the Riemann zeta value  $\zeta(s)$ as an eigenvalue; that is,
 $$
 \hZ(\zeta)(s, a, c) := \zeta(s) \zeta(s, a, c).
 $$
 We do not know how to  make direct sense  of the operator $\hZ$ in a larger domain
 in the $s$-parameter, but the problem to do so 
  provides a motivation for extending the action of the individual two-variable Hecke operators $\hT_m$ 
  to suitable function spaces, which this paper addresses. {In the process,   we  obtain an extension
   inside $L^1( \Box, da\, dc)$  to the parameter range  $0 < \Re(s) <1$ in which 
  $\hT_m(\zeta(s, a, c))= m^{-s} \zeta(s, a, c)$ holds 
  for all individual Hecke operators $\hT_m$.}

Precise statements of the main results are given in Section \ref{sec2},
but we first make  some general remarks.
 
\begin{enumerate}
\item[(1)] We introduce a set of auxiliary operators acting on 
twisted-periodic  function spaces, a
 linear partial differential operator $\hD_L$,
a unitary  operator $\hR$,  and the family of two-variable Hecke operators $\hT_m$,
viewed as acting on the unit square $\Bx$
through the use of twisted-periodic function spaces.
 We determine commutation relations among all these operators
on these function spaces.  

\item[(2)]
 For each $s \in \CC$ we define a two-dimensional vector space $\sE_s$,
the {\em Lerch eigenspace}, consisting of
twisted-periodic real-analytic functions defined on  $\RR^2 \smallsetminus \ZZ^2$ 
(but sometimes discontinuous on the grid $(\ZZ \times \RR) \cup (\RR \times \ZZ)$)
which satisfy 
the eigenvalue identity
\beql{104a}
\hT_m (\zeta_{\ast})(s, a, c) = m^{-s} \zeta_{\ast} (s, a, c)
\eeq
simultaneously for all $m \ge 1$. The spaces $\sE_s$ are preserved
by the $\hR$-operator, and those at $s$ and $1-s$ are related
under the symmetries of the functional equation.

\item[(3)]
 We  give a simultaneous Hecke eigenfunction interpretation of  the Lerch zeta function,
in the spirit of Milnor's \cite{Mi83} characterization of Kubert functions, including
the Hurwitz zeta function. In Section \ref{sec6} we show that for each $s \in \CC$
there is a  two-dimensional vector space $\sE_s$ of simultaneous
eigenfunctions, the {Lerch eigenspace}, sartisfying suitable 
integrability side conditions.
This is a generalization of Milnor's converse
result characterizing the Hurwitz zeta function and Kubert functions.
\end{enumerate}

In  the concluding section
we discuss the 
possible  relevance to the Riemann hypothesis of the
structures associated to the Lerch zeta function 
studied in this series of papers.
 Many previous formulations of
 the Riemann hypothesis have analogues in terms of 
 properties of the Lerch zeta function.
 The operator $D_L$ has features suggested for
 a putative ``Hilbert-Polya'' operator.  \smallskip

%
%
%
\subsection{Related work}\label{sec13}

We have already mentioned the 
definition of  two-variable Hecke
 operators  in 1989 by Zhi Wei Sun \cite{Su89} in connection with covering systems, 
as well as the work of  Porubsky \cite{Po00} noting
the eigenfunction property \eqref{103aa} of the Lerch zeta function.
See also   a survey paper of 
 Porubsky and Sch\"{o}nheim \cite{PS02}
on Erd\H{o}s's work on covering systems.

Related operators in one variable appeared earlier  in 
a number of different contexts.
{ On specializing to  functions $f(a, c)$ that are constant in the
$c$-variable, we obtain} 
one-variable difference operators  of  the form  
\beql{111}
\hT_m(f)(a) := \frac{1}{m} \sum_{k=0}^{m-1} f \left( \frac{a+k}{m}\right).
\eeq
This operator coincides for $m=q$ with an  operator  $U_q^{\ast}$ introduced 
in 1970 by 
Atkin and Lehner \cite[p. 138]{AL70}, which they termed a {\em Hecke operator.}
They motivate their choice of name ``Hecke operator'' to 1957 work of Wohlfahrt \cite{Wol57},
and we follow their terminology in naming the  two-variable operators \eqref{102}.
{  These operators  also appear,  denoted  $U_m$, in the modular forms  literature 
 as an action on $q$-expansions, 
 as $U_m( \sum_{n=0}^{\infty} a_n q^n) := \sum_{n=0}^{\infty} a_{mn} q^n$.
 If now $f= \sum_{n=0}^{\infty} a_n q^n$, then under  the change of variable $q= e^{2 \pi i \tau}$ 
 with $\tau$ in the upper half-plance, this action becomes
 $U_m(f) (\tau) = \frac{1}{m} \sum_{k=0}^{m-1} f(\frac{\tau +k}{m}),$
  see Lang \cite[p. 108 ]{Lang76}, Koblitz \cite[pp. 161-2]{Kob84}.
 In an analogous  $p$-adic modular form context the operator $U_p$
 is   sometimes called  {\em Atkin's operator}, see Dwork \cite{Dwo73} and Katz \cite{Kat73}.}
In 1983 Milnor \cite{Mi83} considered operators of form \eqref{111}, calling
them  {\em Kubert operators.}
General operators
of the type \eqn{111} acting on an abelian  group (for example $\RR/ \ZZ$) 
had previously been studied by Kubert and Lang \cite{KL76} and Kubert \cite{Ku79}.
Milnor  characterized simultaneous eigenfunction solutions
of such operators, in the space of continuous functions on the open interval $(0,1)$.
In Section \ref{sec61} we  review Milnor's  work.

{ On specializing to  functions $f(a, c)$ that are constant in the
$a$-variable, we obtain 
 a different family of one-variable operators,} the
{\em dilation operators}
\beql{110a}
\tilde{\hT}_m(f) (c) := f(mc).
\eeq
These operators satisfy the identities
$$
\tilde{\hT}_m \circ \tilde{\hT}_n= \tilde{\hT}_n \circ \tilde{\hT}_m = \tilde{\hT}_{mn}.
$$
for all $m,n \ge 1$.
The dilation operator $\tilde{\hT}_m$ coincides in form with an
 operator $A_m$ introduced in 1970
in Atkin and Lehner \cite[equation (2.1)]{AL70}\footnote{ The operator  $A_n$
appears for prime $p$ as $A_p= T_p^{\ast} - U_p^{\ast}$. However
Atkin and Lehner apply  $T_p^{\ast}$ only for $p \nmid N$, the level,
and $U_p^{\ast}$ for $p | N$, but the definitions of each operator
make sense for all $p$, so we may relate  them in the identity above.}.
{  The dilation operators  also  appear, denoted $V_m$, in the modular forms literature, 
acting on $q$-expansions as $V_m( \sum_{n=0}^{\infty} a_n q^n) = \sum_{n=0}^{\infty} a_n q^{mn}.$
If now $f= \sum_{n=0}^{\infty} a_n q^n$, then
 under  the change of variable $q= e^{2 \pi i \tau}$ 
 this action becomes $V_m(f) (\tau) = f(m \tau)$, see Lang \cite[p. 108]{Lang76},  Koblitz \cite[pp. 161-162]{Kob84}.}
 { In 1945 Beurling \cite{Beu45} studied completeness properties of dilates $\{\tilde{\hT}_m f: m \ge 1\}$ viewed in $L^2(0,1)$ of
 a periodic function $f$ on $\RR$  of period $1$.}
In 1999 B\'{a}ez-Duarte \cite{BD99} noted that this  family of dilation operators relates to
  the real-variables approach to the Riemann hypothesis due to 
  Nyman  \cite{Nym50} and Beurling \cite{Beu55},  see also B\'{a}ez-Duarte \cite{BD99} 
 and Burnol \cite{Bur04}, \cite{Bur04a}.  Convergence of series composed out of dilations of functions have
 been much studied; they include Fourier series as a special case.
 See Gaposkin \cite{Gap68}, Aisletiner et al \cite{ABS15}, \cite{ABSW15}, Berkes and Weber \cite{BW09}  and Weber \cite{Web11}.

%
%
%
\section{{Main Results}}\label{sec2}
\setcounter{equation}{0}

 In this paper we study  actions of the two-variable Hecke operators
 on different function spaces 
 and determine their commutation relations relative  to various other operators appearing in parts I-III.
 
%
%
%
\subsection{Two-variable Hecke operators and $\hR$-operator }\label{sec21}

 In Section \ref{sec3} we introduce and study  functional operators and  differential operators  on
 functions on {the open unit square} $\Box^{\circ} = (0,1)^2$.
The first of these operators is  the 
$\hR$-operator defined by 
$$
\hR (F) (a, c) = e^{-2\pi i a c} F(1-c, a). 
$$
{ As noted earlier, this  operator  satisfies $\hR^4 = I$; 
it plays a role analogous to the Fourier transform. 
The functional equations
for the Lerch zeta function given in \eqref{fun1} and \eqref{fun2} 
can be put in an elegant form using the $\hR$-operator, see  Proposition \ref{pr21}.}

The second operator $D_L$
is constructed using the linear partial differential operators 
 $D_L ^{+} = \frac{\partial}{\partial c}$ and $D_L^{-} = \frac{1}{2\pi i} \frac{\partial}{\partial a} + c$
 for which one has  
\begin{eqnarray} \label{105}
(D_L ^{+}\zeta)(s, a, c) &=& -s \zeta(s+1, a,c)\nonumber \\
&~&\\
(D_L ^{-}\zeta)(s, a, c) &=& \zeta(s-1, a,c). \nonumber
\end{eqnarray}
{ These operators define unbounded operators
on $L^2( \Box, da\, dc)$ and on $L^1(\Box, da\, dc)$.
In the remainder of Section \ref{sec3} we determine   commutation relations among these operators} 
on the space $C^{1,1} ( \Box^{\circ})$ { of jointly continuously differentiable functions   
on the open unit square $\Box^{\circ}$ 
whose
mixed second partials  are continuous and on which the two first partials commute.}

In Section \ref{sec4} we construct the ``real-variables'' action 
of the two-variable Hecke operators on an extended function space.
We also consider the Hecke operators conjugated by powers of $\hR$:  
$$
\hS_m  :=  \hR \hT_m \hR^{-1} ~,~{\hT}_m^{\vee}  :=  \hR^2 \hT_m \hR^{-2},~~ 
\mbox{and}~~{\hS}_m^{\vee}  :=  \hR^3 \hT_m \hR^{-3}.
$$
Together with $\hT_m$ we have four families of operators, each of which  formally leaves invariant a line passing
through one side of the square $\Box$.


\begin{theorem}\label{th2001}
{\rm (Commuting Operator Families)} 

(1) The four sets of two variable Hecke operators $\{ \hT_m, \hS_m, \hT_m^{\vee}, \hS_m^{\vee}:\,  m \ge 1\}$
{continuously extend to   bounded operators  on each Banach space $L^{p}(\Box, da \, dc)$} 
for $1 \le p \le \infty$, by viewing these (almost everywhere defined)
functions of $\Box$ as extended to $\RR \times \RR$ via the  twisted-periodicity relations
$ f(a+1, c) = f(a, c)$ and $f(a, c+1) = e^{-2\pi i a} f(a, c)$.
These operators satisfy  $\hT_m = \hT_m^{\vee}$, $\hS_m = \hS_m^{\vee}$ and $\hS_m = \frac{1}{m}(\hT_m)^{-1}$ for all $m \ge 1$.

(2)  The $\mathbb C$-algebra $\sA_{0}^{p}$ of operators on $L^p(\Bx, da \, dc)$ generated by these four sets of operators under addition and operator multiplication is commutative.

(3) On $L^2(\Box, da \, dc)$  the adjoint Hecke operator $(\hT_m)^{\ast} = \hS_m$, and 
 $(\hS_m)^{\ast}= \hT_m$. In particular the $\CC$-algebra $\sA_0^2$
is a $\star$-algebra. { In addition each of $\sqrt{m}\hT_m, \sqrt{m}\hS_m$ is 
a unitary operator on  $L^2(\Box, da \, dc)$.}
\end{theorem}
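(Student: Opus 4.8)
The plan is to prove the three assertions by direct computation, working entirely with the explicit closed forms of the four operator families given just before the theorem statement, and exploiting the twisted-periodicity relations to move everything into a single unit square.

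\medskip

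\textbf{Part (1).} First I would verify boundedness on $L^p(\Box, da\, dc)$. Each operator is an average of $m$ terms, so I expect the $L^p$ operator norm to be at most a constant (indeed, by the triangle inequality and a change of variables, I anticipate the norm of each $\hT_m$ term to be controlled, giving a uniform bound). The key point is that the maps $(a,c) \mapsto (\frac{a+k}{m}, mc)$ are measure-preserving up to an explicit Jacobian factor, and the $\frac{1}{m}$ prefactor together with twisted-periodicity keeps all $m$ preimage strips inside one fundamental domain. For the identities $\hT_m = \hT_m^{\vee}$, $\hS_m = \hS_m^{\vee}$, and $\hS_m = \frac{1}{m}(\hT_m)^{-1}$, I would substitute the explicit formulas and simplify using the twisted-periodicity relations $f(a+1,c)=f(a,c)$ and $f(a,c+1)=e^{-2\pi i a}f(a,c)$ to reconcile the phase factors $e^{2\pi i(\cdots)}$ and the shifted arguments like $1+m(c-1)$. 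The identity $\hS_m = \frac{1}{m}(\hT_m)^{-1}$ is the one to be careful with: I would compute $\hT_m \hS_m$ (and $\hS_m \hT_m$) explicitly, where the double sum over the two averaging indices should collapse, with the off-diagonal terms cancelling via a geometric-series identity $\sum_{k} e^{2\pi i k \ell/m}=0$ for $\ell \not\equiv 0$, leaving $\frac{1}{m}I$.

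\medskip

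\textbf{Part (2).} To prove commutativity of the generated algebra $\sA_0^p$, it suffices to check that the generators pairwise commute. Since $\hT_m^{\vee}=\hT_m$ and $\hS_m^{\vee}=\hS_m$ by Part (1), the algebra is generated by $\{\hT_m\}$ and $\{\hS_m\}$ alone, so there are three types of relations to verify: $\hT_m \hT_n = \hT_n \hT_m$, $\hS_m \hS_n = \hS_n \hS_m$, and the mixed relation $\hT_m \hS_n = \hS_n \hT_m$. The first two are the ``same family'' commutations, which should follow from the semigroup structure of the underlying coordinate maps (composing two contractions/dilations in the same variable), giving $\hT_m \hT_n = \hT_{mn} = \hT_n \hT_m$ up to a symmetric phase bookkeeping. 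The mixed relation is the genuinely surprising one flagged in the introduction; I would attack it by writing out both double sums explicitly and showing that the resulting index sets and phase factors match after reindexing. I expect this to reduce to an identity among exponential sums that is symmetric in $m$ and $n$.

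\medskip

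\textbf{Part (3).} For the adjoint computation on the Hilbert space $\sH=L^2(\Box,da\,dc)$, I would compute $\langle \hT_m f, g\rangle$ directly by unfolding the definition, performing the change of variables $a' = \frac{a+k}{m}$, $c'=mc$ in each summand, and summing over $k$; tracking the phase from twisted-periodicity should produce exactly the formula for $\hS_m$ acting on $g$, yielding $(\hT_m)^{\ast}=\hS_m$ and hence $(\hS_m)^{\ast}=\hT_m$. Given this, $\sA_0^2$ being closed under adjoints makes it a $\star$-algebra. Finally, unitarity of $\sqrt{m}\,\hT_m$ follows by combining Parts (1) and (3): from $\hS_m=\frac{1}{m}(\hT_m)^{-1}$ and $(\hT_m)^{\ast}=\hS_m$ we get $(\hT_m)^{\ast}\hT_m = \hS_m \hT_m = \frac{1}{m}I$, so $(\sqrt{m}\,\hT_m)^{\ast}(\sqrt{m}\,\hT_m)=I$, and symmetrically for $\hS_m$. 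The main obstacle throughout is the careful bookkeeping of the exponential phase factors and the shifted arguments under twisted-periodicity; the conceptual content is light once the right change of variables is fixed, but keeping the boundary terms and the $e^{2\pi i(\cdots)}$ factors consistent across all four operator definitions is where errors are most likely to creep in, so I would organize the proof around one master change-of-variables lemma for the coordinate maps and apply it uniformly.
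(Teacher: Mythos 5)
Your proposal is correct in substance and, for parts (1) and (3), follows essentially the same route as the paper: the $L^p$ bound via a change of variables in each summand combined with twisted-periodicity to fold the dilated $c$-range back into $\Box$ (the paper's Lemma \ref{le49}), the relation $\hS_m=\frac{1}{m}(\hT_m)^{-1}$ via the two-sided computation of $\hT_m\hS_m$ and $\hS_m\hT_m$ collapsing under the orthogonality sum $\sum_{k=0}^{m-1}e^{2\pi i jk/m}$ (Lemma \ref{le46}), and the adjoint and unitarity computations exactly as in Section \ref{sec44}, including deducing unitarity from $(\hT_m)^{\ast}\hT_m=\hS_m\hT_m=\frac{1}{m}I$. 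The one genuine divergence is in part (2): for the mixed commutation $\hT_\ell\hS_m=\hS_m\hT_\ell$ you propose writing out both double sums and matching them after reindexing, whereas the paper (Lemma \ref{le47}) obtains it for free from what you have already established in part (1): since $\hS_m=\frac{1}{m}(\hT_m)^{-1}$ and $\hT_\ell\hT_m=\hT_{\ell m}=\hT_m\hT_\ell$, the operator $\hT_\ell$ automatically commutes with $(\hT_m)^{-1}$ and hence with $\hS_m$. Your brute-force route will close in the end, since the operators do commute, but for general $\ell,m$ the phases do not collapse by a single orthogonality identity and the bookkeeping is substantially worse; you already hold the shortcut in hand, so use it. Two minor cautions: the uniform-in-$m$ operator norm bound you anticipate from the naive triangle inequality is not what comes out directly (each single term $f\mapsto f(\frac{a+k}{m},mc)$ has $L^p(\Box)$-norm of order $m^{1/p}\|f\|_p$ because the $c$-dilation spreads over $m$ periods; the paper settles for $\|\hT_mF\|_p\le m\|F\|_p$ via a strip decomposition, which is all the theorem needs), and every algebraic identity must first be verified on the dense twisted-periodic subspace $\sD_p$ and then transported to $L^p(\Box,da\,dc)$ by continuity, a step your outline accommodates but should state explicitly.
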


For all $p \ge 1$ the operator $\hR$ defines  
an isometry  $||\hR (f)||_p = ||f||_p$ on $L^p(\Bx, da \, dc)$.
In consequence we obtain an extended algebra
$\sA^p := \sA_0^p [ \,\hR\,]$ of operators, by adjoining the operator $\hR$ to $\sA_0^p$.
The algebra  $\sA^p$ is a noncommutative algebra. {The algebra  $\sA^2$ is also  a $\star$-algebra, 
which follows using $(\hR)^{\star} = \hR^{-1}= \hR^3.$}

%
%
%
\subsection{Two-dimensional  Lerch eigenspace}\label{sec22}

In Section \ref{sec5} we study the Lerch eigenspace
$\sE_s$ defined as  the vector space
over $\CC$ spanned by the four functions
$$
\sE_s := <L^{+}(s, a, c), L^{-}(s, a, c),  e^{-2 \pi i a c} L^{+}(1-s, 1-c, a), e^{-2 \pi i a c} L^{-}(1-s, 1-c, a)>, 
$$
viewing the $(a, c)$-variables on $\Bxc$.
{  We treat the variable
$s$ as constant, and all $s \in \CC$ are allowed since on $\Bxc $ the functions $L^{\pm}(s, a, c)$
 are entire functions of $s$. Note that the gamma factors are omitted from the functions  in this definition.}
These functions satisfy linear dependencies by virtue of the two functional equations
that $L^{\pm}(s, a, c)$ satisfy { (Theorem \ref{th201}).} 

\begin{theorem}\label{th2002} {\rm (Operators on Lerch eigenspaces)}
For each $s \in \CC$ the space $\sE_s$ is
a two-dimensional vector space.
All functions in $\sE_{s}$ have the following properties.
\begin{itemize}
\item[(i)] 
{\rm (Lerch differential operator eigenfunctions)}
Each $f \in \sE_s$ is an eigenfunction of the Lerch differential operator 
$\hD_L = \frac{1}{2 \pi i} \frac{\pt}{\pt a}
\frac{\pt}{\pt c} + c \frac{\pt}{\pt c}$ with
eigenvalue $-s$, namely 
$$
(\hD_L f )(s, a, c) = -sf (s, a, c) 
$$
holds at all $(a, c) \in \RR \times \RR$, with both $a$ and $c$ non-integers. 
\item[(ii)] 
{\em (Simultaneous Hecke operator eigenfunctions)}
Each $f \in \sE_s$ is a simultaneous eigenfunction { with
eigenvalue $m^{-s}$ of all two-variable Hecke operators
$$
\hT_m (f)(a,c) = \frac{1}{m} \sum_{k=0}^{m-1} f \left( \frac{a+k}{m}, mc \right)
$$
in the sense that, for each $m \ge 1$,}
$$
\hT_m f = m^{-s} f  
$$
holds  on the domain $(\RR \smallsetminus \frac{1}{m}\ZZ)
\times (\RR \smallsetminus \ZZ)$.
\item[(iii)] 
{\rm ($\hJ$-operator eigenfunctions)} The space $\sE_s$ admits the involution
{ $$\hJ f (a,c) : = e^{-2 \pi ia} f(1-a, 1-c),$$} under which it 
 decomposes into one-dimensional eigenspaces 
$\sE_s = \sE_s^+ \oplus \sE_s^-$ with eigenvalues $\pm 1$,
that is, $\sE_s^{\pm} = <F_{s}^{\pm}>$ and  
$$
\hJ (F_s^{\pm}) = \pm F_s^{\pm}.
$$
\item[(iv)]  
{\rm ($\hR$-operator action)} 
The $\hR$-operator $\hR (F) (a, c) = e^{-2\pi i a c} F(1-c, a)$
acts by $\hR(\sE_s)= \sE_{1-s}$ with
$$
\hR(L^{\pm}(s, a, c) ) = w_{\pm} ^{-1}\ggG^{\pm}(1-s) L^{\pm}(1-s, a, c), 
$$
where $w_{+}= 1$, $w_{-}=i$, $\gamma^+(s) = \Gamma_{\RR}(s)/\Gamma_{\RR}(1-s)$, $\gamma^-(s) = \gamma^+(s+1)$, and $\Gamma_{\RR}(s) = \pi^{-s/2}\Gamma(s/2)$. 
\end{itemize}
\end{theorem}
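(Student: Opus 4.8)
The plan is to reduce every assertion to the behaviour of the two completed functions $\hat{L}^{\pm}(s,a,c) = \Gamma_{\RR}(s+\ep)L^{\pm}(s,a,c)$ (with $\ep=0,1$) under three inputs that are already available: the eigenvalue identity \eqref{Eigenvalue} for $\zeta$, the Hecke identity \eqref{103aa} valid for $\Re(s)>1$, and the two functional equations \eqref{fun1}, \eqref{fun2}. The organizing observation I would record first is that the involution $\hJ$ is nothing but $\hR^2$: a one-line computation gives $\hR^2 F(a,c) = e^{-2\pi i a}F(1-a,1-c) = \hJ F(a,c)$, whence $\hJ^2 = \hR^4 = I$. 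Writing $L^{\pm} = (I\pm\hJ)\zeta$ (since $\hJ\zeta(a,c) = e^{-2\pi i a}\zeta(s,1-a,1-c)$ is exactly the second summand) and using $\hJ^2=I$, I get $\hJ L^{\pm}=\pm L^{\pm}$ at once. This already yields (iii): the generators $L^+,L^-$ are the $\pm1$ eigenvectors, so $\sE_s = \langle L^+\rangle\oplus\langle L^-\rangle$ once two-dimensionality is established, with $F_s^{\pm}$ proportional to $\hat{L}^{\pm}$.

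Two-dimensionality I would prove uniformly in $s$ through the completed functions. Substituting $s\to 1-s$ into \eqref{fun1}, \eqref{fun2} and solving shows that each of the four defining generators is a scalar multiple of a single function of its $\hJ$-parity: $L^+(s,\cdot) = \Gamma_{\RR}(s)^{-1}\hat{L}^+(s,\cdot)$ and the third generator $\hR[L^+(1-s)] = \Gamma_{\RR}(1-s)^{-1}\hat{L}^+(s,\cdot)$ are both proportional to $\hat{L}^+(s,a,c)$, and likewise the second and fourth generators are proportional to $\hat{L}^-(s,a,c)$. Hence $\sE_s^{\pm}=\langle\hat{L}^{\pm}(s,a,c)\rangle$ is at most one-dimensional. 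For the lower bound I would check that $\hat{L}^{\pm}(s,\cdot)$ is not identically zero for any $s$ (it is nonzero for $\Re(s)>1$ and entire in $s$) and that the two scalars $\Gamma_{\RR}(s)^{-1}$, $\Gamma_{\RR}(1-s)^{-1}$ never vanish simultaneously, their zero sets $\{0,-2,\dots\}$ and $\{1,3,\dots\}$ being disjoint. This is precisely why all four generators are needed: at the trivial zeros $s\in\{0,-2,\dots\}$ where $L^+(s,\cdot)\equiv 0$, the third generator $\hR[L^+(1-s)]$ survives and carries $\sE_s^+$. Combining the two bounds gives $\dim\sE_s=2$ for every $s$, and part (iv) falls out of the same substitution as the restatement $\hR(L^{\pm}(s,\cdot)) = w_{\pm}^{-1}\gamma^{\pm}(1-s)L^{\pm}(1-s,\cdot)$ after bookkeeping the $\Gamma_{\RR}$-factors; since $\hR^4=I$ the inclusion $\hR(\sE_s)\subseteq\sE_{1-s}$ is then an equality.

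For (i) and (ii) I would exploit that $\zeta$ generates $\sE_s$ together with $\hJ$, and that both $\hD_L$ and $\hT_m$ commute with $\hJ$. The commutation $[\hD_L,\hJ]=0$ is a direct chain-rule check (the term $c\,\partial/\partial c$ transforms into $(1-c)\,\partial/\partial c$ under $a\mapsto 1-a$, $c\mapsto 1-c$, exactly matching $\hJ\hD_L$); then $\hD_L L^{\pm} = (I\pm\hJ)\hD_L\zeta = (I\pm\hJ)(-s\zeta) = -sL^{\pm}$ by \eqref{Eigenvalue}, giving (i) at all non-integer $(a,c)$. For (ii) the commutation $[\hT_m,\hJ]=0$ is free: by Theorem \ref{th2001}(1) one has $\hT_m=\hT_m^{\vee}=\hR^2\hT_m\hR^{-2}=\hJ\hT_m\hJ^{-1}$. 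It then remains to upgrade \eqref{103aa} from $\Re(s)>1$ to all $s$; for fixed generic $(a,c)$ both $\hT_m(\zeta)(s,a,c)$ (a finite sum of $s$-analytic terms) and $m^{-s}\zeta(s,a,c)$ are analytic in $s$ and agree on a half-plane, hence everywhere, and $\hT_m L^{\pm} = (I\pm\hJ)\hT_m\zeta = m^{-s}L^{\pm}$ follows.

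The main obstacle is precisely this last step executed at the level of functions on $\RR\times\RR$: one must track the discontinuity grid so that $\hT_m$, defined through twisted-periodicity, genuinely implements the analytically continued identity, and identify the exact exceptional set (where $a$ or $mc$ is an integer) off of which the eigenvalue equation holds. I expect the two-dimensionality at the trivial zeros and this continuation-plus-discontinuity bookkeeping to be the only genuinely delicate points; everything else is formal manipulation with $\hR$, $\hJ$, and the functional equations \eqref{fun1}, \eqref{fun2}.
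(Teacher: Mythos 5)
Your outline is close in spirit to the paper's proof (direct verification on explicit generators, with the functional equation driving (iv) and the dimension count), and several of your reductions are sound: $\hJ=\hR^2$, the commutations $[\hD_L,\hJ]=0$ and $\hJ\hT_m\hJ^{-1}=\hT_m^{\vee}=\hT_m$, and the analytic continuation in $s$ of the identity $\hT_m\zeta=m^{-s}\zeta$ from $\Re(s)>1$ are all legitimate and appear (in some form) in the paper. But there is a concrete gap in your treatment of (i) and (ii): you verify both eigenvalue identities only on $L^{\pm}_s=(I\pm\hJ)\zeta$, and as you yourself observe in the dimension count, at $s\in\{0,-2,-4,\dots\}$ the function $L^{+}_s$ vanishes identically (and $L^{-}_s$ vanishes at $s\in\{-1,-3,\dots\}$), so $\langle L_s^{+},L_s^{-}\rangle$ is a \emph{proper} one-dimensional subspace of $\sE_s$ there. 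The surviving generator is $R_s^{\pm}=\hR(L_{1-s}^{\pm})$, and neither $[\hD_L,\hJ]=0$ nor $[\hT_m,\hJ]=0$ reaches it, since $\hJ=\hR^2$ while the needed conjugation is by a single power of $\hR$. The paper closes exactly this hole in two different ways: for (i) it uses the anticommutation $\hD_L\hR+\hR\hD_L=-\hR$ (Lemma \ref{le301}(iii)) to transport the eigenvalue $-(1-s)$ on $L_{1-s}^{\pm}$ to the eigenvalue $-s$ on $\hR(L_{1-s}^{\pm})$; for (ii) it performs a second direct Dirichlet-series computation on the basis $e^{-2\pi iac}\zeta(1-s,1-c,a)$, $e^{-2\pi iac+2\pi ic}\zeta(1-s,c,1-a)$ valid for $\Re(s)<1$. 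Your analytic-continuation device would also work if applied to $R_s^{\pm}$ directly (the identities hold for $s\notin\ZZ$ by proportionality to $L_s^{\pm}$, and both sides are entire in $s$ for fixed noninteger $(a,c)$), but you must say this; as written, (i) and (ii) are unproved at precisely the exceptional $s$ your theorem statement includes.

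A second, smaller point: your justification that $\hat{L}^{\pm}(s,\cdot)\not\equiv 0$ for every fixed $s$ (``nonzero for $\Re(s)>1$ and entire in $s$'') does not suffice, since entirety in $s$ for each fixed $(a,c)$ is compatible with identical vanishing in $(a,c)$ at an isolated $s_0$. The correct argument is the one implicit in the paper's Lemma \ref{dimEs}: for $\Re(s)>0$ the Fourier coefficients of $L^{\pm}(s,a,c)$ in the $a$-variable are $|n+c|^{-s}$ up to sign, hence nonzero and linearly independent, and for $\Re(s)\le 0$ one applies the functional equation to move to $\Re(1-s)\ge 1$. With these two repairs your proof goes through; the residual difference from the paper is organizational (you route everything through $\hJ$-parity and continuation in $s$, whereas the paper uses the raising/lowering factorization $\hD_L=\hD_L^{-}\hD_L^{+}$ and two explicit series computations), and neither route is materially shorter.
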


The  members of the Lerch eigenspaces are shown to 
have the following analytic properties { (Theorem \ref{th31}).}


\begin{theorem}\label{th2003}
{\rm (Analytic Properties of Lerch eigenspaces)}
For fixed $s \in \CC$ the functions in the Lerch eigenspace $\sE_s$ are real analytic
functions of $(a, c)$ on  $(\RR \smallsetminus \ZZ) \times (\RR \smallsetminus \ZZ)$,
which may be discontinuous at values $a, c \in \ZZ$. They 
have the following properties.
\begin{itemize}
\item[(i)] {\rm (Twisted-Periodicity Property)} All functions  $F(a,c)$ in $\sE_{s}$  satisfy the 
twisted-periodicity functional equations
\begin{eqnarray*} 
F(a+1,~c~) & =& ~~~~~~~~ F(a,c), \\
F(~a~, c+1)& = & e^{-2\pi i  a} F(a, c). 
\end{eqnarray*}

\item[(ii)] {\rm (Integrability Properties)}

(a) If $\Re(s) >0$,  then 
for each noninteger $c$ all functions  in $\sE_{s}$
have 
$f_c(a) :=F(a, c) \in L^{1}[(0,1), da],$ and  
all  their Fourier coefficients
$$
f_n(c) := \int_{0}^1 F(a,c) e^{-2\pi i n a} da, ~~~~~~~n \in \ZZ,
$$
are  continuous
functions of $c$ on $0<c<1$.

(b) If $\Re(s)<1$,  then for each noninteger $a$ all functions  in $\sE_{{s}}$
have $ g_a(c):=e^{2 \pi i a c}F(a, c) \in L^{1}[(0,1), dc],$
and all Fourier coefficients
$$
g_n(a) := \int_{0}^1 e^{2\pi i a c}F(a,c) e^{-2\pi i n c} dc, ~~~~~~~n \in \ZZ,
$$
 are  continuous
functions of $a$ on $0< a<1$.

(c) If $0 < \Re(s) < 1$ then all functions in $\sE_{s}$ belong to 
$L^{1}[\Box , da dc]$. In this range the vector  space $\sE_s$ is  invariant under the action
of all four sets $ \hT_m, \hS_m, \hT_m^{\vee}, \hS_m^{\vee}  ~( m \ge 1)$ of two-variable Hecke operators. 
\end{itemize}
\end{theorem}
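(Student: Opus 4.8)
The plan is to verify every assertion on a fixed spanning set of $\sE_s$ and then use the $\hR$-operator to halve the analytic work. By definition $\sE_s$ is spanned by $L^{\pm}(s,a,c)$ together with $\hR\big(L^{\pm}(1-s,\cdot,\cdot)\big)(a,c)=e^{-2\pi i ac}L^{\pm}(1-s,1-c,a)$, and since conditions (i)--(ii) are linear and stable under scalars it suffices to treat these four generators. For part (i) I would first record the twisted-periodicity of the extended Lerch function $\zeta_{\ast}(s,a,c)$, which follows from the defining series \eqn{101c} by reindexing (this is the extension of Part~I \cite{LL1}); writing $L^{\pm}(s,a,c)=\zeta_{\ast}(s,a,c)\pm e^{-2\pi i a}\zeta_{\ast}(s,1-a,1-c)$ and applying the periodicity relations to each summand gives twisted-periodicity of $L^{\pm}(s)$. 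The other two generators are $\hR$-images, and since the $\hR$-operator preserves the subspace of twisted-periodic functions (noted in Section~\ref{sec11}) they inherit the property automatically.

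The structural observation for part (ii) is that $\hR$ interchanges the two integrability conditions. If $F=\hR(H)$ with $H\in\sE_{1-s}$ (possible since $\hR(\sE_{1-s})=\sE_s$ by Theorem~\ref{th2002}(iv)), then $e^{2\pi i ac}F(a,c)=H(1-c,a)$, so an $a$-slice of $F$ is, up to the unimodular factor $e^{-2\pi i ac}$, a $c$-slice of $H$, and the measure-preserving change $c\mapsto 1-c$ on $(0,1)$ matches the two families of Fourier coefficients (explicitly, the coefficient $g_n$ of $F$ in (b) equals the $a$-Fourier coefficient $f_{-n}$ of $H$ from (a)). Tracking the threshold $\Re(1-s)=1-\Re(s)$, one finds that (a) for $F$ at $\Re(s)>0$ is equivalent to (b) for $H$ at $\Re(1-s)<1$, and (b) for $F$ is equivalent to (a) for $H$. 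Applying this to the generators $\hR(L^{\pm}(1-s))$ reduces all cases of (a) and (b) to proving (a) and (b) directly for the single family $L^{\pm}$, equivalently for the building blocks $\zeta(s,a,c)$ and $\zeta(s,1-a,1-c)$ on $\Bxc$.

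For these building blocks I would deduce (a) and (b) from the boundary behaviour of the Lerch zeta function in Part~I \cite{LL1}. Fixing a noninteger $c\in(0,1)$, the function $a\mapsto\zeta(s,a,c)$ is real-analytic off $a\in\ZZ$ and carries a boundary singularity of order $|a|^{\Re(s)-1}$ at the integers, which is $da$-integrable exactly when $\Re(s)>0$; its Fourier coefficient $\int_0^1\zeta(s,a,c)e^{-2\pi i na}\,da$ equals $(n+c)^{-s}$ for $n\ge 0$ and vanishes otherwise (both sides are holomorphic in $s$ and agree for $\Re(s)>1$, where the series converges absolutely, hence agree by analytic continuation), and these are continuous in $c$ on $(0,1)$. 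This yields (a) for $\Re(s)>0$. Symmetrically, $c\mapsto\zeta(s,a,c)$ has a singularity of order $|c|^{-\Re(s)}$ at the integers, $dc$-integrable exactly when $\Re(s)<1$, and the same analytic-continuation argument identifies the Fourier coefficients of $e^{2\pi i ac}\zeta(s,a,c)$ and shows their continuity in $a$; this gives (b) for $\Re(s)<1$. Forming the combinations defining $L^{\pm}(s)$ and invoking the $\hR$-swap of the previous paragraph then delivers (a) and (b) for all of $\sE_s$.

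For part (c), when $0<\Re(s)<1$ both (a) and (b) hold, and the pointwise singularity bounds $|a|^{\Re(s)-1}$ and $|c|^{-\Re(s)}$ (with their reflections at the opposite edges) dominate $|F|$ and all lie in $L^1(\Box,da\,dc)$ in this range, giving $F\in L^1(\Box)$; alternatively this membership follows for $L^{\pm}(s)$ from \cite[Theorem 2.4]{LL1} and for the $\hR$-images from the fact that $\hR$ is an $L^1$-isometry (Theorem~\ref{th2001}). Invariance of $\sE_s$ under the Hecke operators is then immediate: by Theorem~\ref{th2002}(ii) each $f\in\sE_s$ satisfies $\hT_m f=m^{-s}f\in\sE_s$ almost everywhere, and since $\hT_m$ is bounded on $L^1(\Box)$ (Theorem~\ref{th2001}) this identity persists there; the relations $\hS_m=\hR\hT_m\hR^{-1}$ together with $\hR(\sE_s)=\sE_{1-s}$ (Theorem~\ref{th2002}(iv)) dispose of the remaining families. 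The main obstacle is the core estimate of the third paragraph: pinning down the exact boundary singularity exponents and, above all, justifying the Fourier-coefficient formulas and their continuity in the conditionally convergent strip $0<\Re(s)\le 1$, where termwise integration is unavailable and one must rely on the analytic continuation and asymptotic expansions established in Part~I.
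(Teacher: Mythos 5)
Your proposal follows essentially the same route as the paper's proof: twisted-periodicity is imported from Part I and transported to the other generators by $\hR$, conditions (ii-a) and (ii-b) are deduced from the boundary-singularity and explicit Fourier-coefficient analysis of Part I with the $\hR$-operator interchanging the two conditions and swapping $s \leftrightarrow 1-s$, the $L^1(\Box, da\, dc)$ membership in (ii-c) is cited from Part I, and the Hecke invariance follows from the eigenfunction property together with $\hS_m = \hR \hT_m \hR^{-1}$ and $\hR(\sE_s) = \sE_{1-s}$. The one point the paper treats with extra care is the integer values of $s$, where the Part I decomposition into singular basis functions acquires poles; the paper handles (ii-a) for integer $s \ge 1$ and (ii-b) for integer $s \le -1$ by a direct Fourier-series argument and Rohrlich's lemma at $s=1$, and your analytic-continuation argument for the Fourier coefficients would need the same supplementary check at those points.
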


%
%
%
\subsection{Eigenfunction characterization of Lerch eigenspace}\label{sec23}

In Section \ref{sec6} we first review  Milnor's simultaneous eigenfunction characterization of
Kubert functions (Theorem \ref{th41}).  We then prove  the 
 following  characterization for the Lerch eigenspace
$\sE_s$ (Theorem \ref{th62}), { the main result of this paper}, which can be viewed as a generalization to two dimensions
of Milnor's result. { It may also be regarded as a converse theorem to Theorem \ref{th2003}.}


\begin{theorem}~\label{th2004}
{\rm (Lerch Eigenspace Characterization)}
Let $s\in \CC$. Suppose that
$F(a, c): (\RR \smallsetminus \ZZ) \times (\RR \smallsetminus \ZZ) \to \CC$ 
is a continuous function  that satisfies the following conditions.
\begin{itemize}
\item[(1)] {\em (Twisted-Periodicity Condition) }
For $(a,c) \in (\RR \smallsetminus \ZZ) \times (\RR \smallsetminus \ZZ)$, 
\begin{eqnarray*}
F(a+1, ~c~) & =& ~~~~~~~~F(a,c) \\
F(a, ~c+1) &=& e^{-2\pi i a} F(a,c).
\end{eqnarray*}

\item[(2)] {\em (Integrability Condition) } At least one of the following two
conditions (2-a) or (2-c) holds.

{\rm (2-a)} The $s$-variable has
$\Re(s) > 0$. 
For   $0<c<1$ each function $f_c(a):= F(a, c) \in L^1[(0,1), da]$, and    
  all  the Fourier coefficients 
 \[
   f_n(c) := \int_{0}^{1} f_c(a)e^{-2\pi i n a} da= \int_{0}^1 F(a,c) e^{-2\pi i n a}da,~~~n \in \ZZ,
   \]
  are continuous functions of $c$.

 {\rm (2-c)}  The $s$-variable has
 $\Re(s) < 1$. For  $0<a<1$ each function $g_a(c):= e^{2 \pi i a c}F(a,c) \in L^1[(0,1), dc]$,  and
 all  the Fourier coefficients 
 \[
   g_n(a) := \int_{0}^{1} g_a(c)e^{-2\pi i n c} dc= \int_{0}^1 e^{2\pi i a c}F(a,c) e^{-2\pi i n c}dc,~~~n \in \ZZ,
   \]
 are continuous functions of $a$.

  \item[(3)] {\em (Hecke Eigenfunction Condition)} For all $m \ge 1$, 
$$
\hT_{m}(F)(a, c) = m^{{-s}}F(a, c)
$$
holds for  
$(a, c) \in (\RR \smallsetminus \ZZ)\times (\RR \smallsetminus \frac{1}{m}\ZZ)$.
     \end{itemize} 
    
\noindent Then  
$F(a, c)$ is the restriction to noninteger $(a, c)$-values
of a function in the Lerch eigenspace $\sE_{s}$.
\end{theorem}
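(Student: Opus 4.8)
The plan is to reduce the two-variable eigenfunction problem to a one-dimensional problem by Fourier analysis in a single variable, and then to solve that one-dimensional problem by a Milnor-type argument exploiting the multiplicative structure of the eigenvalues $m^{-s}$. I would treat in detail the case where hypothesis (2-a) holds (so $\Re(s)>0$), handling the case (2-c) symmetrically by a parallel Fourier expansion of the twisted function $e^{2\pi i a c}F(a,c)$ in the $c$-variable (or, alternatively, by invoking the $\hR$-operator symmetry, which carries $\sE_s$ to $\sE_{1-s}$).

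First I would expand $F$ in a Fourier series in the $a$-variable. The twisted-periodicity $F(a+1,c)=F(a,c)$ makes $F(\cdot,c)$ periodic of period one, so for each noninteger $c$ I set $f_n(c)=\int_0^1 F(a,c)e^{-2\pi i n a}\,da$; hypothesis (2-a) guarantees these exist and are continuous in $c$, while the second twisted-periodicity relation $F(a,c+1)=e^{-2\pi i a}F(a,c)$ yields the exact identity $f_n(c+1)=f_{n+1}(c)$. Consequently all the Fourier coefficients are governed by a single function $\Phi(t):=\int_0^1 F(a,t)\,da$ through $f_n(c)=\Phi(c+n)$, where $\Phi$ is defined on $\RR\smallsetminus\ZZ$ and is continuous on each interval $(k,k+1)$.

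Next I would feed the Hecke condition into this picture. Computing the $n$-th $a$-Fourier coefficient of $\hT_m(F)$ directly, by substituting $a'=(a+k)/m$ in each of the finitely many terms and reassembling the integrals, shows that this coefficient equals $f_{mn}(mc)$; comparing with $\hT_m(F)=m^{-s}F$ and rewriting through $\Phi$ produces the clean multiplicative functional equation $\Phi(mt)=m^{-s}\Phi(t)$, valid for every $m\ge 1$ and every $t\notin\frac1m\ZZ$ (in particular for all irrational $t$). Setting $G(t):=|t|^{s}\Phi(t)$ converts this into $G(mt)=G(t)$, so $G$ is invariant under multiplication by every positive integer, hence by every positive rational. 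The decisive step is then the observation that for fixed irrational $t_0\in(0,1)$ the orbit $\{(p/q)t_0\}\cap(0,1)$ is dense in $(0,1)$ and $G$ is constant on it; since $G$ is continuous on $(0,1)$ this forces $G$ to be constant there, and the same argument on $(-1,0)$ gives a second constant. Propagating back through $\Phi(mt)=m^{-s}\Phi(t)$ yields $\Phi(t)=C\,t^{-s}$ for $t>0$ and $\Phi(t)=C'\,|t|^{-s}$ for $t<0$, a two-parameter family. The parallel computation under (2-c) gives instead the exponent $s-1$, landing in the reflected basis $e^{-2\pi i a c}L^{\pm}(1-s,1-c,a)$ of $\sE_s$.

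Finally I would reconstruct $F$ from its coefficients $\Phi(c+n)$, which are precisely those of $C\,\zeta_{\ast}(s,a,c)+C'\sum_{n+c<0}|n+c|^{-s}e^{2\pi i n a}$; since $\zeta_{\ast}=\frac12(L^{+}+L^{-})$ and the negative part equals $\frac12(L^{+}-L^{-})$, this combination lies in $\sE_s$. As both $F$ and the candidate are continuous on $(\RR\smallsetminus\ZZ)^2$ with identical Fourier coefficients, they coincide, so $F$ is the restriction of an element of $\sE_s$. I expect the main obstacle to be the careful justification of the Fourier manipulations under only $L^1$ and continuity hypotheses, and in particular seeing clearly why the hypothesis that the Fourier coefficients be continuous is exactly what powers the density argument: without it $G$ could be a wild $\QQ^{\times}$-invariant function and finite-dimensionality would fail. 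A secondary bookkeeping point is confirming that the different normalizations emerging from (2-a) and (2-c) both land in the single two-dimensional space $\sE_s$, which is where the functional-equation-driven collapse of the four spanning functions to two becomes essential.
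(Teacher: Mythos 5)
Your proposal is correct and takes essentially the same route as the paper's own proof: Fourier expansion in $a$, the twisted-periodicity identity $f_n(c)=f_0(c+n)$, the Hecke condition forcing $|t|^{s}f_0(t)$ to be invariant under dilation by positive rationals, continuity yielding two constants $A,B$ on $(0,\infty)$ and $(-\infty,0)$, and reconstruction of $F$ as a combination of $L^{\pm}(s,a,c)$ by uniqueness of Fourier series (with the parallel $c$-variable expansion under (2-c)). Your packaging of all coefficients into a single function $\Phi$ on $\RR\smallsetminus\ZZ$ is only a cosmetic reorganization of the paper's family $\tilde f_n$.
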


This generalization  imposes extra integrability conditions in order to control
the dilation property of the two-variable Hecke operators in the $c$-coordinate.
{  The twisted-periodicity hypothesis plays an essential role in
the proof of Theorem \ref{th2004},  in giving identities  that the Fourier series coefficients of
such functions must satisfy, see \eqref{440}.}

%
%
%
\subsection{{Further extensions}}\label{sec24}

One may further { define generalizations} of the Lerch zeta function that incorporate
 Dirichlet characters $\chi(\bmod~N)$. In the real-analytic case, they  take the form
 $$
 L^{\pm}(\chi, s ,a ,c ) = \sum_{n \in \ZZ} \chi(n) (\sgn (n+c))^k e^{2\pi i na} |n+c|^{-s},
 $$
 with $k=0,1$. These functions  can be expressed as linear combinations of scaled
 versions of $L^{\pm}(s, a,c).$
 The Hecke operator framework  extends  to apply to such
functions, with some extra complications, see \cite{Lag15}.

  Subsequent work of  the first author \cite{Lag15} 
gives a representation-theoretic interpretation of the
``real-variables'' version of the Lerch zeta function treated in this paper, 
related to function spaces
associated to the Heisenberg group.
Under it, various combinations of Lerch zeta function and variants  twisted
by Dirichlet characters are found to play the role of Eisenstein series
with respect to the operator $\Delta_L=D_L + \frac{1}{2} {\bf I}$, 
where ${\bf I}$ is the identity operator, 
which plays the role of a Laplacian.
The Lerch zeta functions  treated in this paper correspond to the 
trivial Dirichlet character. In this Eisenstein series interpretation the Lerch
functions $L^{\pm}(s, a, c)$ on the line $\Re(s)= \frac{1}{2}$
parametrize a pure continuous spectrum of the operator $\Delta_L$
on a suitable Hilbert space $\sH$. The  real-analytic  Hecke operators $\hT_m$ described in this paper 
then correspond to dilation operators similar to $\tilde{\hT}_m$ in Section \ref{sec13}.

In another sequel paper \cite{La5} the first author  studies
a  ``complex-variables'' two-variable 
Hecke operator action associated to  the Lerch zeta functions.
  This framework again includes the Lerch zeta function for $\Re(s)>1$,
with two-variable Hecke operators initially viewed  as acting on a suitable domain of holomorphic functions.   
It then  studies these Hecke operators acting on spaces of (multivalued)
holomorphic functions on various domains. 
These are quite  different function spaces than the ones treated here
and the resulting Hecke action has new features. In particular members of different  families of Hecke operators 
 do not commute on these function spaces.   \medskip

\paragraph{\bf Notation.} Much of the analysis of this paper concerns functions
with $s \in \CC$ regarded as a parameter. We let $\Bx= \{ (a, c) \in [0, 1]\times [0, 1]\}$,
and its interior
$ \Bxc = \{ (a,c) \in (0,1) \times (0,1)\}$.

%
%
%
\section{$R$-Operator and Differential Operators}\label{sec3}
\setcounter{equation}{0}

We introduce various operators with respect to which  
the Lerch zeta function has  invariance properties.
We first restate  operators acting on functions 
of two real variables $(a, c)$ defined almost everywhere
on the unit square $\Box^{\circ}$. 
Then we consider operators defined on larger domains 
 of $\RR \times \RR$. 
We start with  a non-local  operator $\hR$ mentioned before, associated
to the Fourier transform, and certain
linear partial differential operators, which we term real-analytic Lerch differential operators.
 In \S3.1 we define the $\hR$-operator and
reformulate  the functional equations for the Lerch zeta function  in part I,
 using this operator. In \S3.2  
we define Lerch differential  operators and  in \S3.3 we determine  their  commutation relations
 on suitable function spaces, together with the $\hR$-operator.

%
%
%

\subsection{Local operators: Real-variable  differential  operators} \label{sec31}

We first study transformation of the Lerch zeta function under certain 
(real variable) linear
partial differential operators. {Recall from \S2.1 the operators}  
\beql{N201}
\hD_L^{+} = \frac{\partial}{\partial c} , 
\eeq
called the  ``raising operator'', and 
\beql{N202}
\hD_L^{-}= \frac{1}{2 \pi i}  \frac{\partial}{\partial a} + c
\eeq
{ called} the  ``lowering operator''. 
The names are suggested by the property that
for $\Re(s)>1$ the Lerch zeta function satisfies  the recurrence equations
\beql{N108}
(\hD_L^{+}\zeta)(s, a, c) = -s \zeta(s+1, a, c)
\eeq
and
\beql{N109}
(\hD_L^{-} \zeta)(s, a, c) = \zeta(s-1, a, c),
\eeq
which raise and lower the value of the $s$ parameter. 
These identities follow 
using its series definition \eqn{101}.

We combine the two operators to obtain  the 
{\em Lerch differential operator} $\hD_L$ { as in (\ref{DL}),} given by
\beql{N110}
\hD_L = \DLm \DLp =
 \left( \frac{1}{2 \pi i}  \frac{\partial}{\partial a} + c\right) \frac{\partial}{\partial c}.
\eeq
{It follows from (\ref{N108}) and (\ref{N109}) that} the Lerch zeta function is an  eigenfunction of the Lerch differential operator
for $\Re(s) >1$, satisfying 
\beql{N111}
(\hD_L \zeta)(s, a, c) = -s\zeta(s, a, c),
\eeq
as noted in \eqref{Eigenvalue}.


\begin{lemma}\label{lem31}
The operator $\hD_L$ 
takes a 
piecewise $C^{1,1}$-function
that is twisted-periodic on $\RR \times \RR$ to 
a twisted-periodic function  on $\RR \times \RR$.
\end{lemma}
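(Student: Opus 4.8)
The plan is to show that $\hD_L$ interacts correctly with the two twisted-periodicity functional equations, namely $F(a+1,c)=F(a,c)$ and $F(a,c+1)=e^{-2\pi i a}F(a,c)$. Since $\hD_L = \left(\frac{1}{2\pi i}\frac{\pt}{\pt a}+c\right)\frac{\pt}{\pt c}$ is a local differential operator, if $F$ is piecewise $C^{1,1}$ then $\hD_L F$ is well-defined (piecewise continuous) off the discontinuity grid, so the only real content is verifying that $\hD_L F$ again satisfies the two twisted-periodicity relations. First I would handle the $a$-periodicity, which is the easy case: differentiating $F(a+1,c)=F(a,c)$ in both $a$ and $c$ shows that all the relevant partials of $F$ are themselves periodic in $a$ with period $1$, and since the coefficient $c$ in the operator does not involve $a$, it follows immediately that $(\hD_L F)(a+1,c)=(\hD_L F)(a,c)$.

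The substantive step is the $c$-direction relation. Here I would start from $F(a,c+1)=e^{-2\pi i a}F(a,c)$ and carefully apply $\hD_L$ to both sides, tracking how the exponential twist $e^{-2\pi i a}$ interacts with the derivatives. Writing $G(a,c):=F(a,c+1)=e^{-2\pi i a}F(a,c)$, I would compute $\hD_L G$ two ways. On one hand, since $\hD_L$ is translation-invariant in $c$ (the coefficient $c$ appears, but the chain rule under $c\mapsto c+1$ must be tracked), $(\hD_L F)(a,c+1)$ is obtained by evaluating $\hD_L F$ at the shifted point. On the other hand, applying $\hD_L$ directly to $e^{-2\pi i a}F(a,c)$ produces extra terms from $\frac{1}{2\pi i}\frac{\pt}{\pt a}$ hitting the exponential: one factor $\frac{1}{2\pi i}\frac{\pt}{\pt a}e^{-2\pi i a}=-e^{-2\pi i a}$, and also the coefficient discrepancy between $c$ and $c+1$ in the second factor. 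The anticipated main obstacle is bookkeeping these cross terms so that they cancel exactly, confirming $(\hD_L F)(a,c+1)=e^{-2\pi i a}(\hD_L F)(a,c)$.

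Concretely I expect the computation to go as follows. Set $H = \frac{\pt}{\pt c}F$; twisted-periodicity in $c$ gives $\frac{\pt}{\pt c}F(a,c+1)=e^{-2\pi i a}H(a,c)$, i.e. $H(a,c+1)=e^{-2\pi i a}H(a,c)$, so $H$ is itself twisted-periodic in $c$ with the same twist. Then $\hD_L F = \frac{1}{2\pi i}\frac{\pt H}{\pt a}+cH$. Evaluating at $(a,c+1)$:
\begin{eqnarray*}
(\hD_L F)(a,c+1) &=& \frac{1}{2\pi i}\frac{\pt}{\pt a}\bigl(e^{-2\pi i a}H(a,c)\bigr)+(c+1)e^{-2\pi i a}H(a,c)\\
&=& e^{-2\pi i a}\Bigl(-H(a,c)+\frac{1}{2\pi i}\frac{\pt H}{\pt a}(a,c)+(c+1)H(a,c)\Bigr)\\
&=& e^{-2\pi i a}\Bigl(\frac{1}{2\pi i}\frac{\pt H}{\pt a}(a,c)+cH(a,c)\Bigr)= e^{-2\pi i a}(\hD_L F)(a,c),
\end{eqnarray*}
where the $-H$ term from differentiating the exponential cancels the $+1$ from the coefficient shift $c+1$. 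This is the key cancellation, and it confirms the $c$-twisted-periodicity.

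Finally I would note the regularity point: because $\hD_L$ is a second-order differential operator acting only within each smooth piece, and the twisted-periodicity conditions are preserved under the operator as shown, the image $\hD_L F$ is again piecewise $C^{1,1}$-type (in fact its smoothness degree drops by one derivative, but it remains piecewise continuous and satisfies the same twisted-periodicity relations). The discontinuity grid $(\ZZ\times\RR)\cup(\RR\times\ZZ)$ is mapped to itself under the relevant translations, so no new discontinuities are introduced off this grid. Hence $\hD_L F$ is twisted-periodic on $\RR\times\RR$, completing the proof. The computation is elementary; the only care needed is the exact matching of the twist factor, which I verified above.
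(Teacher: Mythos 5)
Your proposal is correct and follows essentially the same route as the paper's proof: the $a$-periodicity is dismissed as immediate, and the $c$-relation is verified by applying the operator to $e^{-2\pi i a}F(a,c)$ and observing that the $-1$ produced when $\frac{1}{2\pi i}\frac{\pt}{\pt a}$ hits the exponential cancels the $+1$ from the coefficient shift $c\mapsto c+1$. Your organization of the computation through $H=\frac{\pt F}{\pt c}$ is only a cosmetic repackaging of the paper's expanded calculation.
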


\begin{proof}
For piecewise  $C^{1,1}$-functions, the derivatives $\frac{\partial}{\partial a}$
and $\frac{\partial}{\partial c}$ commute. {(We allow discontinuities at the
edges of the pieces, with the boundaries being smooth curves; straight line boundaries
are used in the sequel.)}
By definition, a twisted-periodic function $F$ on $\RR \times \RR$ satisfies 
$F(a+1, c) = F(a, c)$ and  $F(a, c+1)= e^{-2\pi i a} F(a, c)$. We have to show these two relations with $F$ replaced by $D_LF$,
whenever partial derivatives are allowed.  Here $(D_L F)(a, c)$ means
$$ (D_L F)(a, c) = \big( \frac{1}{2\pi i} \frac{\partial}{\partial a} \frac{\partial}{\partial c} + c \ \frac{\partial}{\partial c} \big)(F(a, c)).$$
Note that $\frac{\partial}{\partial a} = \frac{\partial}{\partial (a+1)}$ and $\frac{\partial}{\partial c} = \frac{\partial}{\partial (c + 1)}$ by {the} chain rule.
It is easy to check that
$(\hD_L  F)(a+1, c) = (\hD_L F)(a, c)$. For the remaining relation,
using twisted-periodicity gives
\begin{eqnarray*}
(\hD_L F)(a, c+1) &=& \big( \frac{1}{2\pi i} \frac{\partial}{\partial a} \frac{\partial}{\partial c} + (c+1) \ \frac{\partial}{\partial c} \big)(F(a, c+1))\\
&=& \big( \frac{1}{2\pi i} \frac{\partial}{\partial a} \frac{\partial}{\partial c} + (c+1) \ \frac{\partial}{\partial c} \big)(e^{-2 \pi i a} F(a, c))\\
&=& e^{-2 \pi i a} \frac{1}{2\pi i} \frac{\partial}{\partial a} \frac{\partial}{\partial c} (F(a, c)) + e^{-2 \pi i a}(-1+ c+1) \ \frac {\partial}{\partial c}( F(a, c))\\
&=& e^{-2 \pi i a} (\hD_LF)(a, c), 
\end{eqnarray*}
as required.
\end{proof}

%
%
%

\subsection{Non-local operator: $\hR$-operator }\label{sec32}

{Recall from (\ref{R}) the  {\em $\hR$-operator} which} 
acts on suitable functions  on the  open square 
$\Box^{\circ}$ 
given by
\beql{N105}
\hR( f)(a, c)\,\,\,\,  = \quad e^{-2 \pi i  a c} f(1-c , a).\quad\quad
\eeq
This operator  satisfies $\hR^4=\hI$,
with
\begin{eqnarray}\label{N105b}
\hR^2 (f)(a,c) & = & e^{-2 \pi ia} f(1-a , 1-c), \\
\label{106}
\hR^3( f)(a,c) & = & e^{-2 \pi iac + 2 \pi ic} f (c, 1-a),\\
\hR^4( f)(a,c) & = & f (a, c).
\end{eqnarray}
{Note that the operator $J$ in Theorem \ref{th2002} is nothing but $\hR^2$, which we sometimes term}  the  {\em  reflection operator} since it 
relates the function values in each coordinate  about the point $a= \frac{1}{2}$, resp.  $c= \frac{1}{2}$.

These operators have   well-defined actions on continuous functions $C^{0}( (0,1)^2)$  defined in
the open unit square. They extend by closure to an operator action on 
(almost everywhere defined) $L^p$-functions  on the unit square,
for any $p \ge 1$.  In particular, the extended action on $L^2 ( \Box, da dc)$ is unitary.


\begin{lemma}\label{lem32}
The operator $\hR$ preserves the property of being (almost everywhere) twisted-periodic
on $\RR \times \RR$.
\end{lemma}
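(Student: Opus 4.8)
The plan is to verify directly that if $F$ is twisted-periodic on $\RR \times \RR$, then $G := \hR(F)$ satisfies the same two twisted-periodicity functional equations
$$G(a+1, c) = G(a, c), \qquad G(a, c+1) = e^{-2\pi i a} G(a, c).$$
Everything reduces to substituting the definition $G(a,c) = e^{-2\pi i a c} F(1-c, a)$ and carefully bookkeeping the exponential phase factors, using the fact that $\hR$ both transposes and reflects the two coordinates, so that a shift in one variable of $G$ triggers a shift in the \emph{other} coordinate slot of $F$.

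First I would treat the $a$-shift. Substituting gives $G(a+1, c) = e^{-2\pi i (a+1)c} F(1-c, a+1)$. Since the $+1$ has landed in the \emph{second} argument of $F$, I apply the $c$-direction twisted-periodicity of $F$, namely $F(x, y+1) = e^{-2\pi i x} F(x, y)$ with $x = 1-c$, to get $F(1-c, a+1) = e^{-2\pi i (1-c)} F(1-c, a)$. The accumulated phase is then $e^{-2\pi i (a+1)c}\, e^{-2\pi i(1-c)} = e^{-2\pi i a c}\, e^{-2\pi i c}\, e^{-2\pi i}\, e^{2\pi i c} = e^{-2\pi i a c}$, which is exactly the phase in $G(a,c)$; hence $G(a+1,c) = G(a,c)$.

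Next I would treat the $c$-shift. Here $G(a, c+1) = e^{-2\pi i a(c+1)} F(-c, a)$, and since the shift has now landed in the \emph{first} argument of $F$ (because $1-(c+1) = -c$), I use the plain period-one relation $F(x+1, y) = F(x, y)$ to write $F(-c, a) = F(1-c, a)$. The remaining phase factors as $e^{-2\pi i a(c+1)} = e^{-2\pi i a}\, e^{-2\pi i a c}$, so $G(a, c+1) = e^{-2\pi i a} G(a, c)$, as required.

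There is no genuine obstacle here beyond correct bookkeeping of the four phase factors and keeping straight which of the two twisted-periodicity relations of $F$ is invoked by each shift—an easy place to slip, precisely because $\hR$ mixes the coordinates. The only point worth a remark is the ``almost everywhere'' qualifier in the statement: since the affine map $(a,c) \mapsto (1-c, a)$ is measure-preserving, $G$ is well-defined almost everywhere wherever $F$ is, and the two identities above then hold for almost every $(a,c)$.
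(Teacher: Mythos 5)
Your proof is correct and follows essentially the same route as the paper's: a direct substitution of $G(a,c)=e^{-2\pi i ac}F(1-c,a)$, noting that the shift in each variable of $G$ lands in the opposite argument slot of $F$ (so the $a$-shift invokes the twisted relation $F(x,y+1)=e^{-2\pi i x}F(x,y)$ and the $c$-shift invokes plain periodicity $F(x+1,y)=F(x,y)$), followed by the same phase bookkeeping. The computations check out exactly as in the paper.
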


\begin{proof}
Given a twisted-periodic $F(a, c)$ on $\RR \times \RR$, set $g(a, c) = \hR(F)(a, c)= e^{-2 \pi i ac}F(1-c, a)$.
Then
\begin{eqnarray*}
g(a, c+1) &=& e^{- 2 \pi i a (c+1)} F( 1- (c+1), a)\\
&=& e^{- 2\pi i ac} e^{-2 \pi i a} F(-c, a)\\
&=& e^{- 2 \pi i a} (e^{-2 \pi i ac} F(1-c, a)) = e^{-2 \pi i a} g(a, c)
\end{eqnarray*}
where twisted-periodicity of $F$ was used on the third line. Then
\begin{eqnarray*}
g(a+1, c) &=& e^{- 2 \pi i (a+1)c} F( 1- c, a+1)\\
&=& e^{- 2\pi i ac} e^{-2 \pi i c} F(1-c, a+1)\\
&=&  e^{-2 \pi i ac} e^{- 2\pi i c}(e^{2 \pi i c} F(1-c, a)) =  g(a, c),
\end{eqnarray*}
where twisted-periodicity of $F$ was used on the third line.
\end{proof}
\begin{rem}\label{remark31}
{\em 
The $\hR$-operator can be extended to act  pointwise on  functions defined almost everywhere
on $\Box^{\circ}$, with the same rule \eqref{N105}. That is, we can allow singularities on a
finite set of horizontal and vertical lines. 
It can also be extended to the domain $\RR \times \RR$, with the same rule \eqref{N105}, acting on functions
satisfying a twisted-periodic relation, see Lemma \ref{le45}.
}
\end{rem}

%
%
%

\subsection{Commutation relations} \label{sec33}

Let $C^{1,1}(\Box^{\circ})$
denote the complex vector space of jointly continuously differentiable functions $f(a,c)$  
on the open unit square $\Box^{\circ}$ 
whose
mixed second partials  are continuous functions and satisfy
$$\frac{\partial^2 f}{\partial a\partial c}(a, c) =
\frac{\partial^2 f}{\partial c\partial a}(a, c).
$$

The following result describes commutation relations
of these operators among themselves and with the $\hR$-operator.


\begin{lemma}\label{le301}
On the space $\rC^{1,1} (\Box^{\circ})$ the 
following properties hold.
\begin{itemize}
\item[(i)]
The operators $\DLp = \frac{\pt}{\pt c}$ and $\DLm= 
\frac{1}{2 \pi i} \frac{\pt}{\pt a} + c$ satisfy the commutation relation 
\beql{303}
\DLp \DLm - \DLm \DLp= I ~,
\eeq
{ where $I$ denotes the identity operator.}
\item[(ii)]
The operator $\hR f (a,c) := e^{- 2 \pi i ac} f(1-c,a)$ leaves $\rC^{1,1} (\Box^{\circ})$ 
invariant and satisfies
\begin{eqnarray}\label{304}
\DLp \hR  &= &- 2 \pi i ~\hR \DLm ~, \\
\label{305}
\DLm \hR &= &\frac{1}{2 \pi i} \hR \DLp ~.
\end{eqnarray}
\item[(iii)]
The operator $\hD_L = \DLm \DLp$ satisfies
\beql{306}
\hD_L \hR + \hR \hD_L = - \hR~ .
\eeq
In particular, $\hD_L$ and $\hR^2$ commute: 
\beql{307}
\hD_L \hR^2 = \hR^2 \hD_L ~.
\eeq
\end{itemize}
\end{lemma}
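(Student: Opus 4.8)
The plan is to verify (i) by a direct two-line commutator computation, to establish the two intertwining relations in (ii) by applying each side to a test function and matching terms after a careful chain-rule expansion, and then to deduce (iii) purely formally from (i) and (ii). Throughout I write $f_1$ and $f_2$ for the partial derivatives of $f$ with respect to its first and second arguments.

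For (i), I would apply both compositions to $f \in \rC^{1,1}(\Box^{\circ})$. Expanding $\DLp\DLm f=\frac{\pt}{\pt c}\big(\frac{1}{2\pi i}\frac{\pt f}{\pt a}+cf\big)$ produces a Leibniz term $f$ together with $\frac{1}{2\pi i}\frac{\pt^2 f}{\pt c\,\pt a}+c\frac{\pt f}{\pt c}$, while $\DLm\DLp f=\frac{1}{2\pi i}\frac{\pt^2 f}{\pt a\,\pt c}+c\frac{\pt f}{\pt c}$. Subtracting and invoking the defining equality of mixed partials on $\rC^{1,1}(\Box^{\circ})$ cancels everything except $f$, giving $\DLp\DLm-\DLm\DLp=I$.

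The substance of the argument is (ii). Setting $g=\hR f$, so that $g(a,c)=e^{-2\pi iac}f(1-c,a)$, I would differentiate $g$ directly. In $\frac{\pt g}{\pt c}$ the exponential contributes a factor $-2\pi ia$ and the chain rule, applied to the first slot $1-c$, contributes a factor $-1$; comparing with $-2\pi i\,\hR\DLm f$ — where the $+c$ multiplier inside $\DLm$ has become $+a$ after the argument-swap performed by $\hR$ — the two expressions agree term by term, yielding $\DLp\hR=-2\pi i\,\hR\DLm$. For the second relation I would compute $\DLm g=\frac{1}{2\pi i}\frac{\pt g}{\pt a}+cg$: differentiating the exponential in $a$ produces a term $-c\,e^{-2\pi iac}f(1-c,a)$ that cancels exactly against the $cg$ term, leaving only $\frac{1}{2\pi i}e^{-2\pi iac}f_2(1-c,a)$, which is precisely $\frac{1}{2\pi i}\hR\DLp f$. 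I expect this bookkeeping to be the main obstacle: the reflection-and-swap $(a,c)\mapsto(1-c,a)$ built into $\hR$ must be tracked consistently so that the chain-rule signs and the relabelling of the $+c$ coefficient come out correctly, and it is the clean cancellation of the two $c$-terms that makes the second relation work.

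Finally, (iii) follows algebraically. Writing $\hD_L=\DLm\DLp$ and applying the relations of (ii) in turn gives $\hD_L\hR=\DLm(\DLp\hR)=-2\pi i\,\DLm\hR\DLm=-\hR\DLp\DLm$, and then rewriting $\DLp\DLm=\hD_L+I$ by means of (i) converts this to $-\hR(\hD_L+I)=-\hR\hD_L-\hR$, i.e.\ the anticommutation relation $\hD_L\hR+\hR\hD_L=-\hR$. Applying this relation twice then gives $\hD_L\hR^2=(-\hR\hD_L-\hR)\hR=-\hR(\hD_L\hR)-\hR^2=-\hR(-\hR\hD_L-\hR)-\hR^2=\hR^2\hD_L$, establishing $\hD_L\hR^2=\hR^2\hD_L$.
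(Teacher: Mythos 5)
Your proposal is correct and follows essentially the same route as the paper: a direct Leibniz computation for (i), term-by-term verification of the two intertwining relations in (ii) by differentiating $e^{-2\pi iac}f(1-c,a)$ and tracking the swap $(a,c)\mapsto(1-c,a)$, and the purely algebraic derivation of (iii) from (i) and (ii), including the same double application of the anticommutation relation to get $\hD_L\hR^2=\hR^2\hD_L$. All the sign and chain-rule bookkeeping you describe matches the paper's calculation.
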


\begin{proof}
These results follow by direct  calculation. 
For (i), we have 
$$
\DLp \DLm  = \frac{\pt}{\pt c} ( \frac{1}{2 \pi i} \frac{\pt}{\pt a} + c ) 
= \frac{1}{2 \pi i} \frac{\pt}{\pt a} \frac{\pt}{\pt c} + 
c \frac{\pt}{\pt c} + I= \DLm \DLp +I. 
$$

For (ii), we use $\hD_1$, $\hD_2$ to denote partial derivatives with respect to 
the first and second variable, respectively, in order to avoid
ambiguity under the action of $R$, which interchanges the variables.
Let
$$
h(a,c) := ( \DLm f ) (a,c) = \left( \frac{1}{2 \pi i} 
\hD_1f  + cf \right)(a,c) ~.
$$
Then
\begin{eqnarray*}
(\hR \DLm f ) (a,c) & = & (\hR h) (a,c) = e^{-2 \pi iac} h(1-c,a) \\
& = & e^{-2 \pi iac} 
\left( \frac{1}{2 \pi i} (\hD_1 f) (1-c,a) + a f (1-c, a) \right) ~.
\end{eqnarray*}
On the other hand, let
$$g(a,c) = f(1-c,a)$$
so that
$(\hR f) (a,c) = e^{-2 \pi iac} g(a,c) ~.$
We have
\begin{eqnarray*}
(\DLp \hR f) (a,c) & = & - 2 \pi ia e^{-2 \pi iac} g(a,c) + 
e^{-2 \pi iac} (\hD_2 g) (a,c) \\
& = & -2 \pi ia 
e^{-2 \pi iac} f(1-c,a) - e^{-2 \pi iac} (\hD_1 f) (1-c,a) \\
& = & -2 \pi i (\hR \DLm f ) (a,c) ~.
\end{eqnarray*}
In other words,
$\DLp \hR = - 2 \pi i \hR \DLm~, $ which is \eqref{304}.

Next, we have
\begin{eqnarray*}
(\DLm \hR f) (a,c) & = & \frac{1}{2 \pi i}
\frac{\pt}{\pt a} (\hR f) (a,c) + c(\hR f) (a,c) \\
& = & \frac{1}{2 \pi i} (-2 \pi ic e^{-2 \pi iac} f(1-c,a) 
+ e^{- 2 \pi iac} \hD_1 g(a,c)) + ce^{-2 \pi iac} f(1-c,a) \\
& = & \frac{1}{2 \pi i} e^{-2 \pi iac} \hD_2 f (1-c,a) =
\frac{1}{2 \pi i} (\hR \DLp f) (a,c) ~.
\end{eqnarray*}
In other words,
$\DLm \hR = \frac{1}{2 \pi i} \hR \DLp ~,$ which is \eqref{305}.

For (iii) we use  (i) and (ii) to obtain
\begin{eqnarray*}
\hD_L \hR & = & \DLm \DLp \hR = - 2 \pi i \DLm \hR \DLm
= - \hR \DLp \DLm \\
& = & - \hR (\DLm \DLp + I ) =  - \hR \hD_L - \hR ~,
\end{eqnarray*}
which is \eqref{306}. This identity yields
$$
\hD_L \hR^2 =  (-\hR \hD_L -\hR) \hR = - \hR(\hD_L \hR + \hR) =
-\hR (-\hR \hD_L) = \hR^2 \hD_L~,
$$
which is \eqref{307}.
\end{proof}

%
%

\section{Two-variable Hecke Operators}\label{sec4}
\setcounter{equation}{0}

The individual terms in the sums defining two-variable Hecke operators \eqref{102} 
are  operators that act on the domain by affine changes of variable.
They are compositions of $a$-variable operators of form
$$
\hT_{k, m}(f) (a, c) :=  f( \frac{a+k}{m}, c),
$$
which map the unit square into itself, together with the 
 the $c$-variable dilation operator
$$
D_m^c(f)(a, c) := f(a,mc),
$$
which maps the unit square outside itself. 
A main problem
in obtaining a well-defined action of the two-variable Hecke operators \eqref{102} 
is to accommodate the ``dilation'' action in the $c$-variable,  which
expands the domain. In the $c$ variable the domain must therefore include at least
$\RR_{>0}$ or $\RR_{<0}$. 
The additive action in the $a$-variable
requires to handle all $\hT_{k,m}$ for $0 \le k < m$  a ``non-local'' definition valid over an interval of width at least one.

The viewpoint of this paper is start with  functions defined (almost everywhere) on the  unit square $\Box= [0,1] \times [0,1]$,
extend them to functions defined on the plane $\RR \times \RR$ by imposing  twisted-periodicity conditions:
$f(a+1, c) = f(a, c)$ and $f(a, c+1) = e^{-2 \pi i a} f(a, c)$, and use the extended functions to define an
action of the two-variable Hecke operators on functions on the unit square; the resulting action is linear and preserves
twisted-periodicity conditions on the functions. 

We would like to accommodate  functions with  discontinuities 
of the type that naturally appear in connection with the Lerch zeta function
in the  real-analytic framework, which occur for integer values of $a$ and/or $c$. 
We must also deal with  the problem that the  action of $\hT_m$ changes the
location of the discontinuities.
We will introduce  appropriate  function spaces that are  closed under the Hecke operator action
allowing a finite (but variable) number of discontinuities in horizontal and vertical
directions. These operators also make sense on the Banach space 
 $L^1( \Box, da\, dc)$ and on the Hilbert space $L^2(\Box, da\,dc)$.

%
%

\subsection{Twisted-periodic function spaces}\label{sec41}

In the  real-variables framework treated in this paper we take as a function
space the set of   piecewise-continuous
functions with a finite number of pieces defined in the open unit square   $\Box^{\circ}$,
allowing discontinuities on horizontal and vertical lines, and permitting the number
of discontinuities to depend on the function.

\begin{defi}~\label{Nde31}
{\em
 (i) The {\em  twisted-periodic function space} $\sP(\Box^{\circ})$ 
is the  complex vector space that consists   of equivalence classes of functions
$f: \Box^{\circ} \to \CC$
which are piecewise continuous, with discontinuities, if any,  located
along a finite number of horizontal lines at rational coordinates $c= \frac{j}{d}, ~0 < j <d$,
for some integer $d \ge 1$, called an {\em admissible denominator} for the function. 
We then extend these functions  to  functions
 $f: (\RR \smallsetminus \ZZ) \times (\RR \smallsetminus \ZZ)$,
by imposing  the twisted-periodicity conditions
\begin{eqnarray} 
f(a+1, c) & = & f(a, c) \label{N311a} \\
f(a~~, c+1) & =& e^{-2 \pi i a} f(a, c) \label{N311b}
\end{eqnarray}
and  denote the set of such equivalence classes of functions $\sP(\RR \times \RR)$.
Here two functions with admissible denominators $d$, $d'$ are considered {\em equivalent}, if their
values coincide on  the set 
$$
\sS_{dd'} := \{ (a, c) \in (\RR \smallsetminus \ZZ) \times (\RR \smallsetminus \frac{1}{d d'}\ZZ)\}.
$$

(ii) The smallest lattice $\frac{1}{d}\ZZ$  on which discontinuities occur (for $d \ge 1$) 
will be called the 
{\em support lattice} of the function, and the minimal value $d$ can be called the
{\em conductor} of the function.}
\end{defi}

We allow  ``functions'' in $\sP(\RR \times \RR)$ to  be undefined on the set of horizontal lines with
coordinates $c = \frac{j}{d}$ for all $j \in \ZZ,$ and don't compare their values at such points.
Note that a function having {admissible} denominator $d$ can be regarded also a function with { admissible} denominator
$kd$ for any integer $k \ge 1$. Aside from this ambiguity, any function in $\sP(\RR \times \RR)$
is completely determined by its values in $\sP(\Box^{\circ})$, { using the twisted- periodicity conditions. 
More precisely, the equivalence relation on  ``functions" in $\sP(\RR \times \RR)$
calls functions equivalent  whose values agree outside of a set of measure zero;
the space   $\sP(\RR \times \RR)$ has a well-defined vector space structure on equivalence classes. }

%
%

\begin{prop}~\label{Nle31} {\rm (Real Variable Hecke Operators)}
The two-variable Hecke operators
$$
\hT_m(f)(a, c) = \frac{1}{m} {\sum_{k=0}^{m-1} f \big(\frac{a+k}{m}, mc \big) }
$$
act on twisted-periodic functions in  $ \sP(\RR \times \RR)$, 
i.e. they preserve twisted-periodicity. On this function space
they satisfy
\beql{N315}
\hT_m  \hT_n = \hT_n   \hT_m = \hT_{mn}.
\eeq
\end{prop}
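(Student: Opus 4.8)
The plan is to verify the two claimed properties directly: first that $\hT_m$ preserves twisted-periodicity, so that it is a well-defined operator on $\sP(\RR \times \RR)$, and second that the composition law \eqref{N315} holds. For the first part, I would take $f \in \sP(\RR\times\RR)$, set $g := \hT_m(f)$, and check the two defining relations \eqref{N311a} and \eqref{N311b} for $g$. The periodicity relation $g(a+1,c)=g(a,c)$ should be essentially immediate: replacing $a$ by $a+1$ in $\frac{1}{m}\sum_{k=0}^{m-1} f(\frac{a+k}{m}, mc)$ shifts the summation index $k \mapsto k+1$, and since $f$ is periodic in its first argument with period $1$, the term with index $m$ wraps around to the index-$0$ term, leaving the sum unchanged. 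The twisted relation in $c$ requires using \eqref{N311b}: computing $g(a, c+1)=\frac{1}{m}\sum_{k=0}^{m-1} f(\frac{a+k}{m}, mc+m)$ and applying the $c$-twisting $m$ times picks up a factor $e^{-2\pi i \frac{a+k}{m} \cdot m} = e^{-2\pi i (a+k)} = e^{-2\pi i a}$ (since $k$ is an integer), which is independent of $k$ and factors out of the sum to give exactly $e^{-2\pi i a} g(a,c)$.

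For the composition law, the plan is to expand $\hT_m \hT_n (f)$ and reindex. Writing
\[
\hT_m(\hT_n f)(a,c) = \frac{1}{m}\sum_{k=0}^{m-1} (\hT_n f)\left(\tfrac{a+k}{m}, mc\right)
= \frac{1}{mn}\sum_{k=0}^{m-1}\sum_{j=0}^{n-1} f\left(\tfrac{\frac{a+k}{m}+j}{n}, n m c\right),
\]
I would simplify the inner first argument to $\frac{a+k+jm}{mn}$ and observe that as $k$ ranges over $\{0,\dots,m-1\}$ and $j$ over $\{0,\dots,n-1\}$, the quantity $k+jm$ ranges over a complete residue system $\{0,1,\dots,mn-1\}$. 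Hence the double sum collapses to $\frac{1}{mn}\sum_{\ell=0}^{mn-1} f(\frac{a+\ell}{mn}, (mn)c) = \hT_{mn}(f)(a,c)$. The symmetry $\hT_m\hT_n = \hT_n\hT_m$ then follows because the resulting expression $\hT_{mn}$ is symmetric in $m$ and $n$; alternatively one reruns the same reindexing with the roles of $m,n$ swapped and notes $j+kn$ likewise exhausts $\{0,\dots,mn-1\}$.

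The only genuine subtlety, rather than the main obstacle, is bookkeeping around the equivalence-class structure of $\sP(\RR\times\RR)$: I must confirm that $\hT_m$ sends equivalent functions to equivalent functions and respects admissible denominators, so that the operator descends to equivalence classes. Concretely, if $f$ has admissible denominator $d$ with discontinuities along $c=\frac{j}{d}$, then $\hT_m(f)$ involves $f(\cdot, mc)$, whose discontinuities occur where $mc \in \frac{1}{d}\ZZ$, i.e. along $c \in \frac{1}{md}\ZZ$, so $md$ is an admissible denominator for the image; this is finite and of the required rational form, keeping us inside $\sP(\RR\times\RR)$. I would briefly note this to justify well-definedness on equivalence classes, after which the algebraic identities above, being equalities of the defining formulas wherever all terms are defined, hold on the appropriate co-finite set of horizontal lines and therefore as identities in $\sP(\RR\times\RR)$. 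The core of the proof is thus two short reindexing computations; the infrastructure of twisted-periodicity (part one) is what makes the Hecke action well-defined in the first place.
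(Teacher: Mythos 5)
Your proposal is correct and follows essentially the same route as the paper: the composition law is established by exactly the paper's reindexing $k+jm \leftrightarrow \ell \in \{0,\dots,mn-1\}$, and the denominator bookkeeping matches the paper's observation that $\hT_m$ sends admissible denominator $d$ to $md$. Your explicit verification of the two twisted-periodicity relations for $\hT_m(f)$ (the index wrap-around in $a$ and the factor $e^{-2\pi i(a+k)}=e^{-2\pi i a}$ in $c$) is a detail the paper leaves implicit, and it is carried out correctly.
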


\begin{proof} 
If $f(a, c) \in  { \sP(\RR \times \RR)}$, with admissible denominator $d$, then
each $f( \frac{a+k}{m}, mc) \in { \sP(\RR \times \RR)}$ with admissible denominator $md$, hence
$\hT_m(f) \in { \sP(\RR \times \RR)}$ with admissible denominator $md$. 

It remains to verify \eqn{N315}. Assuming that $f (a, c)$ is twisted-periodic 
with discontinuities at level  denominator $d$, the following
calculation is well-defined with denominator $mnd$, 
\begin{eqnarray}
\hT_m (\hT_n (f) )(a,c) & = & \frac{1}{m} \sum_{k=0}^{m-1} (\hT_n( f))
\left( \frac{a+k}{m} , mc \right)  \nonumber \\
& = & \frac{1}{m} \sum_{k=0}^{m-1} \left(  \frac{1}{n}
\sum_{l=0}^{n-1} f \big( \frac{\frac{a + k}{m} + l}{n} , n(mc) \big) \right)   \nonumber \\
& = & \frac{1}{mn} \sum_{k=0}^{m-1} 
\left( \sum_{l=0}^{n-1} f \big(\frac{a+k+lm}{mn} , mnc \big) \right)  \nonumber \\
& = & \hT_{mn} (f ) (a,c)   \label{N316} \,.
\end{eqnarray}
The equality $\hT_n(\hT_m)(f)= \hT_{mn}(f)$ follows similarly.
\end{proof}

%
%

\begin{rem}\label{rem43}
{\em
One may also study 
two-variable Hecke operators $\hT_m$ and $\hR$ acting as bounded operators on the 
 function spaces $L^1(\Box, da\, dc)$ or
$L^2(\Box, da \,dc)$. Here we view the functions as extended to (most of) $\RR \times \RR$
by twisted-periodicity to define the Hecke operator action. 
We note that  the twisted-periodicity conditions in Definition \ref{Nde31} preserve both the
$L^1$-norm and the $L^2$-norm of functions on unit squares having  integer lattice points as corners.
}
\end{rem}

%
%

\subsection{$\hR$-operator  conjugates of Hecke operators}\label{sec42}

Conjugation by powers of $\hR$ of the
Hecke operators gives rise to four related 
families of two-variable Hecke operators. The first family consists of  
the operators $\{\hT_m: m \ge 1\}$ in \eqn{102}
and the remaining three families are
\beql{N331}
\hS_m  :=  \hR \hT_m \hR^{-1} ~,~{\hT}_m^{\vee}  :=  \hR^2 \hT_m \hR^{-2},~~ 
\mbox{and}~~{\hS}_m^{\vee}  :=  \hR^3 \hT_m \hR^{-3}.
\eeq
Their actions are given by 
\begin{eqnarray}
{\hS}_m f (a,c) & := & \frac{1}{m} \sum_{k=0}^{m-1} 
e^{2 \pi i ka} f \left( ma, \frac{c+k}{m} \right); \label{hS}\\
{\hT}_m^{\vee} f (a,c) & := &
\frac{1}{m} \sum_{k=0}^{m-1} 
e^{2\pi i (\frac{(1-m)a+k}{m})}f \left( \frac{a+k}{m} , 1 + m(c - 1) \right); \label{hT} \\
{\hS}_m^{\vee} f (a,c) & := & \frac{1}{m} \sum_{k=0}^{m-1} 
e^{2 \pi i (m-(k+1) )a}f \left( 1 + m(a - 1), \frac{c+m-(k+1)}{m} \right). \label{hS-vee}
\end{eqnarray}
Each of these families of operators leaves invariant a line passing
through one side of the square $\Box$, 
 in the sense that the above definitions   (formally) make
 sense as  one-variable operators,  when restricted to
 the invariant line.  For  
$\hT_m$,  ${\hT}_m^{\vee}$, ${\hS}_m$  and  ${\hS}_m^{\vee}$  the
invariant lines are  $c=0$,  $c=1$, $a=0$ and $a=1$, respectively.

In order to have a suitable domain inside $\RR \times \RR$ on which all four
families of operators are simultaneously defined we must use essentially
all of $\RR \times \RR$, because between them the operators are
expansive in the positive and negative $a$-directions  and $c$-directions.
We make the following definition, allowing functions with
discontinuities, for the reasons given above.

\begin{defi}~\label{Nde43}
{\em
  The {\em  extended twisted-periodic function space} $\sP^{\ast}(\RR \times \RR)$ 
is a  complex vector space of  equivalence classes of functions
$f:\Box^{\circ} \to \CC$
which are piecewise continuous, having the property that:
there is a positive
integer $d$ (which may depend  on the function) such that all discontinuities of $f(a, c)$
occur only in the following regions:
\begin{itemize}
\item
 along  horizontal lines at rational coordinates $c= \frac{j}{d}$, $0 \le  j < d$,
 \item
along vertical lines at rational coordinates $a= \frac{k}{d}$, $0 \le k < d$. 
\end{itemize}
We call any such value $d$ an {\em admissible denominator}  for the function. (The functions need not be well-defined along the lines of discontinuity.)

(i) These functions are  extended to  functions
 $f: (\RR \smallsetminus \ZZ) \times (\RR \smallsetminus \ZZ) \to \CC$,
by imposing  the twisted-periodicity conditions
\begin{eqnarray*} 
f(a+1, c) & = & f(a, c)\\
f(a~~, c+1) & =& e^{-2 \pi i a} f(a, c).
\end{eqnarray*}

(ii) Two functions with admissible denominators $d$, $d'$ are considered {\em equivalent}, if their
values coincide on  the set 
$$
\sS_{dd'} := \{ (a, c) \in (\RR \smallsetminus \frac{1}{dd'} \ZZ) \times (\RR \smallsetminus \frac{1}{d d'}\ZZ)\}.
$$
We denote the set of such equivalence classes of functions by $\sP^{\ast}(\RR \times \RR)$.

(iii) Given a function in an equivalence class in $\sP^{\ast}(\RR \times \RR)$,
 any integer 
 $d \ge 1$ for which all discontinuities of some function equivalent to $f(a, c)$ 
are on the lattice $\frac{1}{d}\ZZ^2$  will be called a
{\em support lattice} of the function. The minimal value $d$ will be called the
{\em conductor}  of the function\footnote{One can define a refined notion
which allows discontinuities with denominator $d_1$ in the $c$-direction and
$d_2$ in the $a$-direction. Here  $d$ is the least common multiple $d= [ d_1, d_2]$.
One can also define a refined notation $(d_1, d_2)$ of {\em conductor} in the variables separately.}
}.
\end{defi}

The individual operators $f(a, c) \mapsto \frac{1}{m} f(\frac{a+k}{m}, mc)$ making up the two-variable Hecke operator $\hT_m$
leave the space $\sP^{\ast}(\RR \times \RR)$ invariant, and map 
a function in $\sP^{\ast}(\RR \times \RR)$   with conductor $d$ to one with conductor dividing $md$.

One can extend the  ``real-variables'' framework 
 to all four of these families of Hecke operators, acting on the full twisted-periodic function space, using
conjugation by $\hR^j$.  This assertion is justified by the following result,
in which we regard the operator $\hR$ as acting on
functions with domain values $(a, c) \in \RR \times \RR$ by \eqref{N105}.\smallskip

%
%

\begin{lemma}~\label{le45} 
The extended twisted-periodic function space $\sP^{\ast}(\RR \times \RR)$ 
is invariant under the action of the 
operator $\hR$.
\end{lemma}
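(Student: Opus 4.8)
The plan is to verify directly that if $f \in \sP^{\ast}(\RR \times \RR)$, then $\hR(f)$ again satisfies the two twisted-periodicity conditions and has discontinuities confined to a rational lattice of horizontal and vertical lines, so that it too lies in $\sP^{\ast}(\RR \times \RR)$. The twisted-periodicity half of this has already been established in Lemma \ref{lem32}, where it is shown that $g(a,c) := \hR(F)(a,c) = e^{-2\pi i ac} F(1-c, a)$ satisfies $g(a+1, c) = g(a,c)$ and $g(a, c+1) = e^{-2\pi i a} g(a, c)$ whenever $F$ is twisted-periodic. So the substantive remaining point is to track what $\hR$ does to the location of discontinuities, since $\sP^{\ast}(\RR \times \RR)$ — unlike the space $\sP(\RR \times \RR)$ handled implicitly by Lemma \ref{lem32} — permits discontinuities along both horizontal and vertical lines, and $\hR$ interchanges the two coordinate directions.

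First I would take a representative $f$ with admissible denominator $d$, so that all its discontinuities lie on the lattice $\frac{1}{d}\ZZ^2$, i.e.\ along the vertical lines $a = \frac{k}{d}$ and horizontal lines $c = \frac{j}{d}$. The defining formula $\hR(f)(a,c) = e^{-2\pi i ac} f(1-c, a)$ shows that the value of $\hR(f)$ at $(a,c)$ depends on $f$ evaluated at the point $(1-c, a)$; the prefactor $e^{-2\pi i ac}$ is a globally real-analytic (hence everywhere continuous, nonvanishing) function and introduces no discontinuities of its own. Therefore $\hR(f)$ can be discontinuous at $(a,c)$ only where $f$ is discontinuous at $(1-c, a)$, that is, where either $1 - c \in \frac{1}{d}\ZZ$ or $a \in \frac{1}{d}\ZZ$. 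The first condition is equivalent to $c \in \frac{1}{d}\ZZ$ (since $\frac{1}{d}\ZZ$ is invariant under $x \mapsto 1 - x$, as $1 \in \frac{1}{d}\ZZ$), and the second is just $a \in \frac{1}{d}\ZZ$. Hence the discontinuities of $\hR(f)$ are again confined to horizontal lines $c = \frac{j}{d}$ and vertical lines $a = \frac{k}{d}$, so $d$ remains an admissible denominator for $\hR(f)$, and $\hR(f)$ is piecewise continuous with a finite number of pieces in $\Box^{\circ}$.

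Combining these two observations, $\hR(f)$ is twisted-periodic and has discontinuities only on the lattice $\frac{1}{d}\ZZ^2$, which is exactly the defining requirement for membership in $\sP^{\ast}(\RR \times \RR)$. I would close by checking that $\hR$ respects the equivalence relation of Definition \ref{Nde43}: two equivalent representatives agree off a set of measure zero (a finite union of coordinate lines), and since $\hR$ is given by a measure-preserving affine change of variables $(a,c) \mapsto (1-c, a)$ times a nonvanishing factor, it carries measure-zero sets to measure-zero sets, so equivalent inputs yield equivalent outputs and $\hR$ is well defined on equivalence classes. The main obstacle here is conceptual rather than computational: one must be careful that $\hR$ swaps the roles of the horizontal and vertical discontinuity lines and involves the reflection $c \mapsto 1-c$, so it is essential that the admissible lattice $\frac{1}{d}\ZZ$ be symmetric both under translation by $1$ and under reflection about rational half-integers — this is what guarantees the discontinuity set is preserved rather than merely rescaled. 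Everything else is routine bookkeeping.
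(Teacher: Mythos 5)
Your proof is correct and follows essentially the same route as the paper, which simply cites Lemma \ref{lem32} for the twisted-periodicity part and leaves the rest implicit. The additional bookkeeping you supply — that the map $(a,c)\mapsto(1-c,a)$ carries the discontinuity lattice $\frac{1}{d}\ZZ^2$ to itself because $\frac{1}{d}\ZZ$ is stable under $x\mapsto 1-x$, and that $\hR$ respects the equivalence relation — is exactly the routine verification the paper omits, and it is carried out correctly.
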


\begin{proof}
This follows from Lemma \ref{lem32}. 
\end{proof}

%
%

\subsection{Commutation relations of two-variable Hecke operators}\label{sec43}

We now show some surprising commutation relations; on
suitable function spaces all four sets of
Hecke operators mutually commute, despite their non-commutativity with the $\hR$-operator.
%
%

\begin{lemma}\label{le46}
Let $f$ be a function in the {extended} twisted-periodic function vector space $\sP^{\ast}(\RR \times \RR)$,
and {let} $d \ge 1$ be an integer.
 
(1) For $m \ge 1$, 
$$
\hS_m \circ \hT_{dm} (f)(a, c) = \frac{1}{m} \hT_d( f)(a, c).
$$

(2) For  $d=1$ and all $m \ge 1$, 
$$
\hS_m \circ \hT_m(f) (a, c)  { ~= \hT_m \circ \hS_m(f) (a, c)
=\frac{1}{m} f(a,c) }.
$$
Here $\hT_m$ is invertible and $\hS_m = \frac{1}{m}(\hT_m)^{-1}$, and $S_m$ and $T_m$ commute. 
\end{lemma}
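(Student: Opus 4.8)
The plan is to prove both parts by direct computation, expanding the definitions of $\hS_m$ in \eqref{hS} and $\hT_m$ in \eqref{102}, and carefully tracking the twisted-periodicity relations \eqref{N311a}--\eqref{N311b} to simplify the resulting double sums. First I would establish part (1), the more general identity, since part (2) will follow from it by specialization. Writing out the composition, I have
\[
\hS_m \circ \hT_{dm}(f)(a,c) = \frac{1}{m}\sum_{k=0}^{m-1} e^{2\pi i k a}\, (\hT_{dm} f)\!\left(ma, \frac{c+k}{m}\right),
\]
and then expanding the inner $\hT_{dm}$ gives a double sum over indices $k$ (from $0$ to $m-1$) and $\ell$ (from $0$ to $dm-1$), with argument $\left(\frac{ma+\ell}{dm}, dm\cdot\frac{c+k}{m}\right) = \left(a + \frac{\ell}{dm}, d(c+k)\right)$. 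The key maneuver is to use the exponential phase $e^{2\pi i k a}$ together with the $c$-periodicity $f(a, c+1) = e^{-2\pi i a} f(a,c)$ to collapse the sum over $k$.

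The main steps, in order, are as follows. I would first simplify the second argument $d(c+k) = dc + dk$ by applying the twisted-periodicity in the $c$-variable $dk$ times, each application producing a factor $e^{-2\pi i (\text{first argument})}$; since the first argument is $a + \frac{\ell}{dm}$, this yields a total factor $e^{-2\pi i dk(a + \ell/(dm))} = e^{-2\pi i dk a}\, e^{-2\pi i k\ell/m}$. Combining with the phase $e^{2\pi i k a}$ already present, the $a$-dependent exponentials in $k$ should telescope or produce an orthogonality relation. I expect that after reorganizing, the sum over $k$ of the remaining phases acts as a discrete delta forcing a congruence condition on $\ell \pmod m$, leaving exactly the terms that reassemble into $\frac{1}{m}\hT_d(f)(a,c)$. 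The arithmetic of the phase cancellation is the crux, and I would keep close watch on the exponential bookkeeping so that the geometric-sum identity $\sum_{k=0}^{m-1} e^{2\pi i k r/m} = m$ if $m \mid r$ and $0$ otherwise is applied correctly.

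For part (2), setting $d=1$ in part (1) immediately gives $\hS_m \circ \hT_m(f) = \frac{1}{m}\hT_1(f) = \frac{1}{m}f$, since $\hT_1$ is the identity operator. The reverse composition $\hT_m \circ \hS_m(f) = \frac{1}{m}f$ would be verified by the analogous direct computation (or, alternatively, observed from the fact that a one-sided inverse of a bounded operator on these spaces, together with the matching $\hT_m \hT_n = \hT_{mn}$ structure, forces two-sidedness); from these two identities the invertibility of $\hT_m$, the formula $\hS_m = \frac{1}{m}(\hT_m)^{-1}$, and the commutation of $\hS_m$ with $\hT_m$ all follow formally. I expect the genuine obstacle to lie entirely in part (1): namely, justifying that the twisted-periodicity relations may be applied termwise to the piecewise-continuous representatives in $\sP^{\ast}(\RR\times\RR)$ without running afoul of the lines of discontinuity, and then pushing the phase cancellation through cleanly. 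The bookkeeping is routine but error-prone, so I would verify the small cases $m=1$ and $m=2$ explicitly as a sanity check before presenting the general index manipulation.
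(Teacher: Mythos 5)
Your overall strategy is exactly the paper's: expand the composition $\hS_m \circ \hT_{dm}$, push the second argument $d(c+k)$ back into the fundamental strip using twisted-periodicity (picking up an exponential phase), and then collapse the $k$-sum via the root-of-unity orthogonality relation. However, there is a concrete arithmetic error at the step you yourself identify as the crux. The first argument of the inner term is
$$\frac{ma+\ell}{dm} = \frac{a}{d} + \frac{\ell}{dm},$$
not $a + \frac{\ell}{dm}$ as you wrote. This matters: applying $c$-periodicity $dk$ times produces the phase $e^{-2\pi i\, dk\,(\frac{a}{d} + \frac{\ell}{dm})} = e^{-2\pi i k a}\, e^{-2\pi i k\ell/m}$, whose first factor cancels exactly against the $e^{2\pi i k a}$ coming from the definition of $\hS_m$, leaving the pure orthogonality sum $\sum_{k=0}^{m-1} e^{-2\pi i k\ell/m}$. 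With your version of the argument the phase is $e^{-2\pi i dka}\,e^{-2\pi i k\ell/m}$, and the residual factor $e^{2\pi i (1-d)ka}$ does not cancel for $d>1$ and generic real $a$, so the $k$-sum does not reduce to a discrete delta and the computation does not close. Once the first argument is corrected, the surviving terms $\ell = mn$, $0\le n\le d-1$, reassemble into $\frac{1}{m}\hT_d(f)(a,c)$ exactly as you anticipate, which is the paper's proof of (1).

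For part (2), your main route (set $d=1$, then verify $\hT_m\circ\hS_m = \frac{1}{m}I$ by the analogous direct computation, using $a$-periodicity with period $1$ and orthogonality) is again the paper's argument and is fine. Be wary of the parenthetical alternative: a one-sided inverse of a bounded operator does not in general force two-sidedness (the unilateral shift is the standard counterexample), and you would need to say precisely how the semigroup relation $\hT_m\hT_n = \hT_{mn}$ supplies the missing surjectivity; the direct computation is the safe path, and it is the one the paper takes.
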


\begin{proof}
 The { linear  operators $\hT_m$ and $\hS_m$} take $\sP^{\ast}(\RR \times \RR)$ into itself. We first let $\ell, m \ge 1$ be arbitrary. 
We compute
\begin{eqnarray*}
S_m( T_{\ell} (f))(a, c) & =& \frac{1}{m} \sum_{k=0}^{m-1} e^{2 \pi i ka} \hT_{\ell}(f)( ma, \frac{c+k}{m}) \\
&=& \frac{1}{m} \sum_{k=0}^{m-1} e^{2 \pi i ka}~ \frac{1}{\ell}  \sum_{j=0}^{\ell-1} f \Big(\frac{ma+j}{\ell}, \ell(\frac{c+k}{m})\Big). \\
\end{eqnarray*}

(1) Now suppose $\ell= dm$. Then 
$$
\hS_m( \hT_{dm} (f))(a, c)  = \frac{1}{dm^2}  \sum_{k=0}^{m-1} e^{2 \pi i k a} 
\Big( \sum_{j=0}^{dm -1} f \big(\frac{1}{d} a + \frac{j}{md}, d(c+k)\big) \Big).
$$
Now we apply twisted-periodicity to obtain
\begin{eqnarray*}
\hS_m( \hT_{dm}(f))(a, c) 
&=& \frac{1}{dm^2} \sum_{j=0}^{dm-1} \Big( \sum_{k=0}^{m-1} e^{2 \pi i ka } e^{-2 \pi i kd(\frac{1}{d} a + \frac{j}{md})} f(\frac{a}{d} + \frac{j}{md}, dc) \Big)\\
&=& \frac{1}{dm^2} \sum_{j=0}^{dm-1} f(\frac{a}{d} + \frac{j}{md}, dc)
\Big( \sum_{k=0}^{m-1} e^{-2\pi i \frac{jk}{m}} \Big)\\
&=& \frac{1}{dm} \sum_{n=0}^{d-1} f(\frac{a}{d} + \frac{n}{d}, dc) \\
&= &\frac{1}{m} \hT_d(f)(a, c).
\end{eqnarray*}
Here we used on the second line the orthogonality relation that $\sum_{k=0}^{m-1} e^{-2 \pi i \frac{jk}{m}}=0$ unless $m$ divides $j$,
and equals $m$ if $m$ divides $j$, and in the latter case we write $m=jn$ in the next sum.

(2) When $d=1$ we have 
$\hS_m(\hT_m(f))(a, c) = \frac{1}{m}\hT_1(f) (a, c).$ { On the other hand, 
\begin{eqnarray*}
\hT_m( \hS_{m}(f))(a, c) 
&=& \frac{1}{m} \sum_{k=0}^{m-1} \hS_m(f)(\frac{a+k}{m}, mc) \\
&=& \frac{1}{m} \sum_{k=0}^{m-1} \frac{1}{m} \sum_{j=0}^{m-1} e^{2\pi i j (\frac{a+k}{m})} f(a+k, \frac{mc+j}{m})\\
&=& \frac{1}{m^2} \sum_{j=0}^{m-1} e^{2 \pi ij(\frac{a}{m})}  f(a, c + \frac{j}{m}) \Big( \sum_{k=0}^{m-1} e^{2\pi i  (\frac{jk}{m})}\Big) \\
&=& \frac{1}{m} f(a, c) = \frac{1}{m} T_1(f)(a,c).
\end{eqnarray*}
Here we used the fact that $f$ is invariant under translation by integers in $a$-variable on the third line, and the same orthogonality relation as in (1) on the fourth line.} 
{ This establishes commutativity acting on equivalence classes of functions.}
Since $\hT_1(f)=f$ is the identity map, we 
have shown that $\hT_m$ is both a left and right inverse to $m \hS_m$, whence
  $\hT_m$ is invertible on $\sP^{\ast}(\RR \times \RR)$. 
We then have $\hS_m= \frac{1}{m}(\hT_m)^{-1}$
on this domain.
\end{proof}

%
%

\begin{lemma}\label{le47}
On the  extended twisted-periodic function vector space $\sP^{\ast}(\RR \times \RR)$ 
the operators
$\hS_m$ and $\hT_{\ell}$ commute for all $\ell \ge 1, m \ge 1.$ 
\end{lemma}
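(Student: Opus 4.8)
The plan is to prove the commutation relation $\hS_m \hT_\ell = \hT_\ell \hS_m$ by exploiting the factorization results already established in Lemma \ref{le46}, rather than by a direct (and messy) double-sum computation. The key observation is that Lemma \ref{le46} tells us two things: first, that $\hT_m$ is invertible on $\sP^{\ast}(\RR \times \RR)$ with $\hS_m = \frac{1}{m}(\hT_m)^{-1}$; and second, that the $\hT_m$ all commute among themselves and multiply as $\hT_m \hT_n = \hT_{mn}$ (Proposition \ref{Nle31}). Since $\hS_m$ is simply a scalar multiple of $(\hT_m)^{-1}$, it suffices to show that $(\hT_m)^{-1}$ commutes with $\hT_\ell$, and this is automatic from the commutativity of the $\hT$'s.

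First I would record the identity $\hS_m = \frac{1}{m}(\hT_m)^{-1}$ from Lemma \ref{le46}(2), which holds as an identity of operators acting on equivalence classes in $\sP^{\ast}(\RR \times \RR)$. Next I would invoke Proposition \ref{Nle31}, which gives $\hT_m \hT_\ell = \hT_\ell \hT_m = \hT_{m\ell}$ for all $m, \ell \ge 1$. From the commutation $\hT_m \hT_\ell = \hT_\ell \hT_m$, taking inverses of both sides (legitimate since $\hT_m$ is invertible and $\hT_\ell$ commutes with it) yields $(\hT_m)^{-1}\hT_\ell = \hT_\ell (\hT_m)^{-1}$. Concretely, from $\hT_\ell \hT_m = \hT_m \hT_\ell$ one left-multiplies and right-multiplies by $(\hT_m)^{-1}$ to get $(\hT_m)^{-1} \hT_\ell \hT_m (\hT_m)^{-1} = (\hT_m)^{-1}\hT_m \hT_\ell (\hT_m)^{-1}$, which simplifies to $(\hT_m)^{-1}\hT_\ell = \hT_\ell (\hT_m)^{-1}$. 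Multiplying through by the scalar $\frac{1}{m}$ then gives $\hS_m \hT_\ell = \hT_\ell \hS_m$, as desired.

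The main subtlety — and really the only point requiring care — is that all of these operator identities are being asserted on equivalence classes of functions in $\sP^{\ast}(\RR \times \RR)$, not on functions defined pointwise everywhere. Lemma \ref{le46} already established that the inverse relation $\hS_m = \frac{1}{m}(\hT_m)^{-1}$ holds in exactly this equivalence-class sense, so the algebraic manipulation above is valid on the same footing; one should simply note that the operators $\hT_\ell$, $\hS_m$, and $(\hT_m)^{-1}$ all descend to well-defined linear maps on the quotient space and that admissible denominators multiply appropriately under composition, so each intermediate product is well-defined. I do not expect any genuine obstacle here, since the hard analytic work (invertibility, the orthogonality-relation cancellation) was already carried out in the proof of Lemma \ref{le46}; the present statement is essentially a formal consequence of having both commutativity of the $\hT$-family and the identification of $\hS_m$ as an inverse. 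An entirely equivalent alternative, should one prefer an explicit route, would be to apply Lemma \ref{le46}(1) with the roles of the indices arranged to compute both $\hS_m \hT_\ell$ and $\hT_\ell \hS_m$ directly and observe they agree, but the inverse-based argument is cleaner and avoids reopening the double-sum calculation.
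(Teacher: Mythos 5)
Your proposal is correct and follows essentially the same route as the paper: the paper's proof also invokes Lemma \ref{le46} to write $\hS_m = \frac{1}{m}(\hT_m)^{-1}$ and then uses the mutual commutativity of the $\hT_\ell$ (Proposition \ref{Nle31}) to conclude that $(\hT_m)^{-1}$, hence $\hS_m$, commutes with $\hT_\ell$. Your extra remarks about the equivalence-class setting and admissible denominators are a sensible elaboration of what the paper leaves implicit.
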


\begin{proof}
We know that the operators  $\hT_{\ell}$ mutually commute and that all the $\hS_{m}$ mutually commute.
By Lemma \ref{le46} we know that all $\hT_{\ell}$ are invertible operators on $\sP(\RR \times \RR)$,
and that $\hS_{m}= \frac{1}{m}(\hT_{m})^{-1}$  are also invertible operators.
Thus
$$
\hS_{m} \circ \hT_{\ell} = \frac{1}{m} (\hT_{m})^{-1}\circ \hT_{\ell} = \frac{1}{m} \hT_{\ell} \circ  (\hT_m)^{-1}=\hT_{\ell} \circ \hS_{m},
$$
as required. 
\end{proof}

Next recall that
\begin{equation}\label{T-check}
\hT_m^{\vee}(f)(a, c) =  \frac{1}{m}\Big( \sum_{k=0}^{m-1} e^{2 \pi i ( \frac{(1-m)a+k}{m})} f(\frac{a+k}{m}, 1+ m(c-1)) \Big)
\end{equation}
and note that \eqref{hS-vee} can be rewritten
$$
{\hS}_m^{\vee} f (a,c) = \frac{1}{m} \sum_{k=0}^{m-1} 
e^{-2 \pi i (k+1) a}f \left( 1 + m(a - 1), \frac{c+k}{m}  \right) ~.
$$
\begin{lemma}\label{le48}
On the extended twisted-periodic function space $\sP^{\ast}( \RR\times \RR)$,
we have, for each $m \ge 1$, that 
$$
\hT_m^{\vee} = \hT_m.
$$
In addition, 
$$
\hS_m^{\vee} = \hS_m.
$$
\end{lemma}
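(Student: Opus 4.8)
The plan is to prove the two identities in sequence: first establish $\hT_m^{\vee} = \hT_m$ by a direct computation that exploits twisted-periodicity in the $c$-variable, and then deduce $\hS_m^{\vee} = \hS_m$ formally from the first identity together with $\hR^4 = \hI$, rather than by a second direct sum manipulation.

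First I would compute $\hT_m^{\vee}$ from its explicit form \eqref{T-check}. The $c$-argument there is $1 + m(c-1) = mc - (m-1)$, so twisted-periodicity \eqref{N311b} in the $c$-variable, applied in the negative direction $m-1$ times, gives
\[
f\Big(\tfrac{a+k}{m},\, mc - (m-1)\Big) = e^{2\pi i (m-1)\frac{a+k}{m}}\, f\Big(\tfrac{a+k}{m},\, mc\Big)
\]
for each $k$. Substituting this into \eqref{T-check}, the accumulated phase in the $k$-th term is $e^{2\pi i (\frac{(1-m)a+k}{m})}\cdot e^{2\pi i (m-1)\frac{a+k}{m}}$, whose exponent I expect to collapse: the $a$-dependence cancels since $(1-m)+(m-1)=0$, and the remaining contribution is $\frac{k+(m-1)k}{m}=k$, an integer, so $e^{2\pi i k}=1$. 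Hence every phase becomes $1$ and the sum reduces to $\frac{1}{m}\sum_{k=0}^{m-1} f(\frac{a+k}{m}, mc) = \hT_m(f)(a,c)$, which is the first claim. This telescoping — the fact that the pre-factor $e^{2\pi i\frac{(1-m)a+k}{m}}$ built into $\hT_m^{\vee}$ is exactly the inverse of the twisted-periodicity phase produced by returning $c$ from $mc-(m-1)$ to $mc$ — is the one genuinely computational point, and the step I would verify most carefully.

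With $\hT_m^{\vee}=\hT_m$ in hand, I read it as the operator identity $\hR^2\hT_m\hR^{-2}=\hT_m$ on $\sP^{\ast}(\RR\times\RR)$, where $\hR$ acts by Lemma \ref{le45} and $\hR^{-1}=\hR^3$ since $\hR^4=\hI$; that is, $\hR^2$ commutes with $\hT_m$. Then, using the definition $\hS_m^{\vee} := \hR^3\hT_m\hR^{-3}$ from \eqref{N331},
\[
\hS_m^{\vee} = \hR^3\hT_m\hR^{-3} = \hR\,(\hR^2\hT_m\hR^{-2})\,\hR^{-1} = \hR\,\hT_m\,\hR^{-1} = \hS_m,
\]
the last equality being the defining relation $\hS_m := \hR\hT_m\hR^{-1}$. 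This route bypasses a direct manipulation of \eqref{hS-vee}, which is less clean because its phase factors $e^{-2\pi i(k+1)a}$ and those of $\hS_m$ in \eqref{hS} do not match term-by-term after reindexing (a shift realigning the phases fails to realign the $c$-arguments $\frac{c+k}{m}$ simultaneously).

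I expect the main obstacle to be purely bookkeeping in the first part: keeping the sign and direction of the iterated $c$-shift consistent (the shift is by $-(m-1)$, producing a \emph{positive} phase $e^{2\pi i(m-1)\frac{a+k}{m}}$), and confirming the total exponent is an integer for every $k$ in the range $0\le k<m$. Once that identity is secured, the passage to $\hS_m^{\vee}=\hS_m$ is essentially formal, relying only on $\hR^4=\hI$ and the commuting of $\hR^2$ with $\hT_m$.
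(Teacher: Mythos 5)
Your proposal is correct, and both halves take a genuinely different route from the paper. For the first identity, the paper does not collapse $\hT_m^{\vee}$ directly: it first rewrites \eqref{T-check} (one application of twisted-periodicity) as $\hT_m^{\vee}(f)(a,c) = e^{-2\pi i a}\frac{1}{m}\sum_{k=0}^{m-1} f(\frac{a+k}{m}, m(c-1))$, then computes the composition $\hT_m^{\vee}\circ\hT_m = \hT_{m^2}$, and finally cancels against $\hT_m\circ\hT_m=\hT_{m^2}$ using the invertibility of $\hT_m$ from Lemma \ref{le46}. Your direct computation — shifting the $c$-argument from $1+m(c-1)=mc-(m-1)$ up to $mc$ by iterating twisted-periodicity $(m-1)$ times and verifying that the total exponent $\frac{(1-m)a+k}{m}+\frac{(m-1)(a+k)}{m}=k$ is an integer — is shorter, avoids any appeal to invertibility, and is correct as stated; the iterated use of the relation $f(a,c+1)=e^{-2\pi i a}f(a,c)$ is legitimate since it is built into the definition of $\sP^{\ast}(\RR\times\RR)$. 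For the second identity the paper merely says the calculation is "analogous" and omits it, whereas you actually supply a complete argument: reading $\hT_m^{\vee}=\hT_m$ as $\hR^2\hT_m\hR^{-2}=\hT_m$ and conjugating once more by $\hR$ gives $\hS_m^{\vee}=\hR^3\hT_m\hR^{-3}=\hR(\hR^2\hT_m\hR^{-2})\hR^{-1}=\hR\hT_m\hR^{-1}=\hS_m$, using only \eqref{N331}, $\hR^4=\hI$, and the invariance of $\sP^{\ast}(\RR\times\RR)$ under $\hR$ (Lemma \ref{le45}). The one thing your formal step quietly relies on is the identification of the explicit formula \eqref{T-check} with the conjugate $\hR^2\hT_m\hR^{-2}$ (since your part one is proved from the explicit formula while part two uses the conjugation definition); the paper asserts this identification when it introduces the four families, so this is consistent with its conventions, but it is worth noting that this equality of descriptions is being used.
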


\begin{proof}
(1) Under the assumption that $f$ is a twisted-periodic function, we may rewrite \eqref{T-check} as the identity
\begin{equation} \label{T-check-2}
\hT_m^{\vee}(f)(a, c) = e^{- 2\pi i  a} \frac{1}{m} \sum_{k=0}^{m-1} f(\frac{a+k}{m}, m(c-1)).
\end{equation}
We now  show that
$$
\hT_m^{\vee} \circ \hT_m(f) (a, c) = \hT_{m^2}(f)(a, c)
$$
by calculating
\begin{eqnarray*}
\hT_m^{\vee} ( T_m(f))(a, c) &=& e^{-2 \pi i a} \frac{1}{m} \sum_{k=0}^{m-1} \hT_m(f) (\frac{a+k}{m}, m(c-1))\\
&=& e^{- 2 \pi i a} \frac{1}{m} \sum_{k=0}^{m-1} \frac{1}{m}\Big(\sum_{j=0}^{m-1} f( \frac{\frac{a+k}{m} + j}{m}, m(m(c-1)))\Big) \\
&=& e^{-2 \pi i a}\frac{1}{m^2} \sum_{\ell=0}^{m^2-1} f( \frac{a+ \ell}{m^2}, m^2 c -m^2)).\\
\end{eqnarray*}
We now apply the twisted-periodicity relations to obtain
\begin{eqnarray*}
\hT_m^{\vee} ( \hT_m(f))(a, c) &=& e^{-2 \pi i a} \frac{1}{m^2} \sum_{\ell=0}^{m^2 -1} e^{2 \pi i m^2( \frac{a+ \ell}{m^2})} f(\frac{a+ \ell}{m^2}, m^2 c)\\
&=& e^{- 2 \pi i a} \frac{1}{m^2} \sum_{\ell=0}^{m^2 -1} e^{2 \pi i a} f( \frac{a+ \ell}{m^2}, m^2 c)\\
&=& \hT_{m^2}(f)(a, c),
\end{eqnarray*}
as asserted.
Now we already know that $\hT_m \circ \hT_{m} = \hT_{m^2}$. 
Since the operator $\hT_m$ is invertible on $\sP^{\ast}(\RR \times \RR)$, we conclude that
$\hT_m^{\vee} = \hT_m$ on this domain.

We obtain $\hS_m^{\vee}= \hS_m$ by an analogous calculation; details are omitted.  
\end{proof}

%
%
%

\subsection{$L^p$-spaces: Proof of Theorem \ref{th2001}} \label{sec44}

Analogous results on the two-variable Hecke operators $\hT_m$ apply in the
Banach spaces $L^p (\Box, da\,  dc)$ for $p \ge 1$. To study them
we note that functions on the extended function space $\sP^{\ast}( \RR\times \RR)$ are
determined by their values in the open unit square $\Box^{\circ}$. 
We let 
$$
\mathcal{D}_p :=\sP^{\ast}( \RR\times \RR)|_{\Box^{\circ}}\bigcap L^p( \Box, da \,dc).
$$
This domain is dense in $L^p ( \Box, da dc)$ because it contains $C^{0}(\Box)$,
viewed as those continuous functions on $\Box^{\circ}$ that continuously extend to $\Box$,
which is known to be dense (\cite[Theorem 3.14]{Rud74}).

\begin{lemma}\label{le49} 
For all functions $F(a, c)$ in  the domain  $\sD_p \subset L^p(\Box, da \,dc)$, the $L^p$-norms of  
  $\hT_m(F)$ and $\hS_m(F)$ restricted to $\Box^{\circ}$, for  each $m \ge 1$, satisfy 
$$ || \hT_m(F) ||_p \le m ||F||_{p} \qquad and \qquad 
|| \hS_m(F)||_p \le m ||F||_{p}  
$$
for all $p \ge 1$. 
\end{lemma}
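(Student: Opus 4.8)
The plan is to bound the $L^p$-norm of each Hecke operator by directly estimating the sum defining it, using the twisted-periodicity relations to convert the dilated-and-contracted arguments back into integrals over a single unit square. The key observation is that the $L^p$-norm of a twisted-periodic function is invariant under measurement over any unit square with integer-lattice corners, as noted in Remark \ref{rem43}; this lets me freely translate domains of integration. For $\hT_m$, I would start from
$$
\hT_m(F)(a,c) = \frac{1}{m}\sum_{k=0}^{m-1} F\Big(\frac{a+k}{m}, mc\Big),
$$
apply the triangle inequality in $L^p$ to the sum of $m$ terms, and then analyze each term separately.

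First I would estimate a single term $G_k(a,c) := \frac{1}{m} F\big(\frac{a+k}{m}, mc\big)$. The plan is to compute $\|G_k\|_{L^p(\Box)}^p = \frac{1}{m^p}\int_0^1\int_0^1 \big|F\big(\tfrac{a+k}{m}, mc\big)\big|^p\, da\, dc$ and perform the change of variables $u = \frac{a+k}{m}$, $v = mc$. As $a$ ranges over $[0,1]$, the variable $u$ ranges over an interval of length $\frac{1}{m}$, contributing a Jacobian factor $m$; as $c$ ranges over $[0,1]$, the variable $v$ ranges over $[0,m]$, contributing a factor $\frac{1}{m}$. The net Jacobian is $1$, so $\|G_k\|_p^p = \frac{1}{m^p}\int\int_{R_k} |F(u,v)|^p\, du\, dv$, where $R_k$ is a rectangle of width $\frac{1}{m}$ in $u$ and height $m$ in $v$. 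The crucial step is now to invoke twisted-periodicity: since $|F(u,v+1)| = |F(u,v)|$, the integral of $|F|^p$ over the tall rectangle $R_k$ (height $m$) equals the integral over a $\frac{1}{m}\times 1$ rectangle, which is bounded by $\|F\|_{L^p(\Box)}^p$. Hence each $\|G_k\|_p \le \frac{1}{m}\|F\|_p$, and summing the $m$ terms gives $\|\hT_m(F)\|_p \le m\cdot\frac{1}{m}\|F\|_p = \|F\|_p \le m\|F\|_p$. (In fact this yields the sharper bound $\|F\|_p$, but the stated bound suffices.)

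For $\hS_m$ I would run the parallel computation on
$$
\hS_m(F)(a,c) = \frac{1}{m}\sum_{k=0}^{m-1} e^{2\pi i ka}\, F\Big(ma, \frac{c+k}{m}\Big),
$$
noting that the modulus of the exponential prefactor is $1$ so it drops out of all $L^p$-estimates. The change of variables is now $u = ma$, $v = \frac{c+k}{m}$, which again has net Jacobian $1$ but produces a rectangle of width $m$ in $u$ and height $\frac{1}{m}$ in $v$; here I would use the $a$-periodicity $|F(u+1,v)| = |F(u,v)|$ to fold the wide rectangle down to width $1$, bounding each term's norm by $\frac{1}{m}\|F\|_p$ and summing to get $\|\hS_m(F)\|_p \le \|F\|_p \le m\|F\|_p$.

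The main obstacle is purely bookkeeping: ensuring the change-of-variables is carried out correctly and that the periodic folding genuinely recovers $\|F\|_{L^p(\Box)}^p$ rather than overcounting. I expect the only subtlety to be confirming that the rectangles $R_k$ tile (after folding) into a region on which the periodicized $|F|^p$ integrates to exactly $\|F\|_p^p$, which follows cleanly from the integer-lattice invariance of the norm in Remark \ref{rem43}. Since we only need an inequality with constant $m$, I do not even need the folding to be exact — it suffices that each of the $m$ terms contributes at most $\|F\|_p$, giving the weaker bound $m\|F\|_p$ immediately from the triangle inequality and the Jacobian-$1$ change of variables applied over the full tall/wide rectangle together with periodicity.
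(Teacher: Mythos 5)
Your strategy is essentially the paper's: split $\hT_m(F)$ into its $m$ affine terms, change variables with Jacobian $1$, use twisted-periodicity to fold back into the unit square, and finish with Minkowski. (The paper organizes the bookkeeping slightly differently, first slicing the square into the $m$ horizontal strips $\frac{j}{m}\le c<\frac{j+1}{m}$, rewriting $F(\cdot,mc)$ as $e^{-2\pi i j(\cdot)}F(\cdot,mc-j)$ on each strip, and bounding each of the resulting $m^2$ pieces by $\|F\|_p$; you instead keep each of the $m$ terms whole over the tall rectangle. Both are equally crude and both land on $m\|F\|_p$.)

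There is, however, a factor-of-$m$ slip in your folding step. If $R_k$ is the $\frac{1}{m}\times m$ rectangle, then periodicity of $|F|$ in $v$ gives
\begin{equation*}
\iint_{R_k}|F(u,v)|^p\,du\,dv \;=\; m\int_{k/m}^{(k+1)/m}\!\!\int_0^1|F(u,v)|^p\,dv\,du \;\le\; m\,\|F\|_p^p,
\end{equation*}
i.e.\ $m$ times the integral over the $\frac{1}{m}\times 1$ rectangle, not equal to it. Hence the per-term bound is $\|G_k\|_p\le m^{1/p-1}\|F\|_p$, not $\frac{1}{m}\|F\|_p$, and Minkowski yields $\|\hT_m(F)\|_p\le m^{1/p}\|F\|_p$. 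Since $m^{1/p}\le m$ for $p\ge1$ (and since your fallback observation that each term contributes at most $\|F\|_p$ remains true), the stated lemma still follows from your argument; but your parenthetical claim that this route gives the sharper bound $\|F\|_p$ does not follow from Minkowski for $p>1$. (That sharper bound is in fact true, but you need the power-mean inequality $|\frac1m\sum_k x_k|^p\le\frac1m\sum_k|x_k|^p$ applied pointwise before integrating, so that the disjointness of the folded $u$-intervals can be exploited.) Finally, the lemma as used in the paper also covers $p=\infty$, which your integral change of variables does not address; that case needs the separate, trivial supremum estimate the paper supplies.
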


\begin{proof}
The  $L^p$-norm  for $p < \infty$ is 
$$
||F||_p:= \Big( \int_{0}^1\int_{0}^1 |F(a, c)|^p da \,dc\Big)^{\frac{1}{p}},
$$
and the norm $||F||_\infty$ for $p=\infty$ {is the largest value among the suprema of $|F(a, c)|$ 
on the regions in $\Box^{\circ}$ where it is continuous.}
For $p \ge 1$ this norm $|| \cdot ||_p$ satisfies the triangle inequality
(by Minkowski's inequality).

{ Write}
\begin{equation}\label{eq491}
(\hT_m F)(a, c)= \sum_{j=0}^{m-1} (\hT_mF)_j(a, c),
\end{equation}
with 
$$
(\hT_mF)_j(a, c) : = 
\left\{
\begin{array}{cl}
\hT_m(F) (a, c)  & \mbox{if}~ \frac{j}{m} \le c< \frac{j+1}{m}~, \\
~~~ \\
0 & \mbox{otherwise}.
\end{array}
\right.
$$
Using twisted-periodicity we find for $\frac{j}{m} \le c< \frac{j+1}{m}$
that
$$
(\hT_m F)_j(a, c) = \frac{1}{m} \sum_{k=0}^{m-1} F(\frac{a+k}{m}, mc -j) e^{- 2 \pi i j (\frac{a+k}{m})}.
$$
{ Now, for $1 \le p < \infty$, letting
$$
\tilde{F}(\frac{a+k}{m}, mc -j) : = 
\left\{
\begin{array}{cl}
F(\frac{a+k}{m}, mc -j)  & \mbox{if}~ \frac{j}{m} \le c< \frac{j+1}{m}~, \\
~~~ \\
0 & \mbox{otherwise}, 
\end{array}
\right.
$$
we have}
\begin{eqnarray*}
|| \tilde{F}( \frac{a+k}{m}, mc -j)e^{-2 \pi i j(\frac{a+k}{m})} ||_p^p &=& 
 \int_{\frac{j}{m}}^{\frac{j+1}{m}} \Big( \int_{0}^1| F(\frac{a+k}{m}, mc- j)|^p da \Big) \, dc \\
&=& \int_{0}^1 \int_{\frac{k}{m}}^{\frac{k+1}{m}} | F(\tilde{a}, \tilde{c})|^p d \tilde{a} \, d \tilde{c}\\
& \le & \int_{0}^1 \int_{0}^1 | F(\tilde{a}, \tilde{c})|^p d \tilde{a} \, d \tilde{c}= ||F||_p^p.
\end{eqnarray*}
For $p=\infty$, a similar argument shows
$$|| \tilde{F}( \frac{a+k}{m}, mc -j)e^{-2 \pi i j(\frac{a+k}{m})} ||_\infty \le ||F||_\infty.$$
The $L^p$-triangle inequality then gives  $||(\hT_m F)_j||_p \le \frac{1}{m}  ( m ||F||_p ) = ||F||_p.$
Now \eqref{eq491} yields 
$$
||\hT_m(F)||_p \le \sum_{j=0}^{m-1} ||(\hT_m F)_j||_p \le m ||F||_p 
$$
for all $p \ge 1$, as required.

Interchanging the roles of $a$ and $c$ gives the same norm bound for
$$\hS_m(F)(a,c) =\frac{1}{m} \sum_{k=0}^{m-1} 
e^{2 \pi i ka} F \left( ma, \frac{c+k}{m} \right).$$ 
\end{proof}

\begin{proof}[Proof of Theorem \ref{th2001}.]
(1) Earlier lemmas show the four families of  operators  $\hT_m,$  $\hS_m,$  $\hT_m^{\vee},$ $\hS_m^{\vee}$
preserve the extended twisted-periodic space $\sP^{\ast}( \RR\times \RR)$.
The norm bound in Lemma \ref{le49} 
implies that they  also preserve each domain $\sD_p$, for $1 \le p \le \infty$.
Lemmas \ref{le46},  \ref{le47} and \ref{le48} 
together show that on each $\sD_p$ there are really only two distinct families of operators,
which are $\hT_m$ and $\hS_m$, and that  $\hS_m= \frac{1}{m}(\hT_{m})^{-1}$ holds there.  
{ It follows that } the four families of operators $\hT_m, \hS_m, \hT_m^{\vee}, \hS_m^{\vee}$ induce well-defined 
bounded linear actions on $L^p(\Box, da \, dc)$ for $1 \le p \le \infty$. Next for $F, G \in \sD_p$ we have $\hS_m= \frac{1}{m} \hT_{m}^{-1}$ and Lemma \ref{le49} gives
$$
|| F - G||_p \le m ||\hS_m( \hT_m(F)) - \hS_m(\hT_m(G))||_p \le m^2 ||\hT_m(F) - \hT_m(G)||_p.
$$
Since $\sD_p$ is dense, taking limits gives the same norm bounds for $F, G \in L^p(\Box, da\, dc)$. This
lower bound  inequality implies
that $\hT_m$ is invertible, and that 
 the relation  $\hS_m= \frac{1}{m} \hT_{m}^{-1}$ continues to hold on the extended operators on $L^p(\Box, da \, dc).$

The argument above also establishes the following upper and lower bounds for the norm of $\hT_m(F)$:
\begin{equation}
 \frac{1}{m^2}||F||_p \le   || \hT_m(F) ||_p \le m ||F||_{p}.
 \end{equation}
{  We improve this bound for the case $p=2$ below.}

(2) The pairwise commutativity of members of all  four sets of these operators on $L^p(\Box, da \, dc)$ extends by continuity
from their pairwise commutativity on the domain $\sD_p$, which they preserve by (1), { which itself follows from
the commutativity result established  in
 Lemma \ref{le47}.}
 
 (3) On $L^2( \Box, da \, dc)$ it  suffices to show 
 that the adjoint Hecke operator $(\hT_m)^{\ast} = \hS_m$, since applying $\ast$ then gives
 the other relation, {whence $\sA_0^2$ is a $\star$-algebra.}
 We need only check it holds on the dense subspace $\sD_2$ of $L^2( \Box, da \, dc)$.
 We have
 \begin{eqnarray*}
 \langle \hT_m(f)(a, c) , g(a, c) \rangle &=&
  \frac{1}{m} \sum_{k=0}^{m-1} \int_{0}^1 \int_{0}^1 f(\frac{a+k}{m}, mc) \overline{ g(a, c) } da \, dc\\
  &=& \frac{1}{m} \sum_{j=0}^{m-1} \sum_{k=0}^{m-1} \int_{\frac{j}{m}}^{ \frac{j+1}{m}}  \int_{0}^1
  f(\frac{a+k}{m}, mc- j) e^{- 2 \pi i j (\frac{a+k}{m})} \overline{ g(a, c) } da \, dc.
  \end{eqnarray*}
  Now we make the variable change $\tilde{c} = mc -j$ with $0 \le \tilde{c} < 1$ and
  $\tilde{a} = \frac{a+k}{m}$ with $ \frac{k}{m} \le \tilde{a} < \frac{k+1}{m}$. Since
  $a = m \tilde{a} -k$ and $da \, dc = d \tilde{a} d \tilde{c}$, we  obtain
  \begin{equation*}
 \langle \hT_m(f)(a, c) , g(a, c) \rangle = \frac{1}{m} \sum_{j=0}^{m-1} \sum_{k=0}^{m-1} \int_{0}^1
 \Big( \int_{\frac{k}{m}}^{\frac{k+1}{m}} f( \tilde{a} , \tilde{c}) e^{- 2 \pi i j \tilde{a}} 
 \overline{g  (m \tilde{a} - k, \frac{\tilde{c} + j}{m})} d \tilde{a} \Big) d \tilde{c}.
\end{equation*}
On the other hand
  \begin{eqnarray*}
 \langle f(a, c) , \hS_m(g)(a, c) \rangle &=& 
 \frac{1}{m} \sum_{j=0}^{m-1} \int_{0}^1 \int_{0}^1 f(\tilde{a}, \tilde{c})\,  \overline{ e^{2 \pi i j \tilde{a}} g(m\tilde{a},\frac{\tilde{c}+j}{m}) } \, d\tilde{a} \, d\tilde{c}\\
 &=& \frac{1}{m} \sum_{j=0}^{m-1} \sum_{k=0}^{m-1}  \int_{0}^1 \int_{\frac{k}{m}}^{\frac{k+1}{m}} 
 f(\tilde{a}, \tilde{c}) e^{- 2 \pi i j \tilde{a}} \overline{g( m \tilde{a}-k, \frac{\tilde{c} + j}{m})} d \tilde{a}  d\tilde{c}.
 \end{eqnarray*}
 Term-by-term comparison of the formulas shows $(\hT_m)^{\ast} = \hS_m$.
 
 { Now that we know the adjoint $(\hT_m)^{\ast} = \hS_m$,  we have
\begin{eqnarray} \label{unitary}
 || \hT_m(F)||_2^2  &= & \langle \hT_m(F), \hT_m(F)\rangle_2 = \langle (\hT_m)^{*} \circ \hT_m(F),F \rangle_2 \nonumber \\
 & = &   \langle \hS_m \circ \hT_m(F),F \rangle_2 =\langle \frac{1}{m} \hT_m^{-1} \circ \hT_m(F), F \rangle_2 \nonumber \\
 &= & \frac{1}{m} ||F||_2^2.
\end{eqnarray}
This yields the norm identity 
$|| \hT_m(F)||_2 = \frac{1}{\sqrt m} ||F||_2$ valid for all functions $F$. It follows that  $\sqrt{m} \hT_m$ is
a Hilbert space isometry, and since it is surjective it is a unitary operator. 
In similar fashion $\sqrt{m} \hS_m= \frac{1}{\sqrt{m}} \hT_m^{-1}$ is 
a surjective isometry, hence a unitary operator.}
\end{proof}

%
%

\section{Lerch Eigenspaces $\sE_s$ and their Properties}\label{sec5}
\setcounter{equation}{0}

We construct a two-dimensional space $\sE_s$ of simultaneous
eigenfunctions for all the two-variable Hecke operators, built out
of the Lerch zeta function. We make use of the functional equations
for the Lerch zeta function.

%
%
%

\subsection{Lerch zeta function: functional equations}\label{sec51}

The functional equations proved in part I \cite[Theorem 2.1]{LL1}
take a particularly simple form when expressed in terms of the 
 $\hR$-operator. Recall that for $0< a, c <1$ these functional equations involve the two
 functions
 $$
 L^{+}(s, a, c) = \sum_{n = -\infty}^{\infty} e^{2 \pi i n a} |n+c|^{-s}
 $$
 and
 $$
 L^{-}(s, a, c) = \sum_{n=-\infty}^{\infty} (\sgn (n+c)) e^{2 \pi i n a}|n+c|^{-s}.
 $$
 Now let $\Gamma_{\RR}^{+}(s) = \pi^{-\frac{s}{2}} \Gamma(\frac{s}{2})$ and 
 ${\Gamma_{\RR}^{-}(s)} = \pi^{\frac{s+1}{2}}\Gamma(\frac{s+1}{2}) =\Gamma_{\RR}^{+}(s+1)$.
 The functional equations are
 $$
 \Gamma_{\RR}^{+}(s)L^{+}(s, a, c) = e^{-2 \pi i a c}\Gamma_{\RR}^{+}(1-s) L^{+}(1-s, 1-c, a)
 $$
 and
 $$
 \Gamma_{\RR}^{-}(s)L^{-}(s, a, c) = i \, e^{-2 \pi i a c}\Gamma_{\RR}^{-}(1-s) L^{-}(1-s, 1-c, a).
 $$

%

\begin{defi}~\label{de 21} 
{\em 
The   {\em Tate  gamma functions}  $\ggG^{\pm}(s)$ 
(also known as {\em Gelfand-Graev gamma functions}, cf.  \cite[Chap. 2, Sect. 2.5]{GGP69}) are given by
\begin{eqnarray}
\ggG^{+}(s)  & := & \frac{ \Gamma_{\RR}^{+}(s)}{\Gamma_{\RR}^{+}(1-s)} = \frac{ \pi^{-\frac{s}{2}} \Gamma( \frac{s}{2})}
{ \pi^{-\frac{1-s}{2}} \Gamma(\frac{1-s}{2})} ,  \label{N223a} \\
\ggG^{-}(s)  & := & \frac{\Gamma_{\RR}^{-}(s)}{\Gamma_{\RR}^{-}(1-s)}= \frac{ \pi^{-\frac{s+1}{2}} \Gamma( \frac{s+1}{2})}
{ \pi^{-\frac{2-s}{2}} \Gamma(\frac{2-s}{2})}. \label{N223b} 
\end{eqnarray}
}
\end{defi}

These functions  satisfy the identities 
\beql{N224}
\ggG^{\pm}(s) \ggG^{\pm}(1-s) = 1 ~~~\mbox{for}~~ s \in \CC.
\eeq
The functions $\ggG^{\pm}(1-s)= \ggG^{\pm}(s)^{-1}$  have a ``scattering matrix''
 interpretation (see  Burnol \cite[Sec. 5]{Bur03}), in which \eqn{N224} expresses 
 unitarity of the ``scattering matrix''.
The functions  $\ggG^{\pm}(s)$ have simple zeros at certain positive integers and simple poles at
certain negative integers, whose location depends  on the sign $\pm$. \\

Recall also  that part I (\cite[Theorem 2.2]{LL1}) 
defined the {\em extended Lerch functions} $L_{*}^{\pm}(s, a, c)$ on the domain 
$(a, c) \in \RR \times \RR$;  these satisfy the twisted-periodicity conditions
\begin{eqnarray}
L_{*}^{\pm}(s, a+1, c)  &= &L_{\ast}^{\pm} (s, a, c), \label{N226a} \\
L_{*}^{\pm}(s, a, c+1) &=  &e^{  -2\pi i a} L_{\ast}^{\pm} (s, a, c). \label{N226b}
\end{eqnarray} 
These functions are entire functions of $s$ except for $L^{+}(s, a, c)$ when
$a$ or $c$ is an integer, which is meromorphic with its only singularities being
simple poles at $s=0$ or $s=1$, or both.

The $\hR$-operator acts on the Lerch functions as
\beql{N225}
\hR(L^{\pm})(s, a,c) = e^{-2 \pi i  a c} L^{\pm}(s, 1-c, a).
\eeq
The functional equations for the Lerch zeta function given in part I have the following
formulation  in terms of the  $\hR$-operator. \medskip

%
\begin{prop}~\label{pr21}
{\rm (Lerch Functional Equations)}
The functions  $L^{\pm}(s, a,c)$ satisfy for $0 < a, c <1$ the functional equations
\beql{N227}
\Gamma_{\RR}^{\pm}(s)L^{\pm}(s, a, c) = w_{\pm} \Gamma_{\RR}^{\pm} (1-s) e^{-2 \pi i a c} L^{\pm}(1-s, 1-c, a),
\eeq
in which $w_{+}=1, w_{-}=i$. 
Equivalently,
\beql{N228}
L^{\pm}(s, a, c) = w_{\pm} \gamma^{\pm}( 1-s) \hR(L^{\pm})(1-s, a, c).
\eeq
The same functional equations hold for the extended Lerch
functions $L_{\ast}^{\pm}(s,a,c)$ valid for 
 $(a, c) \in \RR \times \RR$.
\end{prop}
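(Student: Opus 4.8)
The plan is to recognize that this proposition is essentially a reformulation rather than a fresh analytic result. The two displayed functional equations \eqref{N227} are nothing more than the functional equations recorded at the start of \S\ref{sec51} (taken from \cite[Theorem 2.1]{LL1}), rewritten with the uniform notation $w_{+}=1$, $w_{-}=i$ so that the $L^{+}$ and $L^{-}$ cases read identically. Thus the first assertion requires no argument beyond matching the constants $1$ and $i$ appearing in the two Part~I functional equations against $w_{\pm}$.

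For the equivalent form \eqref{N228}, I would divide \eqref{N227} through by $\Gamma_{\RR}^{\pm}(s)$, which is legitimate since $\Gamma_{\RR}^{\pm}(s)$ is nonvanishing, to obtain
$$
L^{\pm}(s, a, c) = w_{\pm}\,\frac{\Gamma_{\RR}^{\pm}(1-s)}{\Gamma_{\RR}^{\pm}(s)}\, e^{-2\pi i a c}\, L^{\pm}(1-s, 1-c, a).
$$
Then I would read off the ratio $\Gamma_{\RR}^{\pm}(1-s)/\Gamma_{\RR}^{\pm}(s) = \gamma^{\pm}(1-s)$ directly from the definition \eqref{N223a}--\eqref{N223b} of the Tate gamma functions, and recognize from the action formula \eqref{N225} that $e^{-2\pi i a c}\, L^{\pm}(1-s, 1-c, a) = \hR(L^{\pm})(1-s, a, c)$. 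Substituting both identifications yields \eqref{N228} exactly; this step is pure substitution.

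The only part needing a genuine argument is the passage from $0<a,c<1$ to all $(a,c)\in\RR\times\RR$ for the extended functions $L_{\ast}^{\pm}$. Here the strategy is to observe that both sides of \eqref{N228}, read as functions of $(a,c)$ with $s$ held fixed, are twisted-periodic: the left side by \eqref{N226a}--\eqref{N226b}, and the right side because $\hR$ carries twisted-periodic functions to twisted-periodic functions (Lemma \ref{lem32}), so $\hR(L_{\ast}^{\pm})(1-s,\cdot,\cdot)$ is again twisted-periodic with the same multipliers. Since the two sides agree on the open unit square $\Box^{\circ}$ (where $L_{\ast}^{\pm}=L^{\pm}$ and the functional equation is already established) and both obey the same twisted-periodicity relations, they must coincide at every $(a,c)$ off the integer grid. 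I expect this last step to be the main---and indeed essentially the only---obstacle, and its crux is checking that the multipliers produced by applying $\hR$ match those of $L_{\ast}^{\pm}$; this matching is precisely the content of Lemma \ref{lem32}, so the obstacle is already cleared by that lemma.
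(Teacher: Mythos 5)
Your proposal is correct in substance and matches the paper for the first two assertions: the paper's own proof of \eqref{N227} is likewise a citation of Part~I \cite[Theorem 2.1]{LL1} with the constants repackaged as $w_{\pm}$, and \eqref{N228} is obtained exactly as you say, by dividing by the nonvanishing $\Gamma_{\RR}^{\pm}(s)$ and reading off $\gamma^{\pm}(1-s)$ and the $\hR$-action. Where you diverge is the last assertion. The paper does not argue the extension at all: it simply cites Part~I \cite[Theorem 2.2]{LL1}, which establishes the functional equation for $L_{\ast}^{\pm}$ directly on all of $\RR\times\RR$, explicitly \emph{including integer values of $a$ and $c$}. You instead give a genuine argument --- both sides of \eqref{N228} are twisted-periodic (the right side via Lemma \ref{lem32}), they agree on $\Box^{\circ}$, and twisted-periodicity propagates the identity. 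That argument is sound and has the virtue of being self-contained within this paper, but it only reaches $(\RR\smallsetminus\ZZ)\times(\RR\smallsetminus\ZZ)$: points on the integer grid are not obtained from $\Box^{\circ}$ by the relations $F(a+1,c)=F(a,c)$, $F(a,c+1)=e^{-2\pi ia}F(a,c)$, so the grid-line cases (where $L_{\ast}^{+}$ can acquire poles at $s=0,1$ and where the four-term sums degenerate) still require the boundary analysis of Part~I Theorem 2.2. If you state the conclusion only off the integer grid your proof is complete; to match the proposition's claim of validity on all of $\RR\times\RR$ you must either supplement your periodicity argument with a limiting/boundary analysis or fall back on the Part~I citation as the paper does.
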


\begin{proof}
 For $0 < a, c < 1$ this result is shown  in part I \cite[Theorem 2.1]{LL1},
 {  where we have restated that result using the $\hR$-operator.}
   The extension to the boundary cases then uses the definition of $\zeta_{\ast}(s, a,c)$ given
in part I. It  applies on the domain $(a, c) \in \RR \times \RR$, given in \cite[Theorem 2.2]{LL1},
including integer values of $a$ and $c$.
\end{proof}

%
%
%

\subsection{ Lerch eigenspaces $\sE_s$}\label{sec52}

We  will view the complex variable $s \in \CC$ as fixed, and introduce the abbreviated notations
\begin{eqnarray} 
L_s^{\pm}(a, c) &  :=  & L^{\pm}( s, a, c) \label{200b} \\
R_s^{\pm}(a, c) & :=    & e^{-2 \pi i a c} L^{\pm}(1-s, 1-c, a). \label{200c}
\end{eqnarray}
The definition of the $\hR$-operator now gives
\beql{200g}
R_s^{\pm}(a,c)=  \hR(L_{1-s}^{\pm})(a,c).
\eeq
The   functional equations \eqn{N228}  given in Proposition \ref{pr21} 
above can be rewritten
\beql{N401}
L_s^{\pm}( a, c) = w_{\pm} \ggG^{\pm}( 1-s) R_s^{\pm}(a, c). 
\eeq
With this convention,  the $s$-variable in $R_{s}^{\pm}(a, c)$ corresponds to $1-s$ in
the associated Lerch function in \eqn{200g}, and this leads to 
$L_s^{\pm}(a, c)$ and $R_s^{\pm}(a,c)$ being linearly related in \eqn{N401}. \smallskip

\begin{defi}~\label{de31}
{\em
For fixed $s \in \CC$, the  {\em Lerch eigenspace} $\sE_s$ is 
the vector space over $\CC$ spanned by the four functions 
\beql{201aa}
\sE_s := <L_s^{+}(a, c), L_s^{-}(a, c), R_s^{+}(a, c), R_s^{-}(a, c)>
\eeq
for $(a, c) \in \Box^{\circ}= \{ (a, c): 0 < a, c < 1\}$.
}
\end{defi}

The vector space   $\sE_s$ { is} at most two-dimensional, since, as noted above, 
$L_s^{+}(a,c)$ and $R_s^{+}(a,c)$ (resp. $L_s^{-}(a,c)$ and $R_s^{-}(a,c)$)
are linearly dependent, with dependency relations given by the functional equation \eqn{N401}.

\begin{lemma}\label{dimEs}
For each $s \in \CC$, the  Lerch eigenspace $\sE_s$ is a two-dimensional space all of
whose members are real-analytic in $(a, c)$ on the domain $\Box^{\circ}$.
\end{lemma}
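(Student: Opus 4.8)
The plan is to bound $\dim\sE_s$ above and below separately, and to read off real-analyticity directly from the defining functions. For the upper bound I would use the functional equation in the form \eqref{N401}, namely $L_s^{\pm}=w_{\pm}\gamma^{\pm}(1-s)R_s^{\pm}$, to collapse each sign-pair. Where $\gamma^{\pm}(1-s)$ is a finite scalar this exhibits $L_s^{\pm}$ as a multiple of $R_s^{\pm}$, so $\langle L_s^{\pm},R_s^{\pm}\rangle$ is at most a line. At the (integer) values of $s$ where $\gamma^{\pm}(1-s)$ has a pole, the finiteness of the entire function $L_s^{\pm}$ on $\Box^{\circ}$ forces $R_s^{\pm}\equiv 0$, so the pair is again spanned by $L_s^{\pm}$ alone; here one may invoke \eqref{N224}, $\gamma^{\pm}(s)\gamma^{\pm}(1-s)=1$, to see which representative survives. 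Either way $\dim\langle L_s^{\pm},R_s^{\pm}\rangle\le 1$, whence $\sE_s=\langle L_s^{+},R_s^{+}\rangle+\langle L_s^{-},R_s^{-}\rangle$ has dimension at most two.

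For the lower bound I would first show that neither sign-pair collapses entirely to $0$: for every $s$ at least one of $L_s^{\pm},R_s^{\pm}$ is not identically zero on $\Box^{\circ}$. When $\Re(s)>0$ this holds because the $n$-th Fourier coefficient of $L_s^{\pm}(a,c)$ in the $a$-variable equals $|n+c|^{-s}$ (from \cite{LL1}), which never vanishes, so $L_s^{\pm}\not\equiv 0$. When $\Re(s)\le 0$ we have $\Re(1-s)>0$, hence $L_{1-s}^{\pm}\not\equiv 0$, and since $\hR$ is invertible, \eqref{200g} gives $R_s^{\pm}=\hR(L_{1-s}^{\pm})\not\equiv 0$. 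Combined with the previous paragraph, each sign-pair is exactly one-dimensional.

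To conclude $\dim\sE_s=2$ I would separate the two surviving lines by their parity under $\hR^2$ (the operator $\hJ$ of Theorem \ref{th2002}). A direct computation on the defining series, valid for $\Re(s)>1$ and then extended in $s$ by analytic continuation (both sides being analytic in $s$ for fixed noninteger $(a,c)$), gives $\hR^2 L_s^{\pm}=\pm L_s^{\pm}$; using \eqref{200g} and the commutation $\hR\,\hR^2=\hR^2\hR$ this propagates to $\hR^2 R_s^{\pm}=\pm R_s^{\pm}$. Thus every nonzero element of the $+$ pair is a $(+1)$-eigenvector of $\hR^2$ and every nonzero element of the $-$ pair a $(-1)$-eigenvector, so a nonzero representative of each pair is automatically independent of the other, forcing $\dim\sE_s=2$. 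Real-analyticity of every member on $\Box^{\circ}$ then follows from that of $L_s^{\pm}(a,c)$ off the integer lines (established in \cite{LL1}) together with the fact that $R_s^{\pm}(a,c)=e^{-2\pi i a c}L_{1-s}^{\pm}(1-c,a)$ is a product and composition of real-analytic functions with $(1-c,a)\in\Box^{\circ}$.

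I expect the main obstacle to be the non-collapse step, i.e.\ ruling out that a sign-pair degenerates to $0$ at the exceptional integer values of $s$ where $\gamma^{\pm}(1-s)$ vanishes and the corresponding $L_s^{\pm}$ (or $R_s^{\pm}$) is genuinely identically zero — for instance $L_0^{+}\equiv 0$. This is precisely where a single uniform formula fails, and the argument must combine the nonvanishing of the Fourier coefficients $|n+c|^{-s}$ for $\Re(s)>0$ with the functional equation transporting nonvanishing across $s\leftrightarrow 1-s$ via the invertibility of $\hR$. Once non-collapse is secured, the $\hR^2$-parity renders the independence and hence the dimension count routine.
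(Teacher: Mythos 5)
Your argument is correct, and its skeleton matches the paper's: the functional equation $L_s^{\pm}=w_{\pm}\ggG^{\pm}(1-s)R_s^{\pm}$ collapses each sign-pair to (at most) a line, and one then checks that two independent directions survive for every $s$. The paper's own proof is far terser: it simply observes that for $\Re(s)>0$ one may take the basis $\langle L_s^{+},L_s^{-}\rangle$ and for $\Re(s)<1$ the basis $\langle R_s^{+},R_s^{-}\rangle$, the two half-planes covering $\CC$, and asserts linear independence in each case. What you add is a justification of that independence via the $\hJ=\hR^{2}$ parity ($\hJ L_s^{\pm}=\pm L_s^{\pm}$, $\hJ R_s^{\pm}=\pm R_s^{\pm}$), which is exactly the mechanism the paper deploys later in Theorem \ref{th201}(iii), and an explicit treatment of the degenerate integer values of $s$ (finiteness of the entire function $L_s^{\pm}$ against a pole of $\ggG^{\pm}(1-s)$ forcing $R_s^{\pm}\equiv 0$), which mirrors the limiting argument in the proof of Theorem \ref{th201}(iv). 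The one place to be careful is the non-collapse step you flag yourself: the nonvanishing of $L_s^{\pm}$ for $\Re(s)>0$ does rest on the Fourier-coefficient computation $f_n(c)=|n+c|^{-s}$ (resp.\ $\sgn(n+c)|n+c|^{-s}$) from part I, which requires $L^{1}$-membership on horizontal lines for $\Re(s)>0$; granting that input (Theorem \ref{th31}(ii-a)), your transport of nonvanishing to $\Re(s)\le 0$ via the invertibility of $\hR$ and \eqref{200g} closes the argument. In short: same approach, with the independence and degeneration details made explicit rather than asserted.
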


\begin{proof}
To see that $\sE_s$  is exactly two-dimensional
for all $s \in \CC$, 
note for  fixed $\Re(s) >0$ one has $\sE_s= <L_s^{\pm}(a,c)>$, with both functions of $(a, c)$ being
 linearly independent.
For  fixed $\Re(s) < 1$,  one has $\sE_s = <R_s^{\pm}(a, c)>$,
with both functions of $(a, c)$ being linearly independent. 
\end{proof}

 The reason for introducing four functions in \eqn{201aa} is that at integer values of $s$ at least one of the
four functions $L_s^{\pm}(a, c), R_s^{\pm}(a, c)$ is identically zero,
corresponding to poles in the term $\gamma^{\pm}(1-s)$ in \eqn{N228}.

One can easily derive two alternate expressions for Lerch eigenspaces, valid  for all $s \in \CC$, 
which  remove the effect of the gamma factors. They are
\beql{202k}
\sE_{s} = < L^{+}( s, a, c), L^{-}( s, a, c), e^{-2 \pi i ac}L^{+}(1-s, 1-c, a), e^{-2 \pi i ac}L^{-}(1-s, 1-c, a)>
\eeq
and 
\beql{202h}
\sE_{s} = < \zeta_{\ast}(s, a, c), \hR(\zeta_{\ast})(1- s, a, c),  \hR^2(\zeta_{\ast})(s, a, c),
 \hR^3(\zeta_{\ast})(1-s, a, c)>.
\eeq
{ It is easy to check, using the functional equation, that each of the functions on the right side is
in $\sE_s$. It remains to verify that at each integer $s$ one has two linearly independent elements
in the $(a, c)$-variables.}

%
%

\subsection{Operator properties of Lerch eigenspaces $\sE_s$}\label{sec53}

We  show that the functions in the Lerch eigenspace are
simultaneous eigenfunctions of the two-variable Hecke operators,
the Lerch differential operator $\hD_L$ and the 
involution operator $\hJ= \hR^2$. We also show the vector spaces $\sE_s$
are permuted by related operators.

%
%

\begin{theorem}\label{th201} {\rm (Operators on Lerch eigenspaces)}
For each $s \in \CC$, the functions in the two-dimensional vector space $\sE_s$ have the following properties.
\begin{itemize}
\item[(i)] 
{\rm (Lerch differential operator eigenfunctions)}
Each $f \in \sE_s$ is an eigenfunction of the Lerch differential operator 
$\hD_L = \frac{1}{2 \pi i} \frac{\pt}{\pt a}
\frac{\pt}{\pt c} + c \frac{\pt}{\pt c}$ with
eigenvalue $-s$, i.e.,
\beql{201a}
\hD_L f (s, a, c) = -sf (s, a, c) 
\eeq
holds at all $(a, c) \in \RR \times \RR$, with both $a$ and $c$ non-integers.
\item[(ii)] 
{\em (Simultaneous Hecke operator eigenfunctions)}
Each $f \in \sE_s$ is a simultaneous eigenfunction of all 
two-variable Hecke operators
$\{ \hT_m : m \ge 1 \}$ with eigenvalue $m^{-s}$, in the sense that
\beql{202a}
\hT_m f = m^{-s} f 
\eeq
holds on the domain $(\RR \smallsetminus \frac{1}{m}\ZZ)
\times (\RR \smallsetminus \ZZ)$.
\item[(iii)] 
{\rm ($\hJ$-operator eigenfunctions)} The space $\sE_s$ admits 
the involution  $\hJ f (a,c) = e^{-2 \pi ia} f(1-a, 1-c)$,
under which it decomposes into one-dimensional eigenspaces 
$\sE_s = \sE_s^+ \oplus \sE_s^-$ with eigenvalues $\pm 1$, namely
$\sE_s^{\pm} = < F_{s}^{\pm}>$ and 
\beql{203a}
\hJ (F_s^{\pm}) = \pm F_s^{\pm}.
\eeq
\item[(iv)]  
{\rm ($\hR$-operator action)} 
The $\hR$-operator acts by $\hR(\sE_s)= \sE_{1-s}$ with
\beql{118}
\hR(L_s^{\pm} ) = w_{\pm}^{-1} \ggG^{\pm}(1-s) L_{1-s}^{\pm},
\eeq
where $w_{+}= 1$ and $w_{-}=i$.
\end{itemize}
\end{theorem}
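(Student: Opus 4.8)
The plan is to use linearity and reduce every assertion to the four spanning functions $L_s^{\pm}(a,c)$ and $R_s^{\pm}(a,c)$ of $\sE_s$, treating the two types by different mechanisms and transferring between them via the functional equation (\ref{N401}) and the relation $R_s^{\pm} = \hR(L_{1-s}^{\pm})$ of (\ref{200g}). Throughout I would work first in the range $\Re(s) > 1$, where the defining series of $L^{\pm}$ converge absolutely, and then invoke analyticity of both sides in $s$ (for fixed noninteger $(a,c)$) to extend each operator identity to all $s \in \CC$; every operator involved is either a finite sum ($\hT_m$, $\hR$, $\hJ$) or a fixed differential operator, hence preserves analyticity in $s$.

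For (i) and (ii) I would compute directly on $L_s^{\pm}$. Writing a generic term as $e^{2\pi i n a}|n+c|^{-s}$, one checks $\DLp(e^{2\pi i n a}|n+c|^{-s}) = -s\,\sgn(n+c)\,e^{2\pi i n a}|n+c|^{-s-1}$, and $\DLm$ then multiplies by $(n+c)$, so the composite $\hD_L = \DLm\DLp$ returns $-s\,e^{2\pi i n a}|n+c|^{-s-1}\sgn(n+c)(n+c) = -s\,e^{2\pi i n a}|n+c|^{-s}$, using $\sgn(n+c)(n+c)=|n+c|$; the signed terms of $L^{-}$ behave identically since $\sgn(n+c)$ is locally constant in $c$. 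Summing gives $\hD_L L_s^{\pm} = -s L_s^{\pm}$. For (ii) I substitute $(\frac{a+k}{m},mc)$ into the series and sum over $k$; the inner sum $\sum_{k=0}^{m-1}e^{2\pi i n k/m}$ vanishes unless $m\mid n$, collapsing the series to $m^{-s}L_s^{\pm}$ by the orthogonality already used in Proposition \ref{Nle31}. To pass to $R_s^{\pm} = \hR(L_{1-s}^{\pm})$ I would not recompute but use the operator identities: for (i), relation (\ref{306}) gives $\hD_L\hR = -\hR(\hD_L + I)$, whence $\hD_L R_s^{\pm} = -\hR\big((\hD_L+I)L_{1-s}^{\pm}\big) = -\hR\big(s\,L_{1-s}^{\pm}\big) = -s R_s^{\pm}$; for (ii), combining $\hR^2\hT_m = \hT_m\hR^2$ (from $\hT_m^{\vee} = \hT_m$ in Lemma \ref{le48}) with the definition of $\hS_m$ yields $\hT_m\hR = \hR\hS_m$, and then $\hS_m = \frac{1}{m}(\hT_m)^{-1}$ of Lemma \ref{le46} gives $\hT_m R_s^{\pm} = \hR\hS_m L_{1-s}^{\pm} = \hR\big(m^{-s}L_{1-s}^{\pm}\big) = m^{-s}R_s^{\pm}$.

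Part (iv) is essentially the functional equation repackaged: Proposition \ref{pr21} in the form (\ref{N401}), together with $\hR(L_s^{\pm}) = R_{1-s}^{\pm}$ and the reflection identity $\gamma^{\pm}(s)\gamma^{\pm}(1-s)=1$ of (\ref{N224}), gives $\hR(L_s^{\pm}) = w_{\pm}^{-1}\gamma^{\pm}(1-s)L_{1-s}^{\pm}$, which is (\ref{118}); since $\hR$ is a bijection carrying the generators of $\sE_s$ into $\sE_{1-s}$ between two-dimensional spaces, $\hR(\sE_s)=\sE_{1-s}$. For (iii) I would establish $\hJ L_t^{\pm} = \pm L_t^{\pm}$ by explicit series manipulation: applying $\hJ = \hR^2$, i.e.\ $\hJ f(a,c) = e^{-2\pi i a}f(1-a,1-c)$, to $L^{\pm}(t,a,c)$ and reindexing $n \mapsto -n-1$ turns $e^{-2\pi i(n+1)a}$ into $e^{2\pi i j a}$ with $j=-(n+1)$, and for $L^{-}$ sends $\sgn(n+1-c)$ to $-\sgn(j+c)$, recovering $+L_t^{+}$ and $-L_t^{-}$ respectively. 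Since $\hJ=\hR^2$ commutes with $\hR$, the same eigenvalues hold on $R_s^{\pm} = \hR(L_{1-s}^{\pm})$. As $\{L_s^{\pm}\}$ (for $\Re(s)>0$) or $\{R_s^{\pm}\}$ (for $\Re(s)<1$) is a basis by Lemma \ref{dimEs}, the $+1$ and $-1$ eigenvectors are each nonzero, so $\sE_s = \sE_s^{+}\oplus\sE_s^{-}$ with both summands one-dimensional.

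The main obstacle is uniformity across all $s \in \CC$: at the integer values of $s$ where $\gamma^{\pm}(1-s)$ has a zero or pole, one of the four functions degenerates (vanishes identically), so no single basis works throughout. The remedy is to perform the direct computations on $L_s^{\pm}$ only where they form a basis ($\Re(s)>0$) and on $R_s^{\pm}$ where they do ($\Re(s)<1$), relying on Lemma \ref{dimEs} for two-dimensionality, and to obtain every operator identity at the remaining points by analytic continuation in $s$. A secondary point needing care is the bookkeeping of discontinuity lines: the identities hold only off the relevant grid of horizontal and vertical integer lines where the extended functions and their Hecke images are defined, so one must verify that substituting $(\frac{a+k}{m},mc)$ keeps all arguments off that lattice on the asserted domain.
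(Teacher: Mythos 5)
Your proposal is correct and follows the paper's overall strategy: verify each identity on the generators $L_s^{\pm}$ by direct series computation, handle the range $\Re(s)<1$ via the second basis $R_s^{\pm}=\hR(L_{1-s}^{\pm})$, and use Lemma \ref{dimEs} so that the two bases together cover every $s\in\CC$. The one genuinely different step is your treatment of part (ii) on the basis $R_s^{\pm}$: the paper redoes the Dirichlet-series computation from scratch for $e^{-2\pi i ac}\zeta(1-s,1-c,a)$ (absolutely convergent for $\Re(s)<0$, then by partial sums in the conditionally convergent range), whereas you deduce $\hT_m R_s^{\pm}=m^{-s}R_s^{\pm}$ purely from the operator identities $\hT_m\hR=\hR\hS_m$ (a consequence of $\hT_m^{\vee}=\hT_m$, Lemma \ref{le48}) and $\hS_m=\frac{1}{m}(\hT_m)^{-1}$ (Lemma \ref{le46}) applied to the already-established eigenvalue equation for $L_{1-s}^{\pm}$. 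This is a legitimate and arguably tidier route --- it mirrors exactly how both you and the paper transfer part (i) via the commutator relation \eqref{306} --- at the mild cost of checking that $L_{1-s}^{\pm}$, extended by twisted-periodicity, lies in $\sP^{\ast}(\RR\times\RR)$ where those lemmas hold, and that the almost-everywhere identities there upgrade to pointwise ones by continuity off the lattice lines (a point you flag). Two smaller divergences: for $0<\Re(s)\le 1$ you invoke analytic continuation in $s$ where the paper uses partial sums of the conditionally convergent series (both work); and in (iv) at integer $s$ your argument --- $\hR$ maps all four generators into $\sE_{1-s}$, so $\hR(\sE_s)\subseteq\sE_{1-s}$, and injectivity of $\hR$ plus the dimension count forces equality --- is more direct than the paper's limiting argument as $s\to s_0$, and correctly sidesteps the degeneration of formula \eqref{118} at those points.
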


This result  yields the following consequence.

%
%

\begin{coro}\label{coro201} {\rm (Invariance of $\sE_s$ under  Hecke operator families)}
For each $s \in \CC$ the Lerch eigenspace $\sE_s$ is invariant under all four
families of real-analytic two-variable Hecke operators
$\{ \hT_m: m \ge 1\},$  $\{ \hT_m^{\vee}: m \ge 1\}$, $\{\hS_m: m \ge 1\},$
and $\{\hS_m^{\vee}: m \ge 1\}.$
\end{coro}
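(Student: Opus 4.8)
The plan is to reduce the invariance claim for all four families to the single fact, already established in Theorem \ref{th201}(ii), that $\sE_s$ is $\hT_m$-invariant, by exploiting the $\hR$-conjugation that defines the other three families together with the action $\hR(\sE_s) = \sE_{1-s}$ from Theorem \ref{th201}(iv). First I would record the base case: by Theorem \ref{th201}(ii) every $f \in \sE_s$ satisfies $\hT_m f = m^{-s} f$, so $\hT_m(\sE_s) \subseteq \sE_s$ for all $m \ge 1$. Applying this statement with $s$ replaced by $1-s$ (legitimate, since $\sE_{1-s}$ is itself a two-dimensional Lerch eigenspace) gives $\hT_m(\sE_{1-s}) \subseteq \sE_{1-s}$ as well. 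I would also record the consequences of $\hR(\sE_s)=\sE_{1-s}$ under iteration, using $\hR^4 = \hI$: namely $\hR^{-1}(\sE_s) = \sE_{1-s}$, $\hR^2(\sE_s) = \sE_s$, and $\hR^{-2}(\sE_s)=\sE_s$.

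Next I would handle the three conjugated families by composing maps along the chain governed by $\hR$. Recall $\hS_m = \hR \hT_m \hR^{-1}$, $\hT_m^{\vee} = \hR^2 \hT_m \hR^{-2}$, and $\hS_m^{\vee} = \hR^3 \hT_m \hR^{-3}$. For $f \in \sE_s$: one has $\hR^{-1} f \in \sE_{1-s}$, then $\hT_m(\hR^{-1} f) \in \sE_{1-s}$, and finally $\hR$ returns this to $\sE_s$, so $\hS_m f \in \sE_s$. For $\hT_m^{\vee}$: $\hR^{-2} f \in \sE_s$, $\hT_m$ preserves $\sE_s$, and $\hR^2$ preserves $\sE_s$, so $\hT_m^{\vee} f \in \sE_s$. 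For $\hS_m^{\vee}$: $\hR^{-3} f = \hR f \in \sE_{1-s}$, $\hT_m$ preserves $\sE_{1-s}$, and $\hR^3 = \hR^{-1}$ maps $\sE_{1-s}$ back to $\sE_s$, so $\hS_m^{\vee} f \in \sE_s$. This yields invariance under all four families.

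As a cross-check, and a shortcut, one can instead invoke Lemma \ref{le48}, which asserts $\hT_m^{\vee} = \hT_m$ and $\hS_m^{\vee} = \hS_m$, thereby reducing the four families to just $\hT_m$ and $\hS_m$; then $\hS_m$-invariance follows either from the conjugation argument above or from $\hS_m = \frac{1}{m}(\hT_m)^{-1}$ (Theorem \ref{th2001}), noting that on $\sE_s$ the operator $\hT_m$ acts as the nonzero scalar $m^{-s}$ and is hence invertible, with $\hS_m f = m^{s-1} f$. I expect there is no serious obstacle here, since Theorem \ref{th201} does all the analytic work; the only point requiring care is domain-theoretic. The eigenvalue relations and the identities of Lemma \ref{le48} and the $\hR$-conjugation hold only off the integer lattice lines, so I would observe that every member of $\sE_s$ lies in the extended twisted-periodic space $\sP^{\ast}(\RR \times \RR)$ (with conductor dividing $1$, by Theorem \ref{th2003}(i)), which is exactly the setting in which those identities were proved; once this is noted, the corollary is a formal consequence of Theorem \ref{th201}.
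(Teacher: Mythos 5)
Your proposal is correct and follows essentially the same route as the paper: the published proof likewise invokes Theorem \ref{th201}(ii) for the case $j=0$ and then uses Theorem \ref{th201}(iv) together with the fact that $\hR$ is applied an even number of times in each conjugate $\hR^j \hT_m \hR^{-j}$ to conclude the image returns to $\sE_s$. Your additional shortcut via Lemma \ref{le48} and $\hS_m=\frac{1}{m}(\hT_m)^{-1}$ is a valid cross-check but not needed.
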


\begin{proof} 
These four sets of operators are $\{ \hR^j \hT_m \hR^{-j} \}$ for $j=0,1 ,2, 3$. The case $j=0$ is covered by (ii). 
By  (iv) the effect of $\hR$ is to map the four generating functions for $\sE_s$
to a permutation of them spanning $\sE_{1-s}$. Since the $\hR$
operator is  applied an even number of times for $j=1, 2, 3$, the final image is always $\sE_s$.
\end{proof}

To prove Theorem \ref{th201} we  first 
 determine the action of $\DLp$ and $\DLm$ on the eigenspace $\sE_s$.

\begin{lemma}\label{le302} {\rm (Raising and Lowering Actions)}

\begin{itemize}
\item[(i)]
For each $s \in \CC$, the operator 
$\DLm = \frac{1}{2 \pi i} \frac{\pt}{\pt a} +c$ has 
\beql{310}
(\DLm L_s^{\pm} )(s, a,c) ) = L_{s-1}^{\mp} (s, a,c ),
\eeq
so that it takes $\DLm ( \sE_s ) = \sE_{s-1}$.
\item[(ii)]
For each $s \in \CC$ the operator 
$\DLp = \frac{\pt}{\pt c}$ has
\beql{311}
(\DLp L_s^{\pm}) (s, a,c) = - s L_{s+1}^{\mp}(s, a,c),
\eeq
so that it takes  $\DLp(\sE_s ) = \sE_{s+1}$.
\end{itemize}
\end{lemma}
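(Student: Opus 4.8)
The plan is to verify the two differential-operator identities directly from the Dirichlet-series definitions of $L^{\pm}$ in the region of absolute convergence, and then propagate them to all $s \in \CC$ by analytic continuation, finally reading off the eigenspace statements. First I would work in the range $\Re(s) > 1$, where the defining series
$$
L^{+}(s,a,c) = \sum_{n} e^{2\pi i na}|n+c|^{-s}, \qquad
L^{-}(s,a,c) = \sum_{n} \sgn(n+c)\, e^{2\pi i na}|n+c|^{-s}
$$
converge absolutely and may be differentiated term by term for noninteger $c$. The entire computation rests on two elementary pointwise identities: for $n+c \neq 0$ one has $\frac{\pt}{\pt c}|n+c|^{-s} = -s\,\sgn(n+c)\,|n+c|^{-s-1}$, and $(n+c)|n+c|^{-s} = \sgn(n+c)\,|n+c|^{-(s-1)}$.

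For the lowering operator $\DLp = \frac{\pt}{\pt c}$, differentiating $L^{+}$ term by term and applying the first identity inserts a factor $-s\,\sgn(n+c)$, turning the $L^{+}$-series at parameter $s$ into $-s$ times the $L^{-}$-series at parameter $s+1$, so that $\DLp L_s^{+} = -s L_{s+1}^{-}$; for $L^{-}$ the $\sgn(n+c)$ already present squares to $1$ away from $n+c=0$, giving $\DLp L_s^{-} = -s L_{s+1}^{+}$. For the raising operator $\DLm = \frac{1}{2\pi i}\frac{\pt}{\pt a} + c$, the term $\frac{1}{2\pi i}\frac{\pt}{\pt a}$ brings down a factor $n$ and adding the $c$-term assembles $(n+c)$; the second identity then converts the $s$-series into the opposite-sign series at parameter $s-1$, yielding $\DLm L_s^{+} = L_{s-1}^{-}$ and (using $(n+c)\sgn(n+c) = |n+c|$) $\DLm L_s^{-} = L_{s-1}^{+}$. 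These are exactly the claimed formulas \eqref{310} and \eqref{311} on the generators $L_s^{\pm}$.

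To reach arbitrary $s$ I would invoke analytic continuation: for each fixed noninteger pair $(a,c)$, both sides of each identity are meromorphic in $s$, since Part I \cite{LL1} supplies the continuation of $L^{\pm}(s,a,c)$ and establishes that the extended functions are jointly real-analytic in $(a,c)$ with derivatives depending analytically on $s$. Having checked agreement on the half-plane $\Re(s) > 1$, the identity theorem forces agreement for all $s$. The eigenspace statements then follow: since $\DLm$ sends $L_s^{\pm}$ to $L_{s-1}^{\mp}$, and since $R_s^{\pm}$ is (away from the gamma-function poles) a nonzero scalar multiple of $L_s^{\pm}$ by the functional equation \eqref{N401}, the image $\DLm(\sE_s)$ is spanned by $L_{s-1}^{+}$ and $L_{s-1}^{-}$, which span the two-dimensional space $\sE_{s-1}$; the identical argument with $\DLp$ gives $\DLp(\sE_s) = \sE_{s+1}$.

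The step I expect to be the main obstacle is the analytic continuation, specifically justifying that the partial differential operators $\frac{\pt}{\pt a}$ and $\frac{\pt}{\pt c}$ commute with continuation in $s$, i.e. that the $(a,c)$-derivative of the continued function equals the continuation of the derivative. Here I would lean on the joint-analyticity results of Part I rather than reproving them; alternatively one can represent the continued $L^{\pm}(s,a,c)$ by a Cauchy integral in $s$ and differentiate under the integral in $(a,c)$, but citing the regularity already established is cleanest. A secondary point needing a remark is the behavior at integer $s$, where a gamma factor in \eqref{N401} may vanish and $R_s^{\pm}$ need not be a nonzero multiple of $L_s^{\pm}$; there I would fall back on the gamma-free spanning set \eqref{202k} to confirm that the image is still all of the two-dimensional target eigenspace.
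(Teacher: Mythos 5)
Your proposal is correct and follows essentially the same route as the paper: term-by-term differentiation of the Dirichlet series in the region of absolute convergence $\Re(s)>1$, followed by extension to all $s\in\CC$ (the paper primarily switches to the basis $R_s^{\pm}$, whose series converge for $\Re(s)<1$, but explicitly offers analytic continuation in the $s$-variable as an alternative, which is the route you take; your handling of the integer-$s$ spanning issue via the gamma-free generators matches the paper's conventions as well). One cosmetic caution: the paper's naming is the opposite of yours --- it calls $\DLp=\frac{\pt}{\pt c}$ the \emph{raising} operator (it raises $s$ to $s+1$) and $\DLm$ the \emph{lowering} operator.
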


\begin{proof}
We establish (i) and (ii) for $\Re (s) > 1$ using the basis $L_s^{\pm}(a,c)$
for $\sE_s$ and the
Dirichlet series representations
\beql{312}
\zeta (s,a,c) = \sum_{n=0}^\infty e^{2 \pi ina} (n+c)^{-s} 
\eeq
and
\beql{312b}
e^{-2 \pi i a} \zeta(s, 1-a, 1-c) = \sum_{n=1}^{\infty} e^{-2\pi i n a}(n-c)^{-s}.
\eeq
Calculations can be done term-by-term on these
Dirichlet series  to verify \eqn{310}
and \eqn{311}.  For real $0 < a < 1$ and $0 < c < 1$ 
the Dirichlet series \eqn{312} converges conditionally for
$\Re (s) > 0$ and differentiating term-by-term can still be justified.
(Alternatively, we can analytically continue in the $s$-variable.)

For $\Re (s) < 0$ (resp. $\Re(s) <1$) we apply similar reasoning to the basis
 $R_s^{\pm}(a,c)$ for $\sE_s,$ which is possible since these functions
have absolutely (resp. conditionally)  convergent Dirichlet series expansion in this range. 
\end{proof}


\paragraph{\em Proof of Theorem~\ref{th201}.}
(i) Since $D_L = \DLm \DLp$, by Lemma~\ref{le302} the relation
\beql{319}
\hD_L L_s^{\pm} = -s L_{s}^{\pm}
\eeq
follows for all $s \in \CC$.
Since $\sE_s = <L_s^{+}(a, c) , L_s^{-}(a, c) >$ for $\Re (s) > 0$, this
proves (i) for $\Re (s) > 0.$ 
In the case $ \Re(s) < 1$
we use the alternate basis
$$\sE_s = <~e^{- 2 \pi i a c}L_{1-s}^+ (1- c,a)~,~ 
e^{-2 \pi i ac}L_{1-s}^{-}(1-c, a)>,
$$
see \eqref{202k}.

Since
$$e^{-2 \pi ia c} L_{1-s}^{\pm} (1-c,a) = \hR ( L_{1-s}^{\pm} (a,c)), $$
we may apply Lemma \ref{le301} (iii) and \eqn{319} to get
\begin{eqnarray*}
\hD_L (\hR (L_{1-s}^{\pm} (a,c))) & = & - \hR \hD_L (L_{1-s}^{\pm} (a,c)) 
- \hR L_{1-s}^{\pm} (a,c) \\
& = & - \hR (- (1 - s ) L_{1-s}^{\pm} (a,c)) - \hR L_{1-s}^{\pm} (a,c) \\
& = & - s \hR ( L_{1-s}^{\pm} (a,c)) ~.
\end{eqnarray*}
Thus  (i) follows in this case.\smallskip

(ii)  It suffices to verify that $\hT_m f = m^{-s} f$ on a basis of $\sE_s$.
For $\Re (s) > 0$ we use the basis
$\sE_s = < \zeta (s,a,c) , e^{-2 \pi ia} \zeta (s, 1-a, 1-c) >$.
Using the Dirichlet series expansion \eqn{312} we have, formally,
\begin{eqnarray*}
\hT_m \zeta (s,a,c) & = & \frac{1}{m} \sum_{k=0}^{m-1} 
\zeta \left( s, \frac{a+k}{m} , mc \right) \\
& = &
\frac{1}{m} \sum_{k=0}^{m-1} \sum_{n=0}^\infty 
e^{2 \pi i n ( \frac{a+k}{m} )} (n+mc)^{-s} \\
& = & \sum_{n=0}^\infty e^{2 \pi i \frac{na}{m}} \left( \frac{1}{m}
\sum_{k=0}^{m-1} e^{2 \pi i \frac{kn}{m}} \right) (n+mc)^{-s} ~.
\end{eqnarray*}
The inner sum is 0 unless $n \equiv 0$ $(\bmod~m)$, 
in which case it is 1, so that, setting $n=ml$, we obtain
$$
\hT_m \zeta (s,a,c) = \sum_{l=0}^\infty e^{2 \pi ila} (m(l+c))^{-s} = 
m^{-s}\zeta (s,a,c) ~.
$$
These operations are easily justified when $\Re (s) > 1$ and the sum 
converges absolutely.
They also hold for $0 < \Re (s) \le 1$ when the sums converge conditionally,
by using finite sums
$$\sum_{n=0}^{Nm} e^{2 \pi i n ( \frac{a+k}{m})} (n+mc)^{-s}$$
and then letting $N \to \infty$.
A similar argument shows that
$$
\hT_m (e^{-2 \pi ia} \zeta (s,1-a, 1-c)) = 
m^{-s} e^{-2 \pi ia} \zeta (s,1-a, 1-c)$$
when $\Re (s) > 0$, and this  completes the case $\Re (s) > 0$. For
the case $\Re (s) < 1$ { we work with a different basis of $\sE_s$: }
$$\sE_s = < e^{-2 \pi iac} \zeta (1-s, 1-c, a) ,~
e^{-2 \pi iac + 2 \pi ic} \zeta (1-s, c,1-a) >~.
$$
{ The proof is actually simpler. We shall show details for $e^{-2 \pi iac} \zeta (1-s, 1-c, a) : = F(a,c)$, 
the other basis function being similar. First assume that $\Re (s) < 0$ so that $\Re (1-s) > 1$ 
and $F(a,c)$ is given by an absolutely convergent series 
$$ F(a, c) = e^{-2 \pi iac} \sum_{n=0}^{\infty} e^{2 \pi i n(1-c)} (n + a)^{-(1-s)}.$$
Then 
\begin{eqnarray*}
\hT_m F(a,c) & = & \frac{1}{m} \sum_{k=0}^{m-1} e^{-2 \pi i(a+k)c} \sum_{n=0}^\infty e^{2 \pi i n(1-mc)}(n + \frac{a+k}{m})^{-(1-s)} \\
 & = & \frac{1}{m} m^{1-s} e^{-2 \pi iac} \sum_{n=0}^\infty \sum_{k=0}^{m-1} (mn +k +a)^{-(1-s)} e^{-2 \pi i c(mn+k)} \\
& = & m^{-s} F(a,c),
\end{eqnarray*}
as desired. For $0 \le \Re (s) < 1$ the series for $F(a,c)$ converges conditionally and we use the partial sum $\sum_{n=0}^N$ and let $N \to \infty$ to approach $F(a,c)$ as in the case $0 < \Re (s) \le 1$. This proves (ii)} \\  

(iii) Using $\hJ f(a,c) =e^{-2 \pi ia} f(1-a, 1-c)$ 
and the definition \eqn{200b} of $L_s^{\pm}$ it is easy to see that
\beql{320}
\hJ( L_s^{\pm}) = \pm L_s^{\pm} 
\eeq
holds for all $s \in \CC$.
This shows $\hJ : \sE_s \to \sE_s$ for all 
$s \in \CC \smallsetminus \{0, -1,-2, \ldots, \}$, where
$\sE_s = <F_s^+, F_s^{-}>$, and gives a splitting
$\sE_s = \sE_s^+ \oplus \sE_s^-$.
For the remaining values, we use the basis $R_s^\pm(a,c)$ for
$\sE_s$, and
a straightforward computation shows that
\beql{321}
\hJ (R_s^\pm) = \pm R_s^\pm
\eeq
holds for
all $s \in \CC$.
This shows that $\hJ : \sE_s \to \sE_s$ for 
$s \in \CC \smallsetminus \{ 1,2,3 \ldots \}$ where
$\sE_s = < R_s^+, R_s^- >$, and it gives a splitting 
$\sE_s = \sE_s^+ \oplus \sE_s^-$.
The functions $L_s^+$ and $R_s^+$ (resp. $L_s^-$ and $R_s^-$) 
are proportional for each $s \in \CC$
by the functional equation.\smallskip

(iv) The relation
\beql{313}
w_\pm \hR (L_s^{\pm})(a,c) =
{\ggG^\pm (s)} L_{1-s}^{\pm} (a,c) 
\eeq
 with $w_+= 1$ and $w_- = i$ 
 follows from the symmetrized functional equation (\ref{N228})
by replacing $s$ with $1-s$.
Since $\hR^4 = \hI$, the image of the vector space $\sE_s$ is 
necessarily a vector space
$\hR ( \sE_s )$ of the same dimension. We know that ${\rm dim} (\sE_s) =2$ by Lemma \ref{dimEs}.\medskip

If $s \in \CC \smallsetminus \ZZ$, then the coefficient on the 
right side of \eqn{313} is in $\CC \smallsetminus \{0\}$
for both choices of sign $\pm$ ,
which exhibits an explicit isomorphism $\hR : \sE_s \to \sE_{1-s}$.
If $s=s_0 \in \ZZ$, then exactly one of $\Gamma_{\RR}^\pm (s)$ and 
$\Gamma_{\RR}^\pm (1-s)$ has a pole at $s_0$, and
\eqn{313} is not well-defined.
What happens is that one of $L_s^{\pm}(a,c)$ or $L_{1-s}^{\pm} (a,c)$
is identically zero.
This can be seen by studying the functional equation \eqn{313} 
as $s$ varies, approaching the value $s_0$.
One side of the equation is defined and finite at $s_0$;
the other side must, for each fixed $(a,c) \in (0,1) \times (0,1)$ 
have the corresponding
$F_s^{\pm}(a,c) \to 0$ as $s \to s_0$, since it is an entire function of $s$.
However we can still see that 
$\hR (\sE_{s_0} ) = \sE_{1-s_0}$, by noting that
$\hR (\sE_s ) \subseteq \sE_{1-s}$ for $s \not\in \ZZ$, and letting
$s \to s_0$ we obtain
$\hR (\sE_{s_0} ) \subseteq \sE_{1-s_0} $, by analytic continuation in $s$.
The image is necessarily two-dimensional, since $\hR$ is invertible,
so  $\hR (\sE_{s_0} ) = \sE_{1-s_0}$ holds for $s=s_0$. 
$~~~\Box$ 

%
%

\subsection{Analytic properties of Lerch eigenspaces $\sE_s$}\label{sec54}

We establish the following  analytic properties of members of $\sE_s$,
which we deduce from results  in part I.

\begin{theorem}\label{th31}
{\rm (Analytic Properties of Lerch eigenspaces)}
For fixed $s \in \CC$ the functions in the Lerch eigenspace $\sE_s$ are real analytic
functions of $(a, c)$ on  $(\RR \smallsetminus \ZZ) \times (\RR \smallsetminus \ZZ)$,
which may be discontinuous at values $a, c \in \ZZ$. They 
have the following properties.
\begin{itemize}
\item[(i)] {\rm (Twisted-Periodicity Property)} All functions  $F(a,c)$ in $\sE_{s}$  satisfy the 
twisted-periodic functional equations
\begin{eqnarray} 
F(a+1,~c~) & =& ~~~~~~~~ F(a,c), \label{118c} \\
F(~a~, c+1)& = & e^{-2\pi i  a} F(a, c). \label{118d}
\end{eqnarray}

\item[(ii)] {\rm (Integrability Properties)}

(a) If $\Re(s) >0$,  then 
for each noninteger $c$ all functions  in $\sE_{s}$
have 
$f_c(a) :=F(a, c) \in L^{1}[(0,1), da],$ and  
all  their Fourier coefficients
\beql{118e}
f_n(c) := \int_{0}^1 F(a,c) e^{-2\pi i n a} da, ~~~~~~~n \in \ZZ,
\eeq
are  continuous
functions of $c$ on $0<c<1$.

(b) If $\Re(s)<1$,  then for each noninteger $a$ all functions  in $\sE_{{s}}$
have $ g_a(c):=e^{2 \pi i a c}F(a, c) \in L^{1}[(0,1), dc],$
and all Fourier coefficients
\beql{118f}
g_n(a) := \int_{0}^1 e^{2\pi i a c}F(a,c) e^{-2\pi i n c} dc, ~~~~~~~n \in \ZZ,
\eeq
 are  continuous
functions of $a$ on $0< a<1$.

(c) If $0 < \Re(s) < 1$ then all functions in $\sE_{s}$ belong to 
$L^{1}[\Box , da dc]$.
\end{itemize}
\end{theorem}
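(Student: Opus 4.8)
The plan is to reduce every assertion to the two explicit spanning functions of $\sE_s$ and then to import the corresponding one-variable analytic facts about the (extended) Lerch zeta function from Part I. By linearity it suffices to verify each statement on a basis, and the guiding observation is that the range of $\Re(s)$ dictates which of the two natural bases converges: for $\Re(s)>0$ one works with $\{L_s^{+},L_s^{-}\}$, while for $\Re(s)<1$ one works with $\{R_s^{+},R_s^{-}\}$, both being genuine bases by Lemma \ref{dimEs}. Throughout I would quote \cite[Theorem 2.2]{LL1} and \cite[Theorem 2.4]{LL1} for the analytic input and use the $\hR$-symmetry $\hR(\sE_s)=\sE_{1-s}$ of Theorem \ref{th201}(iv) to trade the case $\Re(s)<1$ for the case $\Re(1-s)>0$.

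For the real-analyticity on $(\RR\smallsetminus\ZZ)\times(\RR\smallsetminus\ZZ)$ and for part (i), I would invoke \cite[Theorem 2.2]{LL1}, which presents the extended Lerch functions $L_{\ast}^{\pm}(s,a,c)$ as real-analytic off the integer grid and establishes the twisted-periodicity relations \eqref{N226a}--\eqref{N226b}. Since $R_s^{\pm}=\hR(L_{1-s}^{\pm})$ by \eqref{200g}, and $\hR$ preserves both real-analyticity off the integer lines and the twisted-periodicity property (Lemma \ref{lem32}), every element of $\sE_s$ inherits these properties by linearity.

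For part (ii)(a), with $\Re(s)>0$, I would argue directly on $L_s^{\pm}$. For fixed noninteger $c$ the defining series $L^{+}(s,a,c)=\sum_{n}e^{2\pi i na}|n+c|^{-s}$ displays the $n$-th Fourier coefficient in $a$ as $|n+c|^{-s}$ (and as $\sgn(n+c)|n+c|^{-s}$ for $L^{-}$), which is continuous in $c$ on $(0,1)$ because $n+c\neq 0$ for every $n\in\ZZ$. The membership $f_c(a)\in L^1[(0,1),da]$ and the legitimacy of identifying these with the actual Fourier coefficients of $f_c$ (i.e.\ term-by-term integration) are exactly what \cite[Theorem 2.4]{LL1} supplies. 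Part (ii)(b) follows by the same computation after passing to the basis $R_s^{\pm}$: by \eqref{200c} one has $g_a(c)=e^{2\pi i ac}R_s^{\pm}(a,c)=L^{\pm}(1-s,1-c,a)$, so $g_a(c)$ is a Lerch function at parameter $1-s$ with $\Re(1-s)>0$, now expanded in the $c$-variable, whose Fourier coefficients work out to $|a-n|^{-(1-s)}$, continuous in $a$ on $(0,1)$; this is precisely case (a) read through the $\hR$-symmetry. Finally, for part (ii)(c), note that the open strip $0<\Re(s)<1$ contains no integer $s$, so both bases are available, and I would simply quote \cite[Theorem 2.4]{LL1}, which asserts $L^{\pm}(s,a,c)\in L^{1}[\Box,da\,dc]$ in the critical strip; linearity then gives $\sE_s\subset L^{1}(\Box)$.

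I expect the main obstacle to be part (c). Parts (a) and (b) only control one variable at a time, so the joint integrability over $\Box$ --- in particular the combined boundary behavior at the four corners, where the singularities of $L_s^{\pm}$ accumulate --- does not formally follow from the slicewise statements without additional uniform control, and this is exactly the boundary analysis delegated to \cite[Theorem 2.4]{LL1}. A second, more delicate point is the justification of the term-by-term integration used in (a) and (b): for $0<\Re(s)\le 1$ the series converge only conditionally, so one cannot appeal to absolute convergence and must instead rely on the summation-by-parts (Abel) estimates underlying the Part I construction of $L_{\ast}^{\pm}$.
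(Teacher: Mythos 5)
Your overall strategy---verify everything on the spanning functions $L_s^{\pm}$ (for $\Re(s)>0$) and $R_s^{\pm}$ (for $\Re(s)<1$), read off the Fourier coefficients from the defining series, and use the $\hR$-symmetry to trade one half-plane for the other---is the same as the paper's, and part (i) together with the real-analyticity claim is handled exactly as you describe. But there are two concrete gaps in part (ii). First, the slicewise $L^1$ membership is not what \cite[Theorem 2.4]{LL1} supplies: that theorem is the joint statement $L^{\pm}(s,\cdot,\cdot)\in L^1(\Box, da\,dc)$ in the critical strip, which the paper quotes only for part (c). The paper instead derives the $L^1$-on-lines claims from \cite[Theorem 6.1]{LL1}, which for $s\notin\ZZ$ writes $L^{\pm}(s,a,c)$ as a function continuous on the closed square plus explicit multiples of the four singular terms $c^{-s}$, $e^{-2\pi i a}(1-c)^{-s}$, $e^{-2\pi i (1-a)c}(1-a)^{s-1}$, $e^{-2\pi i ac}a^{s-1}$; one then checks by inspection which of these lie in $L^1$ on horizontal, respectively vertical, slices for the stated ranges of $\Re(s)$. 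Without some such decomposition isolating the $a^{s-1}$-type blow-up at the edges of the square, the conditional convergence of the Fourier series does not by itself give $f_c(a)\in L^1[(0,1),da]$, and your appeal to ``Abel estimates underlying the Part I construction'' leaves exactly this point open rather than resolving it.

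Second, because the decomposition of \cite[Theorem 6.1]{LL1} excludes integer $s$ (some of the subtracted terms acquire poles there), the paper must treat the integer cases separately: for (ii-a) at $s=n\ge 2$ the Fourier series with the $c^{-s}$ term removed is absolutely convergent, while $s=1$ requires Rohrlich's lemma (quoted from Milnor \cite[Lemma 4]{Mi83}) to conclude that $\zeta_{\ast}(1,a,c)-c^{-1}=\sum_{n\ge 1}e^{2\pi i na}/(n+c)$ lies in $L^1[(0,1),da]$ with the expected Fourier coefficients; the negative-integer cases of (ii-b) are handled symmetrically. Your proposal makes no distinction between integer and non-integer $s$, so these cases are unaddressed. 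Your closing paragraph correctly identifies that (c) does not follow from (a) and (b) and that term-by-term integration is the delicate step, but identifying the obstacle is not the same as supplying the missing decomposition and the integer-$s$ arguments that the paper uses to overcome it.
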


\begin{proof}
These properties will be deduced from results in part I.

 (i)  Theorem 2.2 of part I \cite{LL1} established
the twisted-periodicity functional equations for $\zeta_{\ast}(s, a, c)$. It follows
by repeated applications of $\hR$ that these functional equations also
hold for
$ e^{-2\pi i a c} \zeta(1-s, 1-c , a)$, 
$e^{-2\pi i a} \zeta(s, 1-a, 1-c)$, $e^{-2\pi i a( c+1)} \zeta(1-s, c, 1-a).$
These four functions span the two-dimensional vector space
$\sE_{s}$ for every $s \in \CC$.\\

(ii) Part I \cite[Theorem 6.1]{LL1} shows
for $s \in \CC \smallsetminus \ZZ$, that  subtracting off suitable members of the four basis functions
\begin{equation}
c^{-s},  e^{-2\pi i a}(1-c)^{-s}, 
e^{-2\pi i (1-a) c} (1-a)^{s-1}, e^{-2\pi i a c} a^{s-1}
\end{equation}
from the two functions $L^{\pm}(s, a, c)$ yields  functions
$\tilde{L}^{\pm}( s,a,c)$  that are continuous on the
closed unit square $\Box= [0,1]\times[0,1]$, and which therefore belong
to $L^2[ \Box, da dc].$ Since 
\[
\zeta_{\ast}(s, a, c)= L^{+}( s, a, c) + L^{-}( s, a, c),
\]
it also has a continuous extension to the closed unit square after
subtracting off suitable multiples of these
four functions. In fact only three of the four
basis functions are needed in the subtraction, for $\zeta_{\ast}(s, a, c)$  the function
$e^{-2\pi i a}(1-c)^{-s}$ is omitted, see \cite[Theorems 5.1 and  5.2]{LL1}.
At the  integer values of $s$ excluded, some of the terms
subtracted off have poles. \smallskip

We now consider the effect of the singularities of
these basis functions on the four sides of the unit square on determining
absolute integrability of $\zeta_{\ast}(s, a,c)$
 on horizontal and vertical lines in the unit square. 
In the $a$-direction,  for all $s \in \CC$  the two functions 
$c^{-s}$ and $e^{-2\pi i a}(1-c)^{-s}$ each  lie in $L^{1}[(0,1), da]$ for each fixed
value $0< c<1$, while  for $\Re(s) >0$  the two  functions
$e^{-2\pi i a c} (1-a)^{s-1}$ and $e^{-2\pi i a c + 2 \pi i c} a^{s-1}$ lie in
$L^{1}[(0,1), da]$ for each fixed
value $0< c<1$.  In  the $c$-direction, for $\Re(s) <1$ the two functions
$c^{-s},  e^{-2\pi i a}(1-c)^{-s}$ lie in $L^{1}[(0,1), dc]$ for each fixed
value $0< a<1$, while for all $s \in \CC$  the other two functions lie in 
$L^{1}[(0,1), dc]$ for each fixed
value $0< a<1$.  These properties are inherited by $\zeta_{\ast}(s, a, c)$
which establishes the $L^1$-membership part of (ii-a) and (ii-b) for $\zeta_{\ast}(s, a, c)$
when $s \in \CC \smallsetminus \ZZ$. Now we apply the $\hR$ operator,
interchanging $s$ and $1-s$ as necessary, and
deduce that (ii-a) and (ii-b) also hold for the other three functions
in \eqn{202h}. Since these functions span $\sE_s$, the $L^1$-membership properties
hold for all functions in $\sE_s$.  For the Fourier coefficient assertion,
the formula \eqn{101c} gives  a convergent Fourier series 
in the $a$-variable for $\zeta_{\ast}(s, a, c)$ valid for $\Re(s)>0$, and the
continuity of the Fourier coefficients 
$f_n(c) = (n+c)^{-s}$ for $n \ge 0$, $f_n(c) = 0$ for $n < 0$ is manifest.
Similarly we obtain continuity of the Fourier coefficients in the $a$-variable
of $e^{-2\pi i a}\zeta_{\ast}(s, 1-a, 1-c)$ when $\Re(s) >0$. These two functions
span $\sE_{s}$ for $\Re(s) >0$, which establishes (ii-a) for $s \in \CC \smallsetminus \ZZ$.
The Fourier coefficient
assertion (ii-b) is obtained by similar calculations. Multiplication
by $e^{2\pi i a c}$ does not affect membership in $L^1[(0,1), dc]$. 
 One finds for $F(a,c)= \hR(\zeta_{\ast})(1-s, a, c) \in \sE_{s}$
that $e^{2\pi i a c} F(a, c)= \zeta_{\ast}(1-s, 1-c, a)$ has Fourier coefficients 
in the $c$-variable 
$g_n(a) = |-n+a|^{s-1}$ for $n \ge 0$ and $g_n(a) = 0$ for $n <0$,
while $F(a,c)= \hR^3(\zeta_{\ast})(1-s, a, c) \in \sE_{s}$ has
$e^{2\pi i a c} F(a, c)= e^{2 \pi i c}\zeta_{\ast}(1-s, c, 1-a)$, and has Fourier 
coefficients in $c$-variable $g_n(a) = 0$ for $n \ge0$ and
 $g_n(a) = |-n+a|^{s-1}$ for $n <0$. These two functions span  $\sE_{s}$
 for $\Re(s) <1$, which establishes (ii-b) for $s \in \CC \smallsetminus \ZZ.$\smallskip

We next establish the remaining
cases of properties (ii-a) and (ii-b), which are (ii-a) for integer $s \ge 1$ and  (ii-b) for
integer $s \le -1$. For (ii-a) we directly use the Fourier series expansion
\eqn{101c}. After removing the term $c^{-s}$, which is clearly in $L^1[(0,1), da]$,
the remaining Fourier series for fixed $0< c< 1$ is absolutely integrable 
at all integers $s=n \ge 2$, and its Fourier coefficients
are continuous in $c$ by inspection.  A similar property holds for $\zeta_{\ast}(s, 1-a, 1-c)$,
after removing the term $e^{-2\pi i a}(1-c)^{-s}$. Since for $\Re(s)>0$ these functions span
$\sE_s$, property (ii-a) holds in these cases. For
the remaining case  $s=1$ Rohrlich showed (see Milnor \cite[Lemma 4]{Mi83}) that both the 
functions $\zeta_{\ast}(1, a, c) - c^{-1} = \sum_{n=1}^{\infty} \frac{e^{2\pi i na}}{n+c}$
and  $\zeta_{\ast}(1, 1-a, 1-c) - \frac{e^{2\pi i a}}{(1-c)}$
are  in $L^1[(0,1), da]$ with the given Fourier coefficients, completing this case.
The corresponding property (ii-b) for $s=n \le-1$ is established similarly.\smallskip

Property  (ii-c) is shown in  part I \cite[Theorem 2.4]{LL1}.
\end{proof}

%
%

\section{Hecke Eigenfunction Characterization of  Lerch Eigenspaces $\sE_s$}\label{sec6}
\setcounter{equation}{0}

In this section we characterize the Lerch eigenspace
as being the complete set of simultaneous eigenfunctions of the
family of two-variable Hecke operators $\{ \hT_m: m \ge 1\}$ that satisfy
some auxiliary integrability and continuity conditions on the function. This can be viewed
as a generalizing Milnor's characterization of 
space of Hurwitz zeta functions, which we  first explain in \S6.1.
Our characterization theorem is given in \S6.2. We note that 
this characterization theorem does not impose any eigenfunction
condition with respect to the differential operator $D_L$.

%
%

\subsection{Milnor's theorem for Kubert functions}\label{sec61}

A  {\em Kubert operator} $\hT_m$ is an operator that acts formally as
\beql{901}
\hT_m(f)(x) := \frac{1}{m} \sum_{k=0}^{m-1} f \left( \frac{x+k}{m}\right).
\eeq
These operators were studied by Kubert and Lang \cite{KL76}, \cite{KL81}  and Kubert \cite{Ku79},
acting on a group, for example $\RR /\ZZ$.
In 1983 Milnor \cite{Mi83} characterized simultaneous eigenfunction solutions
to the Kubert operator, acting on continuous functions on the open interval $(0,1)$.

Milnor  studied
the family of operators $\hT_m : \rC^0 ((0,1)) \to \rC^0 ((0,1))$ given by \eqn{901}.
These operators form a commuting family of operators on $\rC^0 ((0,1))$.

\begin{theorem}\label{th41}
{\rm (Milnor)} 
Let $\sK_s$ denote the set of continuous functions $f: (0,1) \to \CC$ which
satisfy
\beql{202}
\hT_m f(x) = m^{-s} f(x)  \quad \mbox{for all} \quad
x \in (0,1) ~,
\eeq
for each $m \ge 1$.
Then $\sK_s$ is a two-dimensional
complex vector space and consists of real-analytic
functions. Furthermore
$\sK_s$ is an invariant subspace for the involution
$$\hJ_0 f(x) := f(1-x)$$ 
and decomposes into one-dimensional eigenspaces 
$\sK_s =  \sK_s^+ \oplus  \sK_s^-$ 
which are spanned by  an even
eigenfunction $f_s^+ (x)$ and an odd eigenfunction $f_s^- (x)$, respectively,
 which satisfy
\beql{203}
\hJ_0 f_s^{\pm} (x) = \pm f_s^{\pm}(x) ~.
\eeq
\end{theorem}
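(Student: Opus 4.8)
The plan is to prove the lower bound $\dim \sK_s \ge 2$ by exhibiting two explicit eigenfunctions, the upper bound $\dim \sK_s \le 2$ by Fourier analysis, and then to read off the $\hJ_0$-decomposition and real-analyticity from the explicit description. First I would introduce the periodic zeta functions $F_s(x) := \sum_{n\ge 1} n^{-s} e^{2\pi i n x}$ and $\tilde{F}_s(x) := \sum_{n\ge 1} n^{-s} e^{-2\pi i n x}$. The one-line orthogonality identity $\frac1m\sum_{k=0}^{m-1} e^{2\pi i nk/m} = \mathbf{1}[m \mid n]$ shows that $\hT_m$ sends the character $e^{2\pi i n x}$ to $e^{2\pi i (n/m)x}$ when $m \mid n$ and to $0$ otherwise, whence $\hT_m F_s = m^{-s} F_s$ and $\hT_m \tilde{F}_s = m^{-s}\tilde{F}_s$; so both lie in $\sK_s$, and they are linearly independent because their Fourier supports are disjoint. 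This gives $\dim \sK_s \ge 2$ for every $s$. Since $F_s(x) = \mathrm{Li}_s(e^{2\pi i x})$, each function is real-analytic on $(0,1)$ (the argument $e^{2\pi i x}$ misses the branch cut $[1,\infty)$); for $\Re(s)\le 1$, where convergence is only conditional, I would justify this via the Hurwitz formula expressing $F_s,\tilde{F}_s$ through $\zeta(1-s,x)$ and $\zeta(1-s,1-x)$, or by Abel summation. The involution step is then immediate: $\hJ_0$ commutes with every $\hT_m$ (substitute $k\mapsto m-1-k$ in the defining sum), so it preserves $\sK_s$, and $\hJ_0 F_s = \tilde{F}_s$, so $f_s^{\pm} := F_s \pm \tilde{F}_s$ are $\pm1$-eigenvectors, even resp.\ odd under $x\mapsto 1-x$, yielding $\sK_s = \sK_s^+ \oplus \sK_s^-$ with one-dimensional summands.

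The upper bound is the crux. Given a continuous $f\in\sK_s$, the goal is to compute its Fourier coefficients $c_n = \int_0^1 f(x) e^{-2\pi i n x}\,dx$ and observe that the action of $\hT_m$ on a Fourier series replaces the coefficient of $e^{2\pi i N x}$ by $c_{mN}$, so the eigenfunction relation forces $c_{mN} = m^{-s} c_N$ for all $m\ge1$ and $N\in\ZZ$. Taking $N=\pm1$ and $N=0$ shows $c_n = |n|^{-s} c_{\sgn(n)}$ for $n\ne0$ and $c_0=0$ (unless $s=0$), so the whole sequence is pinned down by $c_1$ and $c_{-1}$. Subtracting $c_1 F_s + c_{-1}\tilde{F}_s$ then leaves a continuous $L^1$ function with all Fourier coefficients zero, hence identically $0$ by Fourier uniqueness, giving $\dim \sK_s \le 2$.

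The main obstacle is that a general $f\in\sK_s$ need not be bounded at the endpoints, so the coefficients $c_n$ are not defined a priori. I would overcome this with an a priori growth estimate extracted from the eigenfunction relation itself: writing the $m=2$ identity as $f(x/2) = 2^{1-s}f(x) - f((x+1)/2)$ and noting that for small $x$ the argument $(x+1)/2$ stays in a compact subinterval (where $f$ is bounded), the quantity $\phi(t):=|f(2^{-t})|$ obeys $\phi(t+1)\le 2^{1-\Re(s)}\phi(t)+O(1)$, which yields $f(x)=O(x^{\Re(s)-1})$ as $x\to0^+$ and, symmetrically, $O((1-x)^{\Re(s)-1})$ as $x\to1^-$. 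For $\Re(s)>0$ this exponent exceeds $-1$, so $f\in L^1(0,1)$ and the Fourier argument above runs verbatim. This integrability input is exactly the Rohrlich-type estimate invoked in the text at $s=1$, and establishing it is the technical heart of the proof.

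Finally I would pass from $\Re(s)>0$ to all $s$ by antidifferentiation. Since $\hT_m$ commutes with $m\frac{d}{dx}$, the map $\frac{d}{dx}$ sends $\sK_{s+1}\to\sK_s$; conversely, given $f\in\sK_s$ the interior antiderivative $g(x)=\int_{1/2}^x f(t)\,dt$ satisfies $(\hT_m g - m^{-(s+1)}g)'=0$, so $\hT_m g = m^{-(s+1)}g + \kappa_m$ for constants $\kappa_m$ obeying the cocycle relation $\kappa_{mm'}=m'^{-(s+1)}\kappa_m+\kappa_{m'}$. For $s\ne-1$ this forces $\kappa_m=(m^{-(s+1)}-1)\gamma$, and then $g+\gamma\in\sK_{s+1}$ with $(g+\gamma)'=f$; hence $\frac{d}{dx}:\sK_{s+1}\to\sK_s$ is onto and $\dim\sK_s\le\dim\sK_{s+1}$. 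Descending one unit strip at a time from the already-treated half-plane $\Re(s)>0$ bounds $\dim\sK_s\le 2$ everywhere, with the finitely many integer parameter values $s=0,-1,-2,\dots$ (where the cocycle argument or the vanishing of $c_0$ needs separate attention) checked directly. Combined with the explicit lower bound, this establishes that $\sK_s$ is exactly two-dimensional, spanned by the real-analytic functions $f_s^{\pm}$, completing the proof; I expect the endpoint growth estimate and the bookkeeping at integer $s$ to be the only genuinely delicate points.
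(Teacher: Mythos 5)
The paper offers no proof of this statement at all: it is quoted as Milnor's theorem and the ``proof'' is the single line ``This is proved in \cite[Theorem 1]{Mi83}.'' Your argument is, in outline, a correct reconstruction of Milnor's own proof: the periodic zeta functions $F_s(x)=\sum_{n\ge 1}n^{-s}e^{2\pi inx}$ and $\tilde F_s=\hJ_0 F_s$ (equivalently, by the Hurwitz formula, the basis $\zeta_{1-s}(x),\zeta_{1-s}(1-x)$ that the paper records in \eqref{205}) give the lower bound; the $m=2$ identity $f(x/2)=2^{1-s}f(x)-f((x+1)/2)$ iterated on dyadic annuli gives the endpoint bound $f(x)=O(x^{\Re(s)-1})$ (with an extra logarithm at $\Re(s)=1$), hence $L^1$-integrability for $\Re(s)>0$; Fourier uniqueness then pins $f$ down by $c_1,c_{-1}$; and differentiation/antidifferentiation moves the bound to the remaining half-plane. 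Two spots in your sketch deserve more care than you give them. First, the linear independence of $F_s,\tilde F_s$ via ``disjoint Fourier supports'' only makes sense where the functions are $L^1$, i.e.\ $\Re(s)>0$; for $\Re(s)\le 0$ you need either the endpoint asymptotics $F_s(x)\sim\Gamma(1-s)(-2\pi ix)^{s-1}$ or to propagate independence through the derivative map. Second, the genuinely exceptional parameter in your descent is $s=-1$ only (not the ``finitely many'' values $0,-1,-2,\dots$, which is in any case an infinite set): there the cocycle relation degenerates to $\kappa_{mm'}=\kappa_m+\kappa_{m'}$, whose solution space (completely additive arithmetic functions) is infinite-dimensional, so surjectivity of $\frac{d}{dx}:\sK_{0}\to\sK_{-1}$ does not follow from the cocycle being a coboundary and $s=-1$ must be handled by a separate direct argument. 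Neither issue is fatal --- both are dealt with in Milnor's paper --- but as written they are gaps in your sketch rather than routine bookkeeping.
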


\begin{proof}
This is proved in \cite[Theorem 1]{Mi83}.
\end{proof}

Milnor gave an explicit  basis for $\sK_s$, which for
$s\neq 0, -1, -2, \ldots$ is given in terms of the 
(analytic continuation in $s$ of the) Hurwitz zeta function
\beql{204}
\zeta_s (x) := \zeta (s,0,x) = \sum_{n=0}^\infty \frac{1}{(n+x)^{s}},
\eeq
namely   
\beql{205}
\sK_s = < \zeta_{1-s} (x), \zeta_{1-s} (1-x) >.
\eeq 
Properties (ii) and (iii) of Theorem \ref{th201} are 
analogous to those in Theorem \ref{th41}, in which the
variable $x$ in the Kubert operator \eqref{901}
is identified with the variable $a$,  and the second variable $c$
is set to $0$.

Milnor observes that 
$\frac{\pt}{\pt x}$ maps $\sK_s$ to $\sK_{s-1}$, acting as
a ``lowering operator''.
Because the individual  operators inside the sum on the right side of \eqn{901}
are contracting, this  ``lowering operator'' suffices
in his proof.\smallskip

In \S3  we observed,  in the two-variable context, that
$\frac{1}{2 \pi i} \frac{\pt}{\pt a} + c : \sE_s \to \sE_{s-1}$ is 
a ``lowering'' operator
while $\frac{\pt}{\pt c} : \sE_s \to \sE_{s+1}$ is a ``raising'' operator.
Property (i) of Theorem \ref{th201} is derived using these properties.
Milnor's theorem {\em formally} corresponds to setting $a=x$ and $c=0$ in 
Theorem \ref{th201},
except that $c=0$ falls outside the domain of definition of the functions 
we consider.

%
%

\subsection{Characterization of  Lerch eigenspaces $\sE_s$}\label{sec62}

Milnor's proof of Theorem~\ref{th41}
used in an essential way  the property  that for   ``Kubert operators'' $\hT_m$ all 
terms on  the right side of
\eqn{901} are contracting operators on the domain $x \in (0,1)$.
In contrast, the  two-variable
Hecke operators  \eqref{102} are expanding  in the $c$-direction.
To deal with the  expanding property we  impose extra analytic conditions on 
the function in the whole plane
$\RR \times \RR$, in order to obtain a characterization   of $\sE_s$ as
being simultaneous eigenfunctions of two-variable Hecke operators.

Our main result shows that 
the twisted-periodicity and integrabilities  properties 
of Theorem~\ref{th31} yield such a characterization.\smallskip


\begin{theorem}~\label{th62}
{\rm (Lerch Eigenspace Characterization)}
Let $s\in \CC$. Suppose that
$F(a, c): (\RR \smallsetminus \ZZ) \times (\RR \smallsetminus \ZZ) \to \CC$ 
is a continuous function  that satisfies the following conditions.
\begin{itemize}
\item[(1)] {\em (Twisted-Periodicity Condition) }
For $(a,c) \in (\RR \smallsetminus \ZZ) \times (\RR \smallsetminus \ZZ)$, 
\begin{eqnarray}
F(a+1, ~c~) & =& ~~~~~~~~F(a,c),   \label{421a}\\
F(~a~, c+1) &=& e^{-2\pi i a} F(a,c) \label{421b}.
\end{eqnarray}

\item[(2)] {\em (Integrability Condition) } At least one of the following two
conditions (2-a) or (2-c) holds.

{\rm (2-a)} The $s$-variable has
$\Re(s) > 0$. 
For   $0<c<1$ each function $f_c(a):= F(a, c) \in L^1[(0,1), da]$, and    
  all  the Fourier coefficients 
 \[
   f_n(c) := \int_{0}^{1} f_c(a)e^{-2\pi i n a} da= \int_{0}^1 F(a,c) e^{-2\pi i n a}da,~~~n \in \ZZ,
   \]
  are continuous functions of $c$.

 {\rm (2-c)}  The $s$-variable has
 $\Re(s) < 1$. For  $0<a<1$ each function $g_a(c):= e^{2 \pi i a c}F(a,c) \in L^1[(0,1), dc]$,  and
 all  the Fourier coefficients 
 \[
   g_n(a) := \int_{0}^{1} g_a(c)e^{-2\pi i n c} dc= \int_{0}^1 e^{2\pi i a c}F(a,c) e^{-2\pi i n c}dc,~~~n \in \ZZ,
   \]
 are continuous functions of $a$.

  \item[(3)] {\em (Hecke Eigenfunction Condition)} For all $m \ge 1$, 
\beql{422a}
\hT_{m}(F)(a, c) = m^{{-s}}F(a, c)
\eeq
holds on the domain 
$\{ (a, c) \in (\RR \smallsetminus \ZZ)\times (\RR \smallsetminus \frac{1}{m}\ZZ)\}$.
     \end{itemize} 
    
\noindent Then  
$F(a, c)$ is the restriction to noninteger $(a, c)$-values
of a function in the Lerch eigenspace $\sE_{s}$.
\end{theorem}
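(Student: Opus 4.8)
The plan is to pass to the one-variable Fourier coefficients of $F$ and show that the twisted-periodicity and Hecke hypotheses collapse into a single scalar functional equation whose solution space is spanned by $|x|^{-s}$ and $\sgn(x)|x|^{-s}$. I would first treat case (2-a), where $\Re(s)>0$, and afterwards reduce case (2-c) to it by applying the $\hR$-operator.

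By \eqref{421a} the function $F(a,c)$ has period $1$ in $a$, so in case (2-a) the hypothesis $f_c(a)\in L^1[(0,1),da]$ lets me write $F(a,c)=\sum_{n\in\ZZ}f_n(c)e^{2\pi ina}$ with $f_n(c)=\int_0^1 F(a,c)e^{-2\pi ina}\,da$. The second relation \eqref{421b} forces the coefficient identity $f_{n+1}(c)=f_n(c+1)$, whence $f_n(c)=f_0(c+n)$ for all $n\in\ZZ$; thus $F$ is determined by the single function $f_0$ on $\RR\smallsetminus\ZZ$, which by (2-a) is continuous on each interval $(n,n+1)$. Feeding the eigenfunction condition \eqref{422a} through the expansion, a short computation (as in the proof of Theorem~\ref{th201}(ii)) shows that the $\ell$-th $a$-Fourier coefficient of $\hT_m(F)$ equals $f_{m\ell}(mc)$, so \eqref{422a} is equivalent to $f_{m\ell}(mc)=m^{-s}f_\ell(c)$ for all $\ell\in\ZZ$, $m\ge1$. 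Using $f_n(c)=f_0(c+n)$ this collapses to the dilation equation
\[
f_0(mx)=m^{-s}f_0(x),\qquad m\ge1,\ x\notin\tfrac1m\ZZ .
\]

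The heart of the proof, and the step I expect to be the main obstacle, is to solve this equation under piecewise continuity. I would set $g(x):=|x|^s f_0(x)$, which then satisfies the multiplicative invariance $g(mx)=g(x)$ and is continuous on $(0,1)$ and on $(-1,0)$. Fixing an irrational $x_0\in(0,1)$, the relations $g(mx_0)=g(x_0)$ and $g(x_0/m)=g(x_0)$ (legitimate since irrationality keeps every argument noninteger) show $g$ is constant on the orbit $\QQ_{>0}\,x_0$, which is dense in $(0,\infty)$; continuity then forces $g\equiv C_+$ on $(0,1)$, and propagating by $g(x)=g(x/N)$ gives $g\equiv C_+$ on all of $(0,\infty)\smallsetminus\ZZ$. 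The same argument on $(-1,0)$ produces a constant $C_-$ on the negative axis. Hence $f_0(x)=A|x|^{-s}+B\,\sgn(x)|x|^{-s}$ with $A=\tfrac12(C_++C_-)$, $B=\tfrac12(C_+-C_-)$. The essential rigidity is that invariance under all integers $m$ is invariance under a dense multiplicative subgroup, which piecewise continuity upgrades to constancy; this is exactly where (2-a) is used. Substituting back, $F(a,c)=\sum_{n}f_0(c+n)e^{2\pi ina}=A\,L^{+}(s,a,c)+B\,L^{-}(s,a,c)$, since these are precisely the $a$-Fourier expansions of the extended functions $L^{\pm}$. As $F$ and $A L^{+}+B L^{-}$ have the same $a$-Fourier coefficients for each noninteger $c$ and both are continuous in $a$, they agree at all noninteger $(a,c)$, so $F$ lies in $\sE_s$.

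Finally, for case (2-c) with $\Re(s)<1$, I would put $G:=\hR^{-1}(F)$. Since $F$ is piecewise continuous and twisted-periodic it represents an element of $\sP^{\ast}(\RR\times\RR)$, so the relations $\hR\hT_m\hR^{-1}=\hS_m$ and $\hS_m=\tfrac1m(\hT_m)^{-1}$ of Theorem~\ref{th2001} apply and give $\hT_m(G)=m^{-(1-s)}G$. Lemma~\ref{lem32} shows $G$ is again twisted-periodic, and the explicit $\hR$-rule converts the $c$-integrability of $e^{2\pi iac}F$ in (2-c) into the $a$-integrability of $G$ demanded by (2-a) for the parameter $1-s$, whose real part is positive. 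The already-proved case (2-a) then yields $G\in\sE_{1-s}$, so $F=\hR(G)\in\hR(\sE_{1-s})=\sE_s$ by Theorem~\ref{th201}(iv). The one point needing careful checking here is precisely this transfer of the integrability and coefficient-continuity hypotheses across $\hR$.
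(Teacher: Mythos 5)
Your proof is correct, and for case (2-a) it is essentially the paper's own argument: expand in the $a$-variable, use twisted-periodicity to get $f_n(c)=f_0(c+n)$, use the Hecke condition to get $f_{m\ell}(mc)=m^{-s}f_\ell(c)$, normalize by $|x|^{s}$ to obtain a dilation-invariant function, and upgrade invariance under the dense group of positive rational dilations to constancy on $(0,\infty)$ and $(-\infty,0)$ via the continuity supplied by (2-a). Where you genuinely diverge is case (2-c). The paper does \emph{not} reduce it to (2-a); it runs a second, parallel Fourier argument directly in the $c$-variable: setting $G(a,c)=e^{2\pi iac}F(a,c)$, expanding in $c$, deriving $g_{mn-k}(a)=m^{s-1}g_n(\frac{a+k}{m})$ from the Hecke condition, and solving the resulting dilation equation for $\tilde g_0(a)=|a|^{1-s}g_0(a)$ by the same density-plus-continuity rigidity, landing directly on the basis $e^{-2\pi iac}L^{\pm}(1-s,1-c,a)$ of $\sE_s$. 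You instead conjugate by $\hR$: since $F$ is continuous off the integer grid and twisted-periodic it lies in $\sP^{\ast}(\RR\times\RR)$, so Lemma \ref{le46} gives $\hS_m(F)=m^{s-1}F$, hence $\hT_m(\hR^{-1}F)=\hR^{-1}\hS_m(F)=m^{-(1-s)}\hR^{-1}F$ (off a measure-zero grid, then everywhere by continuity), and you invoke the already-proved case (2-a) at parameter $1-s$. The transfer of hypotheses that you flag as the delicate point does go through: writing $\hR^{-1}F(a,c)=e^{-2\pi iac+2\pi ic}F(c,1-a)$ and substituting $u=1-a$ shows that the $n$-th $a$-Fourier coefficient of $\hR^{-1}F(\cdot,c)$ is exactly $g_{-n}(c)$ in the notation of (2-c), so the $L^1$-membership and coefficient-continuity demanded by (2-a) for $\hR^{-1}F$ are precisely the hypotheses (2-c) for $F$. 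What the paper's route buys is independence from the operator-algebra results of Section \ref{sec4} (invertibility of $\hT_m$ and $\hS_m=\frac1m(\hT_m)^{-1}$) together with an explicit identification of the limiting basis functions; what your route buys is brevity, no second rigidity argument, and a conceptual explanation of why conditions (2-a) and (2-c) are mirror images of one another under $\hR$.
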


\paragraph{\bf Remarks.} (i)  Theorem~\ref{th31} shows that
all functions in  $\sE_s$  for $\Re(s) >0$  satisfy conditions (1), (2-a) and (3),
and all functions  in  $\sE_s$  for $\Re(s) < 1$  satisfy conditions (1), (2-c) and (3) above.
{ Conditions  (2-a), (2-c) between them cover all $s \in \CC$,
and they  hold simultaneously inside the critical strip $0 < \Re(s) <1$.}

 (ii) The function $F(a, c) := c^{-s}$ satisfies  properties (2-a)  and  (2-c) and also
the eigenvalue property (3). However it  fails to satisfy the twisted-periodicity property (1).  

\begin{proof}
We first treat the case when  condition (2-a) holds,
where $F(a,c)$ is absolutely integrable on horizontal lines in the
unit square.
The proof uses the Fourier series expansion of $F(a, c)$ with respect
to the $a$-variable, which we write as
\beql{423a}
F(a,c) \sim \sum_{n = -\infty}^{\infty}  f_{n}(c)e^{2\pi i n a}, 
\eeq
where $\sim$ means that  no assertion is made about convergence of
the Fourier series to $F(a,c)$.

Condition (2-a) shows that all the Fourier coefficients
\beql{423b}
f_n(c) := \int_{0}^{1} F(a,c) e^{-2\pi i n a} da
\eeq
are continuous functions of $c$ for $0< c<1$. The Fourier coefficients
are also defined for $\ell < c < \ell+1$ for all integer $\ell$, using
the twisted-periodicity property \eqn{421b} in the $c$-variable,
$F(a, c+\ell)= e^{-2\pi i \ell a} F(a, c)$, and they satisfy
\beql{440}
f_n(c+\ell) = f_{n+\ell}(c) ~~\mbox{for~all}~~\ell\in \ZZ
\eeq
by uniqueness of the Fourier series expansion { for $L^1$-functions.}

The action of the two-variable Hecke operator 
\[
\hT_m(F)(a, c) : =\frac{1}{m} \sum_{k=0}^{m-1} F(\frac{a+k}{m}, mc)
\]
is well-defined pointwise for
 $(a,c) \in (\RR \smallsetminus \ZZ)\times(\RR \smallsetminus \frac{1}{m}\ZZ)$.
The resulting function is in $L^1[(0,m), da]$ for $c \in \RR \smallsetminus \frac{1}{m}\ZZ$, and
is periodic of period {$1$  in the $a$-variable because 
$$ \hT_m(F)(a+1, c) = \hT_m(F)(a, c) + \frac{1}{m}\left( F(\frac{a+m}{m}, mc) - F(\frac{a}{m}, mc)\right) = \hT_m(F)(a, c)$$ by (\ref{421a}). Its Fourier expansion on $L^2[(0,1), da]$ is }
\begin{eqnarray}\label{612}
\hT_m(F)(a, c) & = &
 \frac{1}{m} \sum_{k=0}^{m-1} \sum_{n \in \ZZ} f_n(mc) e^{-2\pi i n (\frac{a+k}{m})} \nonumber \\
&=& 
\sum_{n \in \ZZ} \left( \frac{1}{m} \sum_{k=0}^{m-1} e^{-2\pi i n (\frac{a+k}{m})}\right) f_n(mc)\nonumber \\
&=& \sum_ {\ell \in \ZZ} f_{m\ell}(mc) e^{-2 \pi i \ell a}. \label{425a}
\end{eqnarray}

Now suppose that $\hT_m(F)(a, c)= m^{-s} F(a,c)$ on the indicated domain. 
Comparison of \eqn{425a} with the Fourier series expansion of $m^{-s} F(a,c)$
in \eqn{423a} gives
\beql{441}
f_{mn}(mc) = m^{-s} f_{n}(c)~~~\mbox{for}~~~ c \in \RR \smallsetminus \frac{1}{m}\ZZ.
\eeq
To simplify later formulas, we set
\beql{442}
\tilde{f}_n(c) := |n+c|^{s} f_n(c).
\eeq
Then \eqn{440} and \eqn{442} yield, for all $l \in \ZZ$, 
\beql{443}
\tilde{f}_n(c + \ell) = f_n(c+\ell)|n+c + \ell |^{s} = f_{n+\ell}(c)|n+c+\ell|^{s} = \tilde{f}_{n+\ell}(c).
\eeq
Furthermore \eqn{441} gives
\begin{eqnarray}
\tilde{f}_{mn}(mc) &=& f_{mn}(mc)|mn+mc|^{s}\nonumber \\
&=& (m^{-s} f_n(c))m^s |n+c|^{s} \nonumber \\
&=& \tilde{f}_n(c) .  \label{444}
\end{eqnarray}
We now determine all solutions to \eqn{443} and \eqn{444}.
Consider $n=0$, and we obtain, for all $m \ge 1$,
\beql{445}
\tilde{f}_0(mc)= \tilde{f}_0(c). 
\eeq
The right side of \eqn{445} is continuous for $c \in (0,1)$ which implies that the left side 
makes sense as a  continuous function 
for $mc \in (0, m)$.  Since $m$ is arbitarily large, we conclude that $\tilde{f}_0(c)$ 
extends to a continuous function on $(0, \infty)$.
Now \eqn{440} gives
\[ 
f_{0}(c-1) = f_{-1}(c) ~~\mbox{for}~~0< c< 1,
\]
and condition (2-a) gives the continuity of $f_{-1}(c)$ on this interval, so it
follows that  $\tilde{f}_0(c)$ is continuous
on $(-1,0)$.  As above \eqn{445} implies that $\tilde{f}_0(c)$ extends to a continuous
function on $(-\infty, 0)$.
Thus any possible discontinuity  of $\tilde{f}_0(c)$ is at $c=0$. 
Now {writing  $\tilde{c}= \frac{m_1}{m_2} c$ for positive integers $m_1, m_2$ and applying \eqn{445},} 
we  obtain for positive $\tilde{c}$ that
\[
\tilde{f}_0(\tilde{c})= \tilde{f}_0(m_2 \tilde{c})= \tilde{f}_0(m_1c)= \tilde{f}_0 (c)
\]
and similarly for negative $\tilde{c}$. Thus $\tilde{f}_0(c)= \tilde{f}_0(rc)$ for
all positive rational numbers $r$, which with the continuity results implies that
$\tilde{f}_0(c)$ is constant on $(-\infty, 0)$ and on $(0, \infty)$, say
$\tilde{f}_0(c) = A$ (resp. $B)$ on $(0, \infty)$ (resp. $(-\infty, 0)$).
Thus  we obtain
\[
f_0(c) = \left\{
\begin{array}{cl}
A|c|^{-s}  & \mbox{if} ~~c>0 \\
~~~ \\
B|c|^{-s}  & \mbox{if}~~ c<0~.
\end{array}
\right.
\]
Now \eqn{440} gives
\[
f_n(c) = f_0(c+n) =\left\{
\begin{array}{cl}
A|c+n|^{-s} & \mbox{if} ~~c>-n, \\
~~~ \\
B|c+n|^{-s} & \mbox{if}~~ c < -n~.
\end{array}
\right.
 \]
Thus the Fourier series of $F(a,c)$ agrees term-by-term with the Fourier series of 
\beql{447}
H(a,c): = \frac{1}{2}({A}+{B}) L^{+}(s, a, c) +
   \frac{1}{2}({A}-{B}) L^{-}(s, a, c).
\eeq
Since we have $\Re(s)>0$, this is in $L^1[(0,1), da]$ for noninteger $c$. 
So by uniqueness of Fourier series and continuity
we  conclude that $F(a,c)= H(a,c) \in \sE_s$ everywhere on
$(\RR \smallsetminus \ZZ) \times (\RR \smallsetminus \ZZ)$.

We now treat the case that condition (2-c) holds. The proof is similar in
spirit. We set
\[ 
G(a, c) := e^{2\pi i a c} F(a,c) ~~\mbox{for}~~(a,c) \in (\RR \smallsetminus \ZZ) \times (\RR \smallsetminus \ZZ).
\]
This function satisfies the modified twisted-periodicity conditions. 
\begin{eqnarray}
G(a+1, ~c~) & =& e^{2\pi i c}G(a,c)   \label{451a}\\
G(a, ~c+1) &=& ~~~~~~~~ G(a,c) \label{451b}.
\end{eqnarray}
Condition (2-c) guarantees that 
$G(a,c) \in L^1[(0,1), dc]$ for non-integer values of $a$,
so that it  has a Fourier expansion in
the $c$-variable: 
\[
G(a, c) \sim \sum_{n \in \ZZ}   g_n(a) e^{2\pi i n c},
\]
and condition (2-c) asserts that  the $g_n(a)$ are continuous functions of $a$,
for $0< a< 1$.  The twisted-periodicity condition \eqn{451a}
now implies that the Fourier coefficient functions 
$g_n(a)$ are defined for all  $a \in \RR \smallsetminus \ZZ$, and satisfy
\beql{453}
g_n(a + \ell) = g_{n -\ell}(a).
\eeq
By hypothesis
\[
e^{2\pi i a c} \hT_m(F)(a, c)  =   e^{2\pi i a c}(m^{-s} F(a, c)) = m^{-s}G(a, c)
\]
holds on $(\RR \smallsetminus \ZZ) \times (\RR \smallsetminus \frac{1}{m}\ZZ)$.
Thus we have the Fourier series
\beql{454}
e^{2\pi i a c} \hT_m(F)(a, c) \sim \sum_{n \in \ZZ}  m^{-s}g_n(a)e^{2 \pi i nc}.
\eeq
We evaluate the left side by expanding the Hecke operator, and note that
\[
G_k(a,c) :=     e^{2 \pi i a c} F(\frac{a+k}{m}, mc) = e^{-2\pi i kc}{G(\frac{a+k}{m}, mc)}
\] 
satisfies the twisted-periodicity conditions
\begin{eqnarray}
G_k(a+m, ~c~) & =& e^{2\pi i mc}G_{k}(a,c)   \label{455a}\\
G_k(a, ~c+1) &=& ~~~~~~~~ G_{k}(a,c) \label{455b}.
\end{eqnarray}
Condition (2-c) allows us to deduce that this function is in
$L^1[(0,1), dc]$, so it has a Fourier series expansion in the $c$-variable, which is
\[
G_k(a,c) \sim e^{-2 \pi i kc} \sum_{n \in \ZZ} g_n(\frac{a+k}{m}) e^{2\pi i n(mc)}.
\]
Summing up over $0 \le k \le m-1$ we obtain 
\begin{eqnarray*}
e^{2 \pi i  a c} \hT_m(F)(a,c)  \sim 
\frac{1}{m} \sum_{k=0}^{m-1}
\left(\sum_{n \in \ZZ} g_n(\frac{a+k}{m}) e^{2 \pi i (mn -k)c}\right).  
\end{eqnarray*}
By uniqueness of Fourier series of $L^1$-functions, we obtain 
  $m^{-s} g_{mn-k}(a) = \frac{1}{m}g_n(\frac{a+k}{m})$,
which we rewrite as
\beql{456}
g_{mn-k}(a) = m^{s-1} g_n(\frac{a+k}{m}),
\eeq
which is valid for $\frac{a+k}{m} \in \RR \smallsetminus \ZZ.$
We now set
\[
\tilde{g}_n(a) := g_n(a) |a-n|^{1-s},
\]
and note that
\[
\tilde{g}_{mn-k}(a)   =   g_{mn-k}(a) |a-mn+k|^{1-s} = m^{s-1}g_n(\frac{a+k}{m})|a-mn+k|^{1-s}
 = \tilde{g}_n(\frac{a+k}{m}).
\]
On choosing $n=k=0$ we obtain for all $m \ge 1$ that
\beql{457}
\tilde{g}_0(a) = \tilde{g}_0(\frac{a}{m})
\eeq
is valid for $\frac{a}{m} \in \RR \smallsetminus \ZZ.$
Now $g_0(a)$ is continuous on $(0,1)$, so the left side of this equation implies
that it extends to a continuous function on $(0, m)$ for all $m$, hence on $(0, \infty)$.
Now $g_0(a-1) = g_{1}(a)$ shows by condition (2-c) that $g_0(a)$ is continuous
on $(-1,0)$, and \eqn{457} for all $m$ implies that $\tilde{g}(a)$ continuously extends to
$(-\infty, 0).$ Since $\tilde{g}_0(\frac{a}{m_1})= \tilde{g}_0(m_2)$ for all positive
integers $m_1, m_2$, we conclude $\tilde{g}_0(a) = \tilde{g}_0(ra)$ for all positive rational
$r$, which with the continuity conditions forces $\tilde{g}_0(a)$ to be constant on
$(-\infty, 0)$ and on $(0, \infty)$, say $\tilde{g}_0(a) = A$ on $(0, \infty)$, resp.
$B$ on $(-\infty, 0)$. We deduce that
\[
g_0(a) = \left\{
\begin{array}{cl}
A|a|^{s-1}  & \mbox{if} ~~a>0, \\
~~~ \\
B|a|^{s-1}  & \mbox{if}~~ a<0~.
\end{array}
\right.
\]
Now \eqn{453} gives
\[
g_n(a) = g_0(a-n) =\left\{
\begin{array}{cl}
A|a-n|^{s-1} & \mbox{if} ~~a>n ,\\
~~~ \\
B|a-n|^{s-1} & \mbox{if}~~ a < n~.
\end{array}
\right.
 \]
Thus the Fourier series of $G(a,c)$ agrees term-by-term with the Fourier series of 
\beql{459} 
H(a,c): =\frac{1}{2} ({A}+{B}) e^{-2 \pi i ac}L^{+}( 1-s, 1-c, a) +
   \frac{1}{2}({A}-{B}) e^{-2\pi i a c}L^{-}(1-s, 1-c, a).
\eeq

\noindent Since $\Re(s)<1$  this function is in $L^1[(0,1), dc]$ for
non-integer $a$, and we may conclude by continuity that $G(a, c) = H(a,c) \in \sE_s$,
for $(a,c) \in ( \RR \smallsetminus \ZZ) \times(\RR \smallsetminus \ZZ).$
This completes the proof.
\end{proof}

%
%
\section{Concluding Remarks }\label{sec7}

This paper studied
two-variable Hecke operators  $\hT_m$ given by \eqref{102}
on spaces of functions of two  real variables. 
It  showed  that the Lerch zeta function $\zeta(s, a, c)$ is a simultaneous
eigenfunction of all $\hT_m$ with eigenvalue $m^{-s}$ for all $s \in \CC$. 
As mentioned in Section \ref{sec12}, 
we may formally define a {\em zeta operator} by 
 $$
 \hZ : = \sum_{m=1}^{\infty}\, {\hT}_m. 
 $$
 For a fixed complex value $\Re(s) >1$ we can make sense of
 this operator and observe that it has the Lerch zeta function as an eigenfunction and 
  the Riemann zeta value  $\zeta(s)$ as an eigenvalue; that is,
 $$
 \hZ(\zeta)(s, a, c) = \zeta(s) \zeta(s, a, c).
 $$
 In this paper we have extended the action of the individual operators $\hT_m$ to
arbitrary complex values of $s$ on suitable function spaces.
In particular one can define the individual $\hT_m$ in the Hilbert  space
$L^2( \Bx, da \, dc)$ when $0 < \Re(s)<1$. 
In this context, one may ask whether the other  structures attached to the Lerch
zeta function  in these four papers can 
yield insight into  the Riemann hypothesis. 

Concerning the two-variable Hecke operators themselves and the Riemann hypothesis
we  make the following observations.

\begin{itemize}
 \item[(1)]
 At $a=0$ the  operator $\hT_m$ degenerates  to the  dilation
  operator $\tilde{\hT}_m (f) (c) = f(mc)$. 
  In 1999 B\'{a}ez-Duarte \cite{BD99} noted that this  family of dilation operators relates to
  the real-variables approach to the Riemann hypothesis due to 
  Nyman  \cite{Nym50} and Beurling \cite{Beu55},  see also B\'{a}ez-Duarte \cite{BD03}, 
 Burnol \cite{Bur04}, \cite{Bur04a} and Bagchi \cite{Bag06}.  Therefore one may ask whether
 there is a Riemann hypothesis criterion directly formulable in terms of the
 two-variable Hecke operators $\hT_m$.
  
  \item[(2)]
  The  Lerch zeta function at $a=0$ reduces for $\Re(s)>1$ 
  to the Hurwitz zeta function. The Hurwitz zeta function  inherits the 
   discontinuities of the Lerch zeta function at integer values of $c$.
  Milnor \cite[p. 281]{Mi83} noted that
  at the  value $s=1$ the space $\sK_s$ includes on $(0,1)$ the odd function
  $ c - \frac{1}{2} $ which, due to  
   the discontinuities, extends to  the periodic function  $\beta_1(c):= \{ c- \frac{1}{2}\}$,
  the first Bernoulli polynomial,  a fractional part function.
 The fractional part function appears in the various  
real-variables forms of the Riemann hypothesis  above.
 The  discontinuities of the Lerch zeta function at integer values of $a$ or $c$
 (for some values of $s$)  
 represents an important feature of these functions, worthy of further
 study in this context.
\end{itemize}

An interesting connection to the Riemann hypothesis relates to  the
differential operator $\hD_L=\hD_L^{+} \hD_L^{-}$ considered here (and in \cite{LL2}, \cite{LL3})
for which the Lerch zeta function is an
eigenfunction. 

\begin{itemize}
\item[(1)]
 It is natural to consider the {\em symmetrized Lerch differential operator}
\beql{811}
\Delta_L := \frac{1}{2} \left( D_L^{+}D_L^{-} + D_L^{-} D_L^{+}\right) =
  \frac{1}{2 \pi i} \frac{\partial}{\partial a}\frac{\partial}{\partial c} + 
  c \frac{\partial}{\partial c} + \frac{1}{2} {\bf I}.
 \eeq
 This operator has many features of a Laplacian
 operator. It is formally skew-symmetric, and satisfies
$$
( \Delta_L\zeta)(s, a, c) = -(s- \frac{1}{2}) \zeta(s,a , c),
$$
so that the line of skew-symmetry is the critical line $Re(s) = \frac{1}{2}$.
This operator has the ``$xp$'' form suggested by Berry and Keating \cite{BK99}, \cite{BK99b}, 
 as the appropriate  form for  a ``Hilbert-Polya'' operator encoding the zeta zeros as
 eigenvalues.  
 
 \item[(2)]
 The operator $\Delta_L$ commutes with all the $\hT_m$ on
 the two-dimensional Lerch eigenspace $\sE_s$, which is however not contained in $L^2(\Bx, da \, dc)$.
 It  formally commutes with the $\hT_m$, but its commutativity depends on
 the specified domain of the unbounded operator $\Delta_L$, viewed inside $L^2(\Bx, da \, dc)$.
 Such a domain is specified in \cite[Sect. 9.2]{Lag15}, for which the resulting operator $\Delta_L$ has
 purely continuous spectrum.
 
 \item[(3)]
In order to view $\Delta_L$ as  a suitable Hilbert-Polya
operator for zeta zeros along these lines, it may be that one must instead  find a 
scattering on $\sH$ and a small closed subspace
of $\sH$ carrying the  operator $\Delta_L$. Related viewpoints on
Hilbert-Polya operators have been proposed by Connes \cite{Co99a}, \cite{Co99b},
 Burnol \cite{Bur01}  and the first author \cite{La06}.
 \end{itemize}

   \paragraph{\bf Acknowledgments.} 
    The authors thank Paul Federbush for helpful remarks regarding
Fourier expansions in Theorem~\ref{th62}. {  The authors thank the two reviewers
for many helpful comments and corrections. In particular we thank one of them for 
the observation \eqref{unitary} which strengthened Theorem 2.1 (3).}
 This project  was initiated   at
AT\&T Labs-Research when the first author worked there and the second author consulted there;
they thank AT\&T for support. The first author  received
support from the Mathematics Research Center at Stanford University
in 2009-2010. The second author received support from the National Center for
Theoretical Sciences and National Tsing Hua University in Taiwan in 2009-2014.
To these institutions the authors express their gratitude. 

%
%


\end{document}